\newtheorem{theorem}{Theorem}
\newtheorem*{theorem*}{Theorem}
\newtheorem{lemma}{Lemma}[section]
\newtheorem*{lemma*}{Lemma}
\newtheorem{corollary}[lemma]{Corollary}
\newtheorem*{corollary*}{Corollary}
\newtheorem{proposition}[lemma]{Proposition}
\newtheorem*{proposition*}{Proposition}
\newtheorem*{example*}{Example}
\newtheorem{question}{Question}
\newtheorem*{question*}{Question}
\newcommand{\fD}{\mathfrak{D}}
\newcommand{\nat}{\mathbb{N}}
\newcommand{\sR}{\mathscr{R}}
\newcommand{\sP}{\mathscr{P}}
\newcommand{\fL}{\mathfrak{L}}
\newcommand{\cC}{\mathcal{C}}
\newcommand{\cF}{\mathcal{F}}
\newcommand{\PA}{\mathsf{PA}}
\newcommand{\EA}{\mathsf{EA}}
\newcommand{\PRA}{\mathsf{PRA}}
\newcommand{\ISi}{\mathsf{I}\Sigma}
\newcommand{\Rfn}{\mathsf{Rfn}}
\newcommand{\RFN}{\mathsf{RFN}}
\newcommand{\Prf}{\mathsf{Prf}}
\newcommand{\ul}{\ulcorner}
\newcommand{\ur}{\urcorner}
\newcommand{\la}{\langle}
\newcommand{\ra}{\rangle}
\newcommand{\eqv}{\leftrightarrow}
\newcommand{\dom}{\mathrm{dom}}
\newcommand{\gn}[1]{\ulcorner #1 \urcorner}
\renewcommand{\phi}{\varphi}
\renewcommand{\sharp}{\mathrm{\#}}
\newcommand{\num}{\underline}
\title{On Shavrukov's non-isomorphism theorem for diagonalizable algebras}
\author{Evgeny A.\,Kolmakov\\
Steklov Mathematical Institute of Russian Academy of Sciences\\ Moscow, Russia\\

}
\begin{document}

\maketitle

\begin{abstract}
	We prove a strengthened version of V.\,Yu.\,Shavrukov's result on the non-isomorphism of diagonalizable algebras of two $\Sigma_1$-sound theories, based on the improvements previously found by G.\,Adamsson. We then obtain several corollaries to the strengthened result by applying it to various pairs of theories and obtain new non-isomorphism examples. In particular, we show that there is epimorphism from $(\fL_T, \Box_T\Box_T)$ onto $(\fL_T, \Box_T)$. The case of bimodal diagonalizable algebras is also considered. We give several examples of pairs of theories with isomorphic diagonalizable algebras but non-isomorphic bimodal diagonalizable algebras.
\end{abstract}

\section{Introduction}

The equational class of diagonalizable algebras (also known as Magari algebras) was introduced by Magari \cite{Mag75} to study the notion of provability in a formal theory within a general algebraic setting. A \emph{diagonalizable algebra} is a boolean algebra $(\mathcal{B}, \Box)$ together with a unary operator $\Box$ satisfying the following identities:
\begin{enumerate}
	\item $\Box(a \to b) \to (\Box a \to \Box b) = \top$,
	\item $\Box(\Box a \to a) \to \Box a = \top$, 
	\item $\Box\top = \top$.
\end{enumerate}
Given a formal theory $T$ (containing enough arithmetic to be able to reason about its own syntax and, in particular, provability) one can associate with it a specific diagonalizable algebra $\fD_T$ called the \emph{diagonalizable algebra of} $T$ (the term ``\emph{provability algebra}'' is also widely used for such algebras, but we stick with the former following Shavrukov). The construction is as follows. Let $\fL_T$ be the \emph{Lindenbaum boolean algebra of} $T$. The universe of $\fL_T$ is the set of all equivalence classes $[\phi]_{\sim_T}$ of $T$-sentences, where the equivalence relation $\phi \sim_T \psi$ is a $T$-provable equivalence, i.e., $\phi \sim_T \psi$ if and only if $T \vdash \phi \eqv \psi$. The boolean operations on the equivalence classes are induced  by propositional connectives. The algebra $\fD_T$ is then defined as $(\fL_T, \Box_T)$, where $\Box_T$ is a unary operator induced by the \emph{provability predicate $\mathsf{Pr}_T(x)$ of} $T$, a formula in the language of $T$ which expresses the fact that a formula with g{\"o}delnumber $x$ is provable in $T$, as follows $\Box_T[\phi]_{\sim_T} = [\mathsf{Pr}_T(\gn{\phi})]_{\sim_T}$. The formula $\mathsf{Pr}_T(x)$ should express provability in $T$ in such a way that $\fD_T$ would indeed satisfy the identities above. This is true, e.g., in case when $\mathsf{Pr}_T(x)$ is the \emph{standard provability predicate for} $T$, i.e., has the form $\exists p\, \Prf_T(p, x)$, where $\Prf_T(p, x)$ is the \emph{standard proof predicate for} (a fixed axiomatization of) $T$, expressing in a natural way that $p$ codes a Hilbert-style proof of a formula with g{\"o}delnumber $x$ from the axioms of $T$. Note that the identities 1, 2 and 3 above then correspond to provable closure of the set of theorems of $T$ under modus ponens, formalized L{\"o}b's theorem and the fact that $T$ can prove the provability of its theorems.
One may also consider the operators induced by the non-standard provability predicates as long as the obtained algebra $\fD_T$ satisfies the identities defining diagonalizable algebras. The above discussion implies that the notation $\fD_T$ is ambiguous, since $\fD_T$ depends on the choice of a formula $\mathsf{Pr}_T(x)$, which itself may depend on a particular axiomatization of~$T$. Nevertheless, we use this notation always assuming that the axiomatization and a formula $\mathsf{Pr}_T(x)$ for a theory $T$ in question are fixed.

The formal study of provability as an operator (or a modality) is traditionally conducted within the field, known as provability logic.
The diagonalizable algebras of formal theories provide a different way of looking at this field.
One of the central results in this area is the famous Solovay arithmetical completeness theorem \cite{Sol76}, which can be reformulated in the algebraic setting as follows: for each $\Sigma_1$-sound theory $T$ the equational theory (or rather the logic) of $\fD_T$ coincides with propositional modal logic $\mathsf{GL}$, called the G{\"o}del-L{\"o}b logic. The uniform version of this theorem is translated into the existence of an embedding of the countably generated free diagonalizable algebra into $\fD_T$.

Several of the most prominent results on diagonalizable algebras of formal theories were obtained by V.\,Yu.\,Shavrukov (see \cite{Shav93a, Shav93b, Shav97a, Shav97b}). Among them the nonarithmeticity of the full first order theory $\mathsf{Th}(\fD_T)$ of the diagonalizable algebra of a $\Sigma_1$-sound theory $T$ (and the first order theory of any collection of diagonalizable algebras of such theories) and the undecidability of its $\forall^* \exists^* \forall^* \exists^*$-fragment, a complete description of r.e.\,subalgebras of $\fD_T$ (later D.~Zambella extended this result to arbitrary subalgebras), the existence of non-trivial automorphisms of~$\fD_T$. 
Shavrukov have also obtained a couple of important results on isomorphisms of the diagonalizable algebras of formal theories (see \cite{Shav93a, Shav97a}). It is these results, which we formulate below, that we focus on in the present work. For the precise definitions
of the notions and notation used in the statements see the next section.

The first one is a general sufficient condition for the diagonalizable algebras of two $\Sigma_1$-sound theories to be not elementarily equivalent, whence also not isomorphic (see \cite{Shav93a} and \cite[Theorem 2.11]{Shav97b}). In particular, it implies that $\fD_\PA$ and $\fD_{\mathsf{ZF}}$ are not isomorphic. 

\begin{theorem}\label{th:shavnoniso}
	If $T$ proves the following version of the uniform $\Sigma_1$-reflection for $S$ 
	$$
	\forall x\, \exists y\, \forall \delta(\cdot) \in \Delta_0 
	\left( \exists p \leqslant x\, \Prf_S(p, \gn{\exists z\, \delta(z)}) \to \exists u \leqslant y\, \mathsf{Tr}_{\Delta_0}(\gn{\delta(\num{u})}),
	\right),
	$$ then $\fD_T \not \cong \fD_S$. Moreover, in this case $\fD_T$ and $\fD_S$ are not elementarily equivalent. 
\end{theorem}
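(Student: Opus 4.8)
The plan is to establish the stronger conclusion directly: I would exhibit a single first-order sentence $\Phi$ in the language of diagonalizable algebras (the boolean operations together with $\Box$) with $\fD_T \models \Phi$ while $\fD_S \not\models \Phi$. Since elementary equivalence is preserved under isomorphism, this simultaneously yields non-elementary-equivalence and hence non-isomorphism. The backbone of $\Phi$ will be the definable class of \emph{$\Sigma$-elements}, i.e.\ those $a$ satisfying $a \le \Box a$ (equivalently $a \to \Box a = \top$). By provable $\Sigma_1$-completeness every class with a $\Sigma_1$ representative is a $\Sigma$-element, and $\Box$ maps every element into this class; thus the $\Sigma$-elements, their induced order, and the restriction of $\Box$ to them form a first-order definable substructure mirroring the $\Sigma_1$-provability behaviour of the underlying theory.

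On this backbone I would let $\Phi$ assert the existence of a configuration of $\Sigma$-elements whose $\Box$-pattern encodes the availability of bounded uniform $\Sigma_1$-reflection. Concretely, the proof-length-indexed reflection in the hypothesis (call it $R_S$) provides, for each bound on proofs, a recursive bound on witnesses, and I would translate this into the existence of a uniformly presented family of $\Sigma$-elements realizing a prescribed $\mathsf{GL}$-consistent pattern of boxes and order relations. The realization inside $\fD_T$ is then carried out through Solovay's arithmetical completeness theorem in its uniform form, namely the embedding of the countably generated free diagonalizable algebra into $\fD_T$ quoted above: because $T \vdash R_S$, the arithmetical sentences encoding the configuration are $T$-provably coherent, so the required elements genuinely exist and $\fD_T \models \Phi$.

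For the failure in $\fD_S$ I would argue that any such configuration of $\Sigma$-elements inside $\fD_S$ would, by reading off its definable $\Box$-pattern, deliver within $S$ a form of uniform $\Sigma_1$-reflection (or at least consistency) for $S$ itself; identity $2$ above, the algebraic form of L\"ob's theorem, together with G\"odel's second incompleteness theorem, then forbids this, so $\fD_S \not\models \Phi$. The main obstacle is precisely the design of $\Phi$ in the second step: the quantitative content of $R_S$, namely the $\forall x\,\exists y$ dependence of witness bounds on proof length, must be rendered as a purely order-theoretic and modal condition on a uniformly definable family of elements, with no direct access to numbers or proof codes. Making this encoding tight enough that it is realizable exactly when the reflection is available, while remaining obstructed by L\"ob's theorem on the $\fD_S$ side, is the delicate point, and it is here that Adamsson's refinement of Shavrukov's original configuration enters.
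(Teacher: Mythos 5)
Your plan has a fatal obstruction at the step ``$\fD_S \not\models \Phi$''. As you set it up, $\Phi$ asserts the existence of a family of $\Sigma$-elements realizing a prescribed $\mathsf{GL}$-consistent pattern of boxes and order relations. But by the uniform Solovay theorem — the very embedding of the countably generated free diagonalizable algebra that you invoke on the $T$-side — every such $\mathsf{GL}$-consistent configuration is realizable in $\fD_S$ as well, since $S$ is also $\Sigma_1$-sound. L\"ob's theorem and G\"odel's second incompleteness theorem hold identically for $T$ and $S$ and so cannot be the mechanism that separates the two algebras; indeed both algebras have the same equational theory $\mathsf{GL}$, and your suggestion that a configuration in $\fD_S$ would ``deliver within $S$ a form of uniform $\Sigma_1$-reflection for $S$ itself'' cannot be made to work, because the distinguishing property is not of the form ``the theory proves its own reflection'' (which fails on both sides). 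The asymmetry introduced by the hypothesis $T \vdash R_S$ is quantitative, not modal, and you never actually produce the encoding of the $\forall x\,\exists y$ witness-bound dependence into order-theoretic data — you explicitly defer exactly this, which is the entire content of the theorem.

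What the actual argument (recalled in Section 4 of the paper and carried out in \cite{Shav93a, Shav97b}) does instead is recursion-theoretic: it turns $\fD_T$ into a model of computation, with numerals $\sharp^n_T = \Box^{(n+1)}_T\bot \land \neg\Box^{(n)}_T\bot$, programs given by sentences $\gamma$, the partial function $\delta^T_\gamma$ determined by which of $\sharp^n_T \to \gamma$ or $\sharp^n_T \to \neg\gamma$ is provable, and complexity $\Delta^T_\gamma$ measured by the least proof (Lemma \ref{lm:index}). The hypothesis $T \vdash R_S$ is used to place $\sR_S$ in $\cF(T)$, i.e.\ to bound witnesses of $S$-provable $\Sigma_1$-sentences by a $T$-provably total function, and the contradiction with an isomorphism comes from Blum's compression theorem (Lemmas \ref{lm:BlumPhi}, \ref{lm:BlumX}): one exhibits a $\{0,1\}$-valued partial recursive function that no $\delta^S$-index can compute as fast as the isomorphism would force. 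The failure of elementary equivalence then requires translating this recursion-theoretic assertion into the first-order language of $\fD_T$ via Shavrukov's interpretation of $\nat$ inside the algebra, which is far more delicate than the definable class $\{a : a \leqslant \Box a\}$ (and note that $a \leqslant \Box a$ does not in general force $a$ to have a $\Sigma_1$ representative, so even your backbone substructure does not cleanly mirror $\Sigma_1$-provability). Finally, Adamsson's refinement concerns the weakening of the growth-rate hypothesis and the passage from isomorphisms to epimorphisms, not the first-order encoding, so it does not supply the missing step you point to.
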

This may be contrasted with a theorem of Pour-El and Kripke \cite{KriPou67} stating that the Lindenbaum algebras $\fL_T$ and $\fL_S$ are recursively isomorphic (for all reasonable theories $T$ and~$S$).

The second important result of Shavrukov is a sufficient condition for the (recursive) isomorphism of $\fD_T$ and $\fD_S$, from which it follows that, e.g., $\fD_{\mathsf{ZF}} \cong \fD_{\mathsf{GB}}$ (see \cite{Shav97a}). Here we formulate it in a slightly weakened form, which is sufficient for all applications. Given a class of formulas~$\Gamma$, 
two theories $T$ and $S$ are called \emph{$\Gamma$-coherent} if
\begin{itemize}
	\item[$(i)$] $T \vdash \gamma \Longleftrightarrow S \vdash \gamma$ for each sentence $\gamma \in \Gamma$,
	
	\item[$(ii)$] both $T$ and $S$ prove the following ``uniform'' formalization of point $(i)$,
	\begin{align*}
	\forall x\, \exists y\, \forall \gamma \in \Gamma &\Bigl(
	\left(\exists p \leqslant x\, \Prf_T(p, \gn{\gamma}) \to 
	\exists q \leqslant y\, \Prf_S(q, \gn{\gamma}) \right) \land \Bigr.\\
	& \quad \Bigl. \left(\exists p \leqslant x\, \Prf_S(p, \gn{\gamma}) \to 
	\exists q \leqslant y\, \Prf_T(q, \gn{\gamma}) \right)
	\Bigr).
	\end{align*}
\end{itemize}

\begin{theorem}\label{th:shaviso}	
	If $T$ and $S$ are $\mathcal{B}(\Sigma_1)$-coherent theories, then 
	$\fD_T \cong \fD_S$.
\end{theorem}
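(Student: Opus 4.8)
The plan is to build, effectively, an isomorphism $F\colon \fD_T \to \fD_S$ by an infinite back-and-forth procedure, in the spirit of the Pour--El--Kripke recursive isomorphism of the underlying Lindenbaum algebras but arranged so that it additionally commutes with the box. The starting observation is that $\fL_T$ and $\fL_S$ are countable atomless Boolean algebras, so at the purely Boolean level a recursive isomorphism is automatic; the entire content lies in securing $F(\Box_T[\phi]_{\sim_T}) = \Box_S F([\phi]_{\sim_T})$. I would enumerate all sentences $\phi_0,\phi_1,\dots$ in the language of $T$ and construct, in stages, matching sentences $\phi_i^{\ast}$ in the language of $S$, maintaining a finite partial map $[\phi_i]_{\sim_T}\mapsto[\phi_i^{\ast}]_{\sim_S}$ that preserves the Boolean operations together with all box-relationships expressible among the generators chosen so far.

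The crucial reduction is to understand what ``commuting with the box'' actually demands. Since $\Box_T[\phi]_{\sim_T}=[\mathsf{Pr}_T(\gn{\phi})]_{\sim_T}$ and $\mathsf{Pr}_T(\gn{\phi})$ is itself a sentence appearing somewhere in the enumeration, the requirement $F(\Box_T a)=\Box_S F(a)$ becomes a system of provability constraints linking the images of different sentences: whenever $\mathsf{Pr}_T(\gn{\phi_i})$ is $\sim_T$-equal to a Boolean combination of earlier generators, the corresponding Boolean combination of the $\phi_j^{\ast}$ must be $S$-provably equivalent to $\mathsf{Pr}_S(\gn{\phi_i^{\ast}})$. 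What makes these constraints tractable is that every value of $\Box_T$ is a $\Sigma_1$ sentence, so the data that must be transported between the two sides is, after the box is applied, data about provability — and by clause $(i)$ of $\mathcal{B}(\Sigma_1)$-coherence the two theories agree on precisely such data. I would therefore maintain as the running invariant a finite ``provability diagram'': the Boolean type of the current generators together with the $\sim$-values of the boxes of all relevant Boolean combinations, required to match on both sides.

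For the extension step, given the current finite state and a sentence to be adjoined (to the domain in a ``forth'' move, symmetrically to the range in a ``back'' move), I would construct its image by a Solovay-style self-referential definition, specifying the image sentence through a simultaneous arithmetical fixed point so that the $\Box_S$-constraints forced by the target provability diagram hold by construction. Clause $(ii)$ — the uniform, proof-length-bounded form of the coherence, furnishing a Skolem function $x\mapsto y$ — is then invoked to verify that the Boolean and provability relationships genuinely transfer: a $T$-proof of a relevant $\gamma$ below a given bound yields an $S$-proof below the corresponding bound, and conversely. Interleaving forth and back moves makes the limit map a total bijection, while the construction guarantees it is a Boolean homomorphism commuting with $\Box$.

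The main obstacle I expect is exactly the self-referential, globally interlinked character of the box constraints. Unlike an ordinary back-and-forth, adjoining one element reshapes the provability diagram and can retroactively constrain images already fixed, so the bookkeeping must be organized so that each extension remains compatible with all earlier choices. Making the fixed-point definition of the image sentences robust enough that clause $(ii)$ delivers \emph{both} directions of the required equivalences with controllable proof bounds, all while preserving injectivity and surjectivity in the limit, is where the real work lies and where the uniform coherence hypothesis is essential.
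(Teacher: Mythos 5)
The paper itself does not prove this theorem --- it is quoted from Shavrukov's \emph{Isomorphisms of diagonalizable algebras} \cite{Shav97a} --- and your outline does correctly identify the shape of Shavrukov's argument: a back-and-forth construction of a recursive isomorphism of the Lindenbaum algebras, arranged via arithmetical fixed points to commute with $\Box$, with clause $(ii)$ of coherence used to transport proof-bounded provability facts between $T$ and $S$. But what you have written is a plan, not a proof, and it stops exactly where the content of the theorem begins. The extension step is never carried out: you must specify the precise invariant relating a finite tuple $A_1,\dots,A_k$ of $T$-sentences to its matching tuple $B_1,\dots,B_k$ of $S$-sentences, and your candidate invariant (``the Boolean type of the current generators together with the $\sim$-values of the boxes of all relevant Boolean combinations'') is not workable as stated, because the subalgebra generated by finitely many elements under the Boolean operations and $\Box$ is in general infinite, so ``all box-relationships among the generators chosen so far'' is not a finite, checkable condition; any finite truncation of it is exactly what gets destroyed when a new generator is adjoined. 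This is the ``retroactive constraint'' problem you name in your last paragraph but do not solve, and solving it --- finding an invariant strong enough to be self-propagating and proving the one-step extension lemma via a fixed-point construction that realizes a prescribed finite provability diagram --- is the theorem.

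A second, more specific gap concerns the use of the hypothesis. You invoke clause $(i)$ on the grounds that boxed sentences are $\Sigma_1$, so the two theories ``agree on precisely such data.'' But the invariant one must propagate involves \emph{non}-provability as well (consistency of various Boolean combinations of the generators and their iterated boxes --- this is why the class is $\mathcal{B}(\Sigma_1)$ and not $\Sigma_1$), together with quantitative proof-length bounds supplied by clause $(ii)$, and it has to be verified that both directions of the required equivalences transfer with bounds that are uniform in the stage of the construction. None of this verification appears in the proposal; you explicitly defer it as ``where the real work lies.'' So the strategy is the right one, but the proposal as written establishes nothing beyond the (standard) fact that $\fL_T \cong \fL_S$ as Boolean algebras.
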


Later G.\,Adamsson obtained the following theorem \cite[Theorem 17]{Adam11}, 
which
improved Shav\-ru\-kov's non-isomorphism condition in two ways.  
Here $\sP_T(x)$ is the \emph{Parikh speed-up function for} $T$ (see \cite[Definition 5]{Adam11}) defined as follows: the value $\sP_T(n)$ is the least upper bound on the length (number of symbols) of $T$-proofs of all formulas $\psi$ such that $\Box_T\psi$ has a $T$-proof with at most $n$ symbols.

\begin{theorem}\label{th:adamnoniso}
	If there is an epimorphism from $\fD_T$ onto $\fD_S$, then
	there is an elementary function $t(x)$ such that 
	$\sP_T(n) \leqslant t(\sP^{(4)}_S(n))$
	holds for infinitely many $n$.
\end{theorem}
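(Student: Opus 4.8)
The plan is to translate the algebraic hypothesis into a comparison of proof lengths, and then to read off the stated growth bound directly from the definition of the Parikh function. Fix the epimorphism $F\colon \fD_T \to \fD_S$. Using surjectivity, associate with every $T$-sentence $\psi$ some $S$-sentence $\sigma_\psi$ with $F[\psi]_{\sim_T} = [\sigma_\psi]_{\sim_S}$; this choice need not be effective, and that non-effectivity is the source of the final ``for infinitely many $n$'' rather than ``for all but finitely many $n$''. The two facts I would extract from $F$ being a homomorphism are, first, forward preservation of provability, $T \vdash \theta \Rightarrow S \vdash \sigma_\theta$, and second, commutation with the box, $F(\Box_T[\psi]) = \Box_S[\sigma_\psi]$. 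The target is to show that for infinitely many $n$, every $\psi$ admitting a $T$-proof of $\Box_T\psi$ with at most $n$ symbols already admits a $T$-proof of $\psi$ of length at most $t(\sP_S^{(4)}(n))$ for a suitable elementary $t$; by the definition of $\sP_T$ this is exactly the conclusion.

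The mechanism by which $\sP_S$ enters is a relay through $F$. Suppose $T \vdash_{\leqslant n} \Box_T\psi$. Since $\Box_T\psi$ is then a theorem and $T$ is $\Sigma_1$-sound, $\psi$ is $T$-provable and $\Box_T[\psi] = \top$; commutation gives $\Box_S[\sigma_\psi] = \top$, i.e.\ $S \vdash \Box_S\sigma_\psi$, and the definition of $\sP_S$ yields a short $S$-proof of $\sigma_\psi$ itself once we have a length bound on the $S$-proof of $\Box_S\sigma_\psi$. The heart of the argument is therefore to produce such a length bound, elementary in $n$: a short $T$-derivation of $\Box_T\psi$ must be transported to a short $S$-derivation of $\Box_S\sigma_\psi$. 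Here I would work with the bounded provability formulas $\exists p \leqslant \num{n}\, \Prf_T(p, \gn{\psi})$, whose classes are genuinely distinct elements of $\fD_T$ carrying the proof-length information that the total box forgets; the difficulty is precisely that $F$ is only required to respect $\Box_T$, not these bounded approximants, so one must control the discrepancy between $F$ applied to a bounded box and the corresponding bounded box in $S$. This is where formalized reflection and the hypotheses of $\Sigma_1$-soundness do the work, and where the four iterations of $\sP_S$ accumulate: each passage across $F$ in which a box is stripped and later reinstated --- pushing $\Box_T\psi$ forward, peeling it with $\sP_S$, re-boxing to recover $\psi$ from $\sigma_\psi$, and reflecting once more through the homomorphism --- costs one application of the speed-up function, and a careful count of the round trip gives four.

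To obtain the bound for infinitely many $n$ rather than a single value, I would argue by counting. For each $n$ only finitely many $\psi$ have a $\Box_T\psi$-proof of size $\leqslant n$, and the non-effective assignment $\psi \mapsto \sigma_\psi$ cannot be length-distorting at every scale simultaneously; a pigeonhole argument over the stratification of sentences by proof size then locates infinitely many thresholds $n$ at which the transport is length-controlled, which is all the conclusion requires. The main obstacle, as indicated above, is the first step of the relay: extracting an \emph{effective, length-bounded} simulation of $T$-proofs by $S$-proofs from an \emph{abstract} epimorphism that is given with no computability guarantee. Everything downstream --- invoking $\sP_S$, iterating it, and passing between provability and truth via $\Sigma_1$-soundness --- is comparatively routine bookkeeping, but pinning the exponent to exactly $4$ (and not more) depends on organizing that simulation so that no superfluous box is introduced.
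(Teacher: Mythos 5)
Your proposal does not go through, and the gap is exactly the one you flag yourself and then leave unresolved: an epimorphism $\fD_T \to \fD_S$ is only a map on provable-equivalence classes, so it carries \emph{no} information about proof lengths. There is no mechanism by which a short $T$-proof of $\Box_T\psi$ can be ``transported'' to a short $S$-proof of $\Box_S\sigma_\psi$; the representative $\sigma_\psi$ is not effectively obtained, its size is uncontrolled, and the image under the epimorphism of a bounded provability formula $\exists p \leqslant \num{n}\,\Prf_T(p,\gn{\psi})$ is just some element of $\fD_S$ bearing no relation to bounded provability in $S$. Your closing remark that ``a careful count of the round trip gives four'' is not backed by any construction, and the pigeonhole claim that the assignment $\psi \mapsto \sigma_\psi$ ``cannot be length-distorting at every scale simultaneously'' is unjustified --- nothing prevents it from being arbitrarily distorting everywhere. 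So the proposal identifies the correct obstacle but supplies no idea that overcomes it.

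The actual argument (Adamsson's, following Shavrukov; reproduced and sharpened here as Theorem \ref{th:main}) circumvents the obstacle by never trying to move individual proofs across the homomorphism. Instead it turns $\fD_T$ into a model of computation: inputs are the sentences $\sharp^n_T = \Box^{(n+1)}_T\bot \land \neg\Box^{(n)}_T\bot$, a sentence $\gamma$ computes the $\{0,1\}$-valued partial function $\delta^T_\gamma$ via $T \vdash \sharp^n_T \to \pm\gamma$, and the complexity measure $\Delta^T_\gamma$ is the least such proof. A homomorphism transports the \emph{extensional} function ($\delta^S_{\gamma^*}$ extends $\delta^T_\gamma$) while forgetting its complexity, and the lost quantitative information is recovered from recursion theory rather than from the algebra: Blum's compression theorem (Lemmas \ref{lm:BlumPhi}, \ref{lm:BlumX}) manufactures functions that no program can compute fast, a self-referential sentence $B$ is built so that the relation between $\Delta^T_A$ and $\Delta^T_B$ is \emph{provable} in $T$ and hence survives the homomorphism, and the ``infinitely many $n$'' comes not from counting but from the fact that $\{n \in X \mid j(n) \leqslant \Delta^S_\alpha(n)\}$ must be infinite, since otherwise the r.e.\ nonrecursive set $X$ would be recursive. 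None of these ingredients --- the $\sharp^n$ encoding, Blum speed-up, the nonrecursive set, the fixed-point construction of $B$ --- appears in your outline, and without them the theorem is out of reach.
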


 Firstly, he weakened the condition itself, which allowed to obtain new non-isomorphism results demonstrating that the isomorphism class of $\fD_T$ actually depends on the choice of the formula $\mathsf{Pr}_T(x)$. In particular, he showed \cite[Corollary 21]{Adam11} that $(\fL_T, \Box_T) \not \cong (\fL_T, \Box^{(6)}_T)$, where $\Box^{(6)}_T$ is, of course, $\Box_T$ iterated six times.
  Secondly, he showed that this weakened condition is sufficient to obtain the non-existence of an epimorphism from $\fD_T$ onto $\fD_S$, rather than just isomorphism. In his work Adamsson notes that the obtained condition is ``by no means optimal'' and ``sharper bounds are certainly possible''.

Our work is a natural continuation of this line of research. The main theorem we prove (Theorem \ref{th:main}) provides a more general version of Adamsson's condition, demonstrating that sharper bounds are indeed possible. 
To do so we first discuss several properties of \emph{the reflection function $\sR_T(x)$ for} $T$ (which is closely related to $\sP_T(x)$ present in Adamsson's condition), which are then used to obtain the desired improvements in the main theorem, and introduce the classes of functions $\cF^k_{T}(U)$, which can be seen as the generalizations of the class of provably total computable functions $\cF(T)$, and are implicitly present in Adamsson's approach.
Using the improved non-isomorphism condition we derive that $(\fL_T, \Box_T) \not \cong (\fL_T, \Box_T\Box_T)$, a result anticipated in \cite{Adam11}, and obtain a more general non-isomorphism of this kind. More importantly, we consider several more natural examples of theories with non-isomorphic algebras but the same classes of provably total computable functions, among them two $\Pi_1$-coherent theories.
Finally, we consider the \emph{bimodal diagonalizable algebras}, where the second modality is interpreted as an operator of \emph{1-provability}, i.e., provability in a theory together with all true $\Pi_1$-sentences added as additional axioms. We naturally focus on pairs of theories with isomorphic diagonalizable algebras, but non-isomorphic bimodal algebras. We start with examples, where non-isomorphism can be established more algebraically (as opposed to Shavrukov's approach), and also obtain a family of similar examples by applying our improved version of the Shavrukov--Adamsson theorem.

The paper is organized as follows. 
Section 2 introduces the basic notions and
notation used throughout the paper. 
In Section 3 we define the classes $\cF^k_{T}(U)$, study their properties and prove certain important facts about the reflection function $\sR_T(x)$. 
Section 4 contains the main results of the paper, namely, the main theorem on the non-isomorphism of diagonalizable algebras and its various applications to natural pairs of theories. In Section 5 we discuss certain non-isomorphism results on bimodal diagonalizable algebras.

\section{Preliminaries}

In this paper we consider first-order theories that contain some basic arithmetical theory. We always assume that if a theory in question is not formulated in the language of arithmetic, then some standard translation of arithmetical language into the language of the theory is fixed, and we identify arithmetical formulas with their translations.
As our basic arithmetical theory we take \emph{Elementary arithmetic} $\EA$, also known as $\mathsf{I}\Delta_0(\exp)$, that is, the first-order theory formulated in the pure arithmetical language $0, (\cdot)', +, \times$ enriched with the unary function symbol $\exp$ for the exponentiation function $2^x$.
It has the standard defining axioms for these symbols and the induction schema for all \emph{elementary formulas} (or \emph{$\Delta_0(\exp)$-formulas}), 
i.e., formulas in the language with exponent containing only bounded (by the terms in the language with exponent) quantifiers. Along with it we also consider a smaller class of \emph{$\Delta_0$-formulas}, which are the formulas in pure arithmetical language (without exponent) containing only bounded quantifiers.
We define the classes $\Sigma_0$ and $\Pi_0$ to be $\Delta_0$. 
The classes $\Sigma_n$ and $\Pi_n$ of \emph{arithmetical hierarchy} are then defined recursively in a standard way for all $n > 0$. We also consider the classes of boolean combinations of $\Sigma_n$-formulas which are denoted by  $\mathcal{B}(\Sigma_n)$.

The theory $\EA^+$ is defined to be an extension of $\EA$ with the axiom asserting the totality of the superexponential function $2^x_y$ (which can be naturally expressed by a $\Pi_2$-formula in the language of $\EA$).
If we allow induction for all arithmetical formulas, the resulting theory is \emph{Peano arithmetic} denoted by $\PA$.
For a fixed class of arithmetical formulas $\Gamma$ the fragment of $\PA$ obtained by restricting the \emph{induction schema}
$$
\forall y \left(\phi(0, y) \land \forall x \left(\phi(x, y) \to \phi(x + 1, y)\right) \to \forall x\, \phi(x, y)\right), \quad \phi(x, y) \in \Gamma
$$
to $\Gamma$-formulas  is denoted by $\mathsf{I}\Gamma$.
We also consider the following \emph{schema of $\Sigma_1$-collection}, denoted by  $\mathsf{B}\Sigma_1$,
$$
\forall x < z\, \exists y\, \phi(x, y, a) \to \exists u\, \forall x < z\, \exists y < u\, \phi(x, y, a), \quad \phi(x, y, a) \in \Sigma_1.
$$

We assume that some standard arithmetization of syntax and the g{\"o}delnumbering of syntactic objects is fixed.
In particular, we write $\gn{\phi}$ for the (numeral of the) \emph{g{\"o}delnumber of}~$\phi$.
All the theories appearing throughout the paper are assumed to be \emph{recursively axiomatizable $\Sigma_1$-sound extensions} of $\EA$.
Recall that \emph{$\Sigma_1$-soundness} means that each provable $\Sigma_1$-sentence is true in the standard model $\nat$.
Moreover, (unless explicitly stated otherwise) we also assume that each theory $T$ is given to us by its \emph{elementary axiomatization}, i.e., an elementary formula $\sigma_T(x)$,
defining the set of axioms of $T$ in the standard model. 
This formula $\sigma_T$ is used in the construction of
the \emph{standard proof predicate} $\Prf_{\sigma_T}(p, x)$ for $T$, 
an elementary formula expressing the relation ``$p$ codes a Hilbert-style proof of the formula with g{\"o}delnumber $x$ from the axiom set defined by the formula $\sigma_T$''. 
The \emph{standard provability predicate} for $T$ (w.r.t. a fixed elementary axiomatization $\sigma_T$) is then given by $\exists p\, \Prf_{\sigma_T}(p, x)$, and we denote this formula by $\Box_{\sigma_T}(x)$.
In practice, almost everywhere we use the following less detailed notation $\Prf_T(p, x)$ and $\Box_T(x)$ to denote $\Prf_{\sigma_T}(p, x)$ and $\Box_{\sigma_T}(x)$ for some fixed axiomatization $\sigma_T$ of a theory $T$ under consideration, which does not lead to confusion if we consider a single axiomatization. But if we are dealing with several axiomatizations of the same theory we revert back to the official notation to make the difference between the proof and provability predicates for different axiomatizations explicit.
We use the notation $T \vdash_p \phi$ for $\exists q \leqslant p\, \Prf_T(q, \gn{\phi})$.
As usual, we also write $\Box_T \phi$ instead of $\Box_T(\gn{\phi})$ and quantify over (the g{\"o}delnumbers of) formulas, their sequences and other syntactic objects freely. The finite iterations of $\Box_T$ are denoted by $\Box^{(k)}_T$.

The standard provability predicate $\Box_T$ satisfies the following \emph{L{\"o}b's derivability conditions}:
\begin{enumerate}
	\item If $T \vdash \varphi$, then $\EA \vdash \Box_T\varphi$,
	\item $\EA \vdash \Box_T(\varphi \to \psi) \to (\Box_T\varphi \to \Box_T\psi)$,
	\item $\EA \vdash \Box_T\varphi \to \Box_T\Box_T\varphi$.
\end{enumerate}
Moreover these conditions hold provably in $\EA$ with $\phi$ and $\psi$ being universally quantified over.
Point 3 follows from the general fact known as \emph{provable $\Sigma_1$-completeness}:
$$
\EA \vdash \forall x_1\, \dots \forall x_m \left(\sigma(x_1, \dots, x_m) \to \Box_T\gn{\sigma(\num{x}_1, \dots, \num{x}_m)}\right),
$$
whenever $\sigma(x_1, \dots, x_m)$ is a $\Sigma_1$-formula. 
Here the underline notation $\ul \varphi(\num{x}) \ur$ stands for the elementarily definable term, representing the elementary function
that maps $k$ to the g{\"o}delnumber of the formula $\ul\varphi(\num{k})\ur$, where $\num{k}$ is the $k$th numeral, i.e., the term $0^{' \dots '}$ with $k$ primes.
In what follows we usually write just $\Box_T \phi(\num{x})$ instead of $\Box_T \gn{\phi(\num{x})}$.

We say that two theories $T$ and $S$ are \emph{$\Gamma$-equivalent} if 
$T \vdash \gamma$ if and only if $S \vdash \gamma$ for each sentence $\gamma \in \Gamma$. In case $\Gamma$ is the class of all arithmetical sentences we say that $T$ and $S$ are \emph{deductively equivalent} and use the notation $T \equiv S$.

Among the extensions of a given theory $T$ we consider the \emph{reflection principles}. The \emph{local $\Gamma$-reflection principle} $\Rfn_{\Gamma}(T)$ is the schema $\Box_T \gamma \to \gamma$ for each sentence $\gamma \in \Gamma$. Similarly the \emph{uniform $\Gamma$-reflection principle} $\RFN_{\Gamma}(T)$ is the schema $\forall x \left(\Box_T \gamma(\num{x}) \to \gamma(x) \right)$ for each formula $\gamma \in \Gamma$. Usually $\Gamma$ is one of the classes $\Sigma_n$ or $\Pi_n$ for $n > 0$. If $\Gamma$ is the set of all arithmetical formulas we write $\Rfn(T)$ and $\RFN(T)$.
Let us also mention the following useful principle
known as the \textit{small reflection},
for each formula $\delta(x)$ we have
$$
	\EA \vdash \forall x\, \forall y\, \Box_T \left(\Prf_T(\num{y}, \gn{\delta(\num{x})}) \to \delta(\num{x})\right).
$$

It is known that for each $n > 0$ there exists a $\Pi_n$-formula 
$\mathsf{Tr}_{\Pi_n}(x)$, the \emph{truth definition} for the class of $\Pi_n$-formulas (and similarly for $\Sigma_n$-formulas), such that
$$
\EA \vdash \forall x_1,\dots, x_m\:(\varphi(x_1, \dots, x_m) \eqv \mathsf{Tr}_{\Pi_n}(\gn{\varphi(\num{x}_1, \dots, \num{x}_m)})),
$$
for every $\Pi_n$-formula $\varphi(x_1, \dots, x_m)$,
and this fact itself is formalizable in $\EA$, so
$$
\EA \vdash \forall \pi \in \Pi_n\, \Box_T \left(\pi \eqv \mathsf{Tr}_{\Pi_n}(\pi)\right).
$$
The same also holds for $n = 0$, but note that the truth predicate for $\Delta_0$-formulas $\mathsf{Tr}_{\Delta_0}(x)$ used in the construction of the above truth predicates is defined by a $\Delta_0(\exp)$-formula, rather than a $\Delta_0$-formula.
The truth predicates are used to define the so-called \emph{strong provability predicates}. In this paper we only consider the predicate of \emph{$1$-provability},
i.e., usual provability in $T$ together with all true $\Pi_1$-sentences taken as additional axioms. Namely, we define the predicate of \emph{$1$-provability} in $T$ by the following formula
$$
[1]_T \varphi := \exists \pi \left(\mathsf{Tr}_{\Pi_1}(\pi) \wedge \Box_T(\pi \to \phi)\right),
$$
The predicate $[1]_T$ satisfies the same derivability conditions as $\Box_T$ and is provably $\Sigma_{2}$-complete.
It is not hard to see that $\RFN_{\Sigma_1}(T)$ is equivalent to $\neg [1]_T \bot$ provably in $\EA$. 
We also recall the following notation from \cite{KolmBek19} for the set of sentences which are \emph{$T$-provably $1$-provable in} $S$,
$$
C_S(T) := \{\psi \mid T \vdash [1]_S\psi \}.
$$

A function $f \colon \nat^k \to \nat$ is said to be a \emph{provably total computable function of} $T$ if there is a $\Sigma_1$-formula $\psi(x_1, \dots, x_k, y)$ defining the graph of $f$ in the standard model and 
$$
T \vdash \forall x_1\, \dots \forall x_k\, \exists y\, \psi(x_1, \dots, x_k, y).
$$ 
The \emph{class of provably total computable functions of} $T$ is denoted by $\cF(T)$. It is known that $\cF(\EA) = \mathcal{E}_3$, the class of \emph{Kalmar elementary functions}. Since we deal only with extensions of $\EA$, the class $\cF(T)$ always contains all elementary functions. When comparing the growth rate of functions in this paper, we do it up to an elementary transformation of the input, and use the following notation for (partial) functions $f, g \colon \nat \to \nat$ and $X \subseteq \nat$. We write
$f \leqslant_X g$, if $X \subseteq \dom\, f \cap \dom\, g$ and for each $n \in X$ we have $f(n) \leqslant g(n)$, and
$f \preccurlyeq_X g$, if there is an elementary function $q(x)$ such that $f \leqslant_X q \circ g$. We also write $f \approx_X g$, when both $f \preccurlyeq_X g$ and $g \preccurlyeq_X f$ hold. The notation $f =_X g$ means that for each $n \in X$ we have that either both $f(n)$ and $g(n)$ are undefined, or both values are defined and equal. In case $X = \nat$ we just write $f = g$.
We assume that some standard g{\"o}delnumbering of Turing machines and their computations is fixed. In particular, the \emph{universal function} is denoted by $\phi_e(x)$, so $\phi_e$ is the unary function, computed by the Turing machine with code $e$. The number $e$ in this case is called a \emph{$\phi$-index} of $f$ (other indexing functions will be introduced later). We also consider the following \emph{complexity measure} associated with $\phi_e$,
$$
\Phi_e(x) := x + e + \text{the number of steps in the computation $\phi_e(x)$},
$$ 
whenever the computation $\phi_e(x)$ converges. We use these expressions in formalized contexts and assume that the formalization is natural enough, so that the Kleene $T$-predicate and, in particular, the predicate $\Phi_e(x) \leqslant y$ are expressed by the elementary formulas. We also say that $f(x)$ \emph{is dominated by} $g(x)$ if there is a $m \in \nat$ such that for all $n \geqslant m$ we have $f(n) < g(n)$. The expression $f^{(n)}(x)$ denotes the $n$-fold composition of $f(x)$ with itself, and $f^{(0)}(x) = x$.
Finally, we say that $f(x)$ is \emph{elementarily cumulative} (or just \emph{cumulative}, for short) if there is a $\phi$-index $a$ for $f$ such that $\Phi_a \preccurlyeq_{\dom\, f} f$, so the value $f(x)$ can be computed in time elementary in it. This notion is very close to the notion of elementarily honest function introduced in \cite{MeRi72}  (but it is not the same, since, for instance, all elementary $\{0,1\}$-valued functions are honest yet not cumulative), which turns out to be equivalent to $f$ having an elementary graph. We, however, stick with the notion of cumulativity used in \cite{Shav93a}, one of the reasons being that in some modern definitions of honesty, e.g., used in the study of honest elementary degrees  (see \cite{Kris98}), several additional properties, like being strictly increasing or growing at least as fast as the function~$2^x$, are required.

\section{Reflection function $\sR_T(x)$ and the classes $\cF^k_{T}(U)$}

In his work Adamsson introduces several functions (see \cite[Definition 5]{Adam11}) measuring the proof speed-up obtained by adding a certain inference rule to $T$, e.g., the Parikh rule (from $\Box_T\psi$ derive $\psi$), 
and shows them to be closely related in terms of the growth rate.
Adamsson's necessary condition (see Theorem \ref{th:adamnoniso}) for the existence of an epimorphism from $\fD_T$ onto $\fD_S$ is formulated as a restriction on the growth rate of one of such functions for the theory $T$ in terms of the same function for the theory $S$. Namely, it asserts that the Parikh speed-up function $\sP_T(x)$ for $T$ cannot dominate all compositions of the form $t(\sP^{(4)}_S(x))$, where $t(x)$ is an elementary function and $\sP_S(x)$ is the Parikh speed-up function for $S$.
In this section we obtain a stronger condition of this form.

As a measure of proof size Adamsson uses the number of symbols. In our presentation we (following Shavrukov \cite{Shav93a}) use the g{\"o}delnumber of a proof instead. This does not lead to drastic changes in the growth rate since these two measures are elementarily related. Also, instead of the Parikh speed-up function $\sP_T(x)$ we consider the $\Sigma_1$-reflection function $\sR_T(x)$, which is defined below, for the sake of it being more general and easier to work with. But since these two functions are elementarily related (see \cite[Proposition 5]{Adam11}), the results can also be reformulated in terms of $\sP_T(x)$.

Define the \emph{$\Sigma_1$-reflection function} $\sR_T(x)$ for a theory $T$ as follows
$$
\sR_T(x) = \mu y.\left[
\forall \delta(\cdot) \in \Delta_0 \left(
T \vdash_x \exists z\, \delta(z) \to \exists u \leqslant y\, \delta(u)
\right)
\right],
$$
i.e., $\sR_T(x)$ gives the least upper bound to the least witnesses of all $\Sigma_1$-sentences, which have a $T$-proof with g{\"o}delnumber less than or equal to $x$ (cf. the reflection principle in Theorem \ref{th:shavnoniso}). Since $T$ is $\Sigma_1$-sound, the function $\sR_T(x)$ is recursive.
As can be seen from the definition, it certainly depends on a particular axiomatization of $T$, since it is defined in terms of a proof predicate for $T$. So an appropriate notation would rather be $\sR_{\sigma_T}(x)$, where $\sigma_T(z)$ is an elementary formula defining the set of axioms of $T$. However, as in the case with the provability predicate $\Box_T$, we usually write just $\sR_T(x)$, since $\sigma_T(z)$ is assumed to be fixed in each particular case. Let us also note the use of $\Delta_0$-formulas (rather than $\Delta_0(\exp)$) in the definition of $\sR_T(x)$, as well as in the definition of the classes of arithmetical hierarchy, which is related to the lack of an elementary truth predicate for the class of $\Delta_0(\exp)$-formulas.

A typical application of $\sR_T(x)$ is when $\exists z\, \delta(z)$ is taken to be a $\Sigma_1$-sentence ($\EA$-provably equivalent to) $\Box_T \psi$, i.e., $\exists p\, \Prf_T(p, \gn{\psi})$. We then can find an upper bound on a (g{\"o}delnumber of a) $T$-proof of $\psi$, given such bound on a $T$-proof of $\Box_T \psi$. Namely, if $T \vdash_x \Box_T \psi$, then 
$T \vdash_{\sR_T(x)} \psi$. Note that although $\Prf_T(p, \gn{\psi})$ is an elementary formula (rather than just $\Delta_0$), we still have the above implication since we assume $\exists p\, \Prf_T(p, \gn{\psi})$ to be written in the form $\exists z\, \delta(z)$, where $\delta(z)$ is a $\Delta_0$-formula, in the most natural way (e.g., replacing each occurrence of $2^x$ with its $\Delta_0$-definition and bounding all existential quantifiers with a single one $\exists z$ in front). The least witness to $\exists z\, \delta(z)$ is then certainly not less than the least witness to $\exists p\, \Prf_T(p, \gn{\psi})$, as required. Similar applications of $\sR_T(x)$ in case $T \vdash_x \Box_T\psi$ with $\mathsf{Prf}_T(p, x)$ being an elementary formula rather than $\Delta_0$ are also present in \cite[Lemma 3(a)]{Adam11} and \cite[Lemma 4]{Shav93a}.

As noted in \cite{Adam11} the function $\sR_T(x)$ is close to being a $T$-provably total computable function (see Proposition \ref{prop:boxkRk}), although $\sR_T(x) \notin \cF(T)$. It is this observation that Adamsson uses to improve Shavrukov's non-isomorphism condition.
The following definition of the \emph{classes} $\cF^k_{T}(U)$ is inspired by this observation.
Denote by $\cF_{T}(U)$ the class of all functions $f \colon \nat^n \to \nat$ such that 
$$
U \vdash \forall x_1 \dots \forall x_n \Box_T \left(\exists y\, \psi_f(\num{x_1}, \dots, \num{x_n}, y)\right),
$$
for some $\Sigma_1$-formula $\psi_f(x_1, \dots, x_n, y)$ defining the graph of $f(x_1, \dots, x_n)$ in the standard model. As usual, the appropriate notation would be $\cF_{\Box_T}(U)$, since the class depends on the provability predicate $\Box_T$ of $T$. However, as $\Box_T$ is usually assumed to be fixed, we use a less heavy notation $\cF_T(U)$ instead.
Note that the definition makes sense for an arbitrary provability predicate instead of $\Box_T$ (not necessarily standard). In particular, we can consider a \emph{sequence of classes} $\cF^k_{T}(U)$ for $k \geqslant 1$, where we use $\Box^{(k)}_T$ instead of $\Box_T$ above. We also abuse the notation and sometimes write $\Box^{(0)}_T \psi(\num{x})$ instead of $\psi(x)$, 
when no confusion arises,
although $\Box^{(0)}_T(x)$ is not a formula. It is then natural to define  $\cF^0_{T}(U)$ to be $\cF(U)$, the class of all provably total computable functions of $U$.
 By provable $\Sigma_1$-completeness we have
$$
\cF(U) = \cF^0_{T}(U) \subseteq \cF_{T}(U) = \cF^1_{T}(U) \subseteq \cF^2_{T}(U) \subseteq \dots \subseteq \cF(U + \RFN_{\Sigma_1}(T)),
$$
and also $\cF(T) \subseteq \cF_{T}(\EA)$, since $\EA \vdash \Box_T\forall x\, \psi(x) \to \forall x\, \Box_T \psi(\num{x})$.
Thus, we get an increasing sequence of classes which are not, in general, closed with respect to composition (unlike $\cF(U)$). 
We, however, have the following closure property.
\begin{lemma}\label{lm:Fkcompos}
	If $T$ contains $U$, then for each $F(x) \in \cF^m_{T}(U)$ and $G(x) \in \cF^n_{T}(U)$ we have
	$G(F(x)) \in \cF^{m + n}_{T}(U)$.
\end{lemma}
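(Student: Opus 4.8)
The plan is to produce an explicit $\Sigma_1$-definition of the graph of $G \circ F$ and then verify the required provability condition by reasoning inside iterated boxes. Fix $\Sigma_1$-formulas $\psi_F(x,y)$ and $\psi_G(y,z)$ witnessing $F \in \cF^m_{T}(U)$ and $G \in \cF^n_{T}(U)$, so that $U \vdash \forall x\, \Box^{(m)}_T(\exists y\, \psi_F(\num{x}, y))$ and $U \vdash \forall y\, \Box^{(n)}_T(\exists z\, \psi_G(\num{y}, z))$. As the defining formula for the composition I would take $\psi_{G\circ F}(x,z) := \exists y\,(\psi_F(x,y) \wedge \psi_G(y,z))$, which is (equivalent to) a $\Sigma_1$-formula and, since $\psi_F, \psi_G$ define single-valued graphs, clearly defines the graph of $G \circ F$ in $\nat$. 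The goal then becomes $U \vdash \forall x\, \Box^{(m+n)}_T(\exists z\, \psi_{G\circ F}(\num{x}, z))$, and I will use the identity $\Box^{(m+n)}_T = \Box^{(m)}_T\Box^{(n)}_T$ throughout.

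The core is to establish in $\EA$, with $x$ free, the implication
$$\Box^{(m)}_T(\exists y\, \psi_F(\num{x}, y)) \wedge \Box^{(m)}_T\bigl(\forall y\, \Box^{(n)}_T(\exists z\, \psi_G(\num{y}, z))\bigr) \to \Box^{(m+n)}_T(\exists z\, \psi_{G\circ F}(\num{x}, z)).$$
Once this is in hand, I combine it with the hypotheses: the first conjunct is exactly the $F$-property, while the second is obtained as follows. Since $G \in \cF^n_{T}(U)$ gives $U \vdash \forall y\, \Box^{(n)}_T(\exists z\, \psi_G(\num{y}, z))$, and $T$ contains $U$, the theory $T$ proves this sentence; hence by derivability conditions 1 and 3 (iterated) $\EA$, and so $U$, proves its $m$-fold boxing, which is the second conjunct. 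This is the \emph{only} place where the hypothesis $T \supseteq U$ is used: it lets a $U$-theorem be boxed up inside the $T$-boxes.

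To prove the key implication I would argue inside the $m$-fold box, exploiting that $\EA$ proves the derivability conditions and provable $\Sigma_1$-completeness uniformly, so these are available under the boxes. Having $\exists y\, \psi_F(\num{x}, y)$, fix a witness $y_0$; provable $\Sigma_1$-completeness for the $\Sigma_1$-formula $\psi_F$ followed by condition 3 yields $\Box^{(n)}_T \psi_F(\num{x}, \num{y_0})$, while instantiating the available statement $\forall y\, \Box^{(n)}_T(\exists z\, \psi_G(\num{y}, z))$ at $y_0$ gives $\Box^{(n)}_T(\exists z\, \psi_G(\num{y_0}, z))$. Distributing $\Box^{(n)}_T$ over the conjunction and then over the logically valid implication $\psi_F(\num{x}, \num{y_0}) \wedge \exists z\, \psi_G(\num{y_0}, z) \to \exists z\, \psi_{G\circ F}(\num{x}, z)$ produces $\Box^{(n)}_T(\exists z\, \psi_{G\circ F}(\num{x}, z))$, a formula no longer mentioning $y_0$, so the witness is discharged. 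Thus inside $m$ boxes we reach $\Box^{(n)}_T(\exists z\, \psi_{G\circ F}(\num{x}, z))$, i.e.\ globally $\Box^{(m+n)}_T(\exists z\, \psi_{G\circ F}(\num{x}, z))$, and $\forall$-generalization on $x$ finishes the argument.

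The step requiring the most care is the transfer of the $F$-witness under the boxes: the witness $y_0$ produced inside the $m$-fold box is a bound variable, not an actual numeral, so $\psi_F(\num{x}, y_0)$ must first be converted to the boxed numeral form $\Box^{(n)}_T\psi_F(\num{x}, \num{y_0})$ by \emph{internal} provable $\Sigma_1$-completeness before it can be merged with the $G$-statement; it is precisely the need to raise the $F$- and $G$-parts to a common box depth (adding exactly $n$ boxes) that forces the total depth $m+n$. I would finally check the degenerate cases $m=0$ or $n=0$ separately, where the convention $\Box^{(0)}_T\psi(\num{x}) := \psi(x)$ strips the outer or inner box and the reasoning simplifies accordingly, still yielding the required conclusion.
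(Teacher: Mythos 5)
Your proof is correct and follows essentially the same route as the paper's: the same composite $\Sigma_1$-graph $\exists y\,(\psi_F(x,y)\land\psi_G(y,z))$, the same use of internal provable $\Sigma_1$-completeness to raise the $F$-witness to box-depth $n$, and the same use of $T\supseteq U$ to necessitate the $G$-totality statement $m$ times. The only (immaterial) difference is organizational: the paper first proves $U\vdash\forall x\,(\exists y\,\psi_F(x,y)\to\Box^{(n)}_T\exists z\,\chi(\num{x},z))$ and then applies $\Box^{(m)}_T$ to that whole implication, whereas you box only the $G$-statement and carry out the same combination under the $m$-fold box.
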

\begin{proof}
We consider the case $m > 0$ and $n > 0$. Fix $\Sigma_1$-formulas $\psi_F(x, y)$ and $\theta_G(x, y)$ defining the graphs of $F(x)$ and $G(x)$ such that
$$
U \vdash \forall x\, \Box^{(m)}_T \exists y\, \psi_F(\num{x}, y) \quad 
\text { and } \quad 
U \vdash \forall x\, \Box^{(n)}_T \exists y\, \theta_G(\num{x}, y).
$$
Consider a $\Sigma_1$-formula $\chi(x, z) \leftrightharpoons 
\exists y \left(
\psi_F(x, y) \land \theta_G(y, z)\right)$, which defines the graph of $G(F(x))$. We prove that 
$
U \vdash \forall x\, \Box^{(m + n)}_T \exists z\, \chi(\num{x}, z).
$
Indeed, by provable $\Sigma_1$-completeness we obtain
\begin{align*}
U \vdash \psi_F(x, y) &\to \Box^{(n)}_T \left(\psi(\num{x}, \num{y}) \land \exists z\, \theta_G(\num{y}, z)\right)\\
&\to \Box^{(n)}_T \exists z\, \chi(\num{x}, z),
\end{align*}
whence $U \vdash \forall x  \left(\exists y\, \psi_F(x, y) \to \Box^{(n)}_T \exists z\, \chi(\num{x}, z) \right)$. Now, since $T$ contains $U$, the same is provable in $T$, and therefore ($m > 0$) we have
$$
\EA \vdash \Box^{(m)}_T \forall x  \left(\exists y\, \psi_F(x, y) \to \Box^{(n)}_T \exists z\, \chi(\num{x}, z) \right),
$$
whence we obtain
$$
\EA \vdash \forall x\, \Box^{(m)}_T \exists y\, \psi_F(\num{x}, y) \to \forall x\, \Box^{(m + n)}_T \exists z\, \chi(\num{x}, z).
$$
By the assumption the antecedent of this implication is provable in $U$, whence its consequent is provable in $U$, as required.
The cases $m = 0$ and $n = 0$ can be considered similarly.

\end{proof}

The following proposition confirms the above claim that $\sR_T(x)$ is close to being a $T$-provably total computable function.
We closely follow Adamsson's proof (see \cite[Proposition 9]{Adam11}) and make certain parts more formal (in order to be able to formalize the proof of this proposition).
\begin{proposition}\label{prop:boxkRk}
	For each $k \in \nat$, $\sR^{(k)}_T(x) \in \cF^k_{T}(\EA)$.
\end{proposition}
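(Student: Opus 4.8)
The plan is to prove the statement by induction on $k$, reducing everything to the base case $k = 1$, i.e.\ to the claim $\sR_T(x) \in \cF_T(\EA)$. For the inductive step, assuming $\sR^{(k)}_T \in \cF^k_T(\EA)$, I would write $\sR^{(k+1)}_T = \sR_T \circ \sR^{(k)}_T$ and invoke Lemma~\ref{lm:Fkcompos} (with $U = \EA$, which $T$ contains): since $\sR_T \in \cF^1_T(\EA)$ and $\sR^{(k)}_T \in \cF^k_T(\EA)$, their composition lies in $\cF^{1 + k}_T(\EA) = \cF^{k+1}_T(\EA)$. The case $k = 0$ is trivial, as $\sR^{(0)}_T$ is the identity and $\cF^0_T(\EA) = \cF(\EA)$. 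Thus the whole proposition follows once the base case is settled.

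For the base case I would first fix a natural $\Sigma_1$-formula (in fact an elementary, $\Delta_0(\exp)$-formula) $\psi(x,y)$ defining the graph of $\sR_T$. Reading off the definition of $\sR_T$, the predicate ``$y$ is the least upper bound for the least witnesses of all $\Sigma_1$-sentences with a $T$-proof of g\"odelnumber $\leqslant x$'' is bounded: the quantifier over $\Delta_0$-formulas $\delta$ is effectively bounded by $x$ (the conclusion of a proof has g\"odelnumber elementarily bounded by that of the proof), while each witness clause $\exists u \leqslant y\, \delta(u)$ is evaluated through the elementary truth predicate $\mathsf{Tr}_{\Delta_0}$, and the minimality clause keeps the formula elementary. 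It then remains to show
$$
\EA \vdash \forall x\, \Box_T\, \exists y\, \psi(\num{x}, y).
$$

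To establish this I would combine a uniform, bounded version of the small reflection principle with $\Sigma_1$-collection. Reasoning inside $\Box_T$ at a numeral $\num{x}$: by the uniform small reflection principle, $T$ proves that every $\Sigma_1$-sentence $\exists z\, \delta(z)$ possessing a $T$-proof of g\"odelnumber $\leqslant \num{x}$ is true, hence has a witness; there are only boundedly many such sentences (their g\"odelnumbers are bounded elementarily in $x$), so $\mathsf{B}\Sigma_1$ (available since $T$ extends $\EA$) produces a single $y$ bounding all their witnesses, and the least such $y$ exists by $\Delta_0(\exp)$-minimization and is exactly $\sR_T(\num{x})$, giving $\Box_T\, \exists y\, \psi(\num{x}, y)$. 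Since this argument is uniform in $x$ and the small reflection principle is itself provable in $\EA$ (uniformly, with the proof bound and the formula $\delta$ treated as quantified variables), the derivation formalizes in $\EA$ and yields the displayed statement.

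The main obstacle I anticipate is the bookkeeping around this uniform reflection step: one must genuinely pass from ``$T$ proves $\exists z\,\delta(z)$ via a small proof'' to ``$\exists z\,\delta(z)$'' inside $T$, which is precisely where unrestricted $\Sigma_1$-reflection would fail — it is legitimate here only because the proof bound is the fixed numeral $\num{x}$, so that the small reflection principle applies. Making the quantification over $\delta$ and over proofs $q \leqslant \num{x}$ uniform (rather than for a single fixed $\delta$, as the principle is stated), and then merging it with collection while keeping the outer verification entirely within $\EA$, is the delicate part; the growth of $\sR_T$ is exactly what forces the witness $y$ to escape $\cF(T)$, consistent with $\sR_T \notin \cF(T)$.
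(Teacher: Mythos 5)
Your overall architecture coincides with the paper's: reduce to the case $k=1$ via Lemma~\ref{lm:Fkcompos}, define the graph of $\sR_T$ by an elementary formula built from $\mathsf{Tr}_{\Delta_0}$ plus a minimality clause, and establish $\EA \vdash \forall x\, \Box_T\, \exists y\, \psi(\num{x}, y)$ by getting witnesses for the finitely many small $\Sigma_1$-theorems under the box via small reflection. The one step that does not survive scrutiny is your appeal to $\mathsf{B}\Sigma_1$ ``available since $T$ extends $\EA$'': $\Sigma_1$-collection is \emph{not} provable in $\EA$, and $T$ is an arbitrary extension of $\EA$, so you cannot assume it inside $\Box_T$. (The paper is careful about exactly this point elsewhere, e.g.\ in Lemma~\ref{lm:boxkstand}, where $\mathsf{B}\Sigma_1$ is used only via its $\Pi_2$-conservativity over $\EA$ --- and that conservativity would not transfer to an arbitrary $T$ anyway.)

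The gap is repairable precisely because the bound on $\delta$ and $p$ under the box is the \emph{numeral} $\num{x}$, so the bounded universal quantifier is $T$-provably a finite conjunction and ``collection'' degenerates to taking the maximum of finitely many witnesses --- no instance of $\mathsf{B}\Sigma_1$ is needed. But then the real work, which your write-up waves at with ``the argument is uniform in $x$'', is to check that these $T$-proofs, whose length grows with $x$, have g\"odelnumbers elementarily bounded in $x$, so that the statement $\forall x\, \Box_T(\cdots)$ is actually provable in $\EA$. The paper's device for this is an induction on $x$ carried out in $\EA$ (explicitly justified by the elementary bound on the corresponding $T$-proof): assuming $\Box_T\exists y\,\psi(\num{x-1},y)$, one adds under the box the witnesses for the finitely many new pairs $(\delta,p)$ obtained from small reflection and takes the maximum. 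Replacing your collection step by this induction (or by an explicit elementary estimate of the proof g\"odelnumbers) turns your proposal into the paper's proof; as written, the step as justified would fail.
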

\begin{proof}
The case $k = 0$ is trivial, since $\sR^{(0)}_T(x) \in \cF(\EA) = \cF^0_{T}(\EA)$. By Lemma \ref{lm:Fkcompos} it is sufficient to prove $\sR_T(x) \in \cF_{T}(\EA)$. Here we rely on the fact that the definition of $\sR_T(x)$ mentions $\Delta_0$-formulas rather than $\Delta_0(\exp)$, since in this case the truth predicate is elementary.
Consider the following elementary formula 
$$
\psi(x, y) \leftrightharpoons
\forall \delta \leqslant x \left(
\left(\delta(\cdot) \in \Delta_0 \land \exists p \leqslant x\, 
\Prf_T(p, \gn{\exists z\, \delta(z)})\right) \to
\exists u \leqslant y\, \mathsf{Tr}_{\Delta_0}(\gn{\delta(\num{u})})
\right).
$$
The graph of $\sR_T(x)$ is then defined by 
$\psi(x, y) \land \forall z < y\, \neg \psi(x, z).$
We prove by induction on $x$ (which is formalizable in $\EA$, since we have an elementary bound on the g{\"o}delnumber of the corresponding $T$-proof) that
$$
\EA \vdash \forall x\, \Box_T \left(\exists y\, \psi(\num{x}, y)
\right),
$$
whence $\EA \vdash \forall x\, \Box_T \exists y \left(
\psi(\num{x}, y) \land \forall z < y\, \neg \psi(\num{x}, z)\right)$,
follows by the least element principle applied under $\Box_T$ (since $T$ contains $\EA$ and $\psi(x, y)$ is elementary).

We argue in $\EA$ by induction on $x$ as follows. Assuming $\forall z < x\, \Box_T \left( \exists y\, \psi(\num{z}, y)\right)$ we derive 
$\Box_T \left(\exists y\, \psi(\num{x}, y)\right)$. We argue informally under $\Box_T$. By the induction hypothesis we have some $v$ such that 
$$
\forall \delta < \num{x} \left(
\left(\delta(\cdot) \in \Delta_0 \land \exists p \leqslant \num{x}\, 
\Prf_T(p, \gn{\exists z\, \delta(z)})\right) \to
\exists u \leqslant v\, \mathsf{Tr}_{\Delta_0}(\gn{\delta(\num{u})})
\right).
$$
Now, assuming $\left(\delta(\cdot) \in \Delta_0 \land \exists p \leqslant \num{x}\, 
\Prf_T(p, \gn{\exists z\, \delta(z)})\right)$,
we see that this $\delta$ is standard (since its proof $p \leqslant \num{x}$ is), whence 
by the small reflection principle we get
$\exists t\, \mathsf{Tr}_{\Delta_0}(\gn{\delta(\num{y})})$. 
Let $y$ be the maximum of $v$ and such $t$. It follows that $\psi(\num{x}, y)$, whence $\exists y\, \psi(\num{x}, y)$, as required
\end{proof}

The next proposition shows that $\sR_T(x) \notin \cF(T)$. Note that the statement and the proof is slightly more cumbersome than the one presented in \cite[Lemma 4]{Adam11}, since we define $\sR_T(x)$ in terms of the g{\"o}delnumbers of proofs rather than the number of symbols.

\begin{proposition}\label{prop:Rdomprovtot}
There is a monotone elementary function $p(x)$ such that
for each function $f(x) \in \cF(T)$ the function $\max_{y \leqslant x} f(y)$ is dominated by $\sR_T(p(x))$. Consequently, each monotone function $f(x) \in \cF(T)$ is dominated by $\sR_T(x)$.
\end{proposition}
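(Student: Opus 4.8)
The plan is to reduce, for a single $f\in\cF(T)$, each value $f(m)$ to a witness bound supplied by $\sR_T$, and then to make the bound uniform in $f$ by pushing all the $f$-dependence into the \emph{domination threshold} rather than into $p$. First I would fix $f\in\cF(T)$, a $\Sigma_1$-formula $\psi_f(x,y)$ defining its graph, and a $T$-proof of $\forall x\,\exists y\,\psi_f(x,y)$ with g\"odelnumber $c$. For each $n$, instantiating this proof at the numeral $\num{n}$ and contracting the block of existential quantifiers of the $\Sigma_1$-sentence $\exists y\,\psi_f(\num{n},y)$ into a single one yields a $T$-proof of some $\exists z\,\delta_n(z)$ with $\delta_n\in\Delta_0$; by the standard bookkeeping on g\"odelnumbers of instantiations this proof has g\"odelnumber at most $e(c,n)$ for a fixed elementary function $e(\cdot,\cdot)$, monotone in both arguments and independent of $f$. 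By the definition of $\sR_T$ there is then a $u\le\sR_T(e(c,n))$ with $\delta_n(u)$ true; since $u$ codes a witness $\langle y,w\rangle$ to $\exists y\,\psi_f(\num{n},y)$ with $y\le u$, and $\psi_f$ defines the graph of $f$, we get $f(n)=y\le u\le\sR_T(e(c,n))$. As $e$ and $\sR_T$ are monotone, summing over $m\le n$ gives $\max_{m\le n}f(m)\le\sR_T(e(c,n))$ for every $n$.

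To obtain the proposition I would set $p(x):=e(x,x)$, a monotone elementary function, and handle strictness via a $+1$ trick. Given $f\in\cF(T)$, apply the previous bound to $g(x):=f(x)+1$, which is again in $\cF(T)$ (its graph is $\exists y'(\psi_f(x,y')\wedge y=y'+1)$); let $c_g$ be the g\"odelnumber of a $T$-proof of its totality. Then for all $n\ge c_g$ one has $e(c_g,n)\le e(n,n)=p(n)$, whence $\max_{m\le n}f(m)+1=\max_{m\le n}g(m)\le\sR_T(e(c_g,n))\le\sR_T(p(n))$, i.e.\ $\max_{m\le n}f(m)<\sR_T(p(n))$. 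Thus $\max_{y\le x}f(y)$ is dominated by $\sR_T(p(x))$ with threshold $c_g$ depending only on $f$, and the single function $p$ works for all $f$.

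For the final clause let $f\in\cF(T)$ be monotone; here the idea is to re-parametrize so as to cancel $p$. I would put $g(x):=f(p(x+1))$, which lies in $\cF(T)$ and is monotone, and apply the statement just proved to $g$: this gives an $m_0$ with $f(p(n+1))=g(n)=\max_{y\le n}g(y)<\sR_T(p(n))$ for all $n\ge m_0$. Now for large $t$ let $n$ be the largest index with $p(n)\le t$ (using that $p$ may be taken unbounded); then $p(n)\le t<p(n+1)$ and $n\ge m_0$, so by monotonicity of $f$ and of $\sR_T$ we obtain $f(t)\le f(p(n+1))<\sR_T(p(n))\le\sR_T(t)$. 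Hence $f$ is dominated by $\sR_T$.

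The one genuinely delicate point is the \emph{uniformity}: the elementary function $p$ must not depend on $f$, only the threshold may. This is exactly what $p(x)=e(x,x)$ secures, since the $f$-dependent data enter only through the constant $c$ (equivalently $c_g$), which is absorbed into the requirement $n\ge c_g$ by monotonicity of $e$ and $\sR_T$. The remaining work --- checking that instantiation at $\num{n}$ together with quantifier contraction inflates the proof g\"odelnumber only elementarily, and that $p$ can be taken monotone and unbounded --- is routine, but it is precisely the part made cumbersome, as the paper notes, by measuring proofs through their g\"odelnumbers rather than their symbol count.
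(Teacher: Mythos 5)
Your proof is correct and follows essentially the same route as the paper: short $T$-proofs of the $\Sigma_1$-instances of totality bounded elementarily in $c$ and $n$, the diagonal $p(x)=e(x,x)$ to push the $f$-dependent constant into the domination threshold, and the re-parametrization $f(p(x+1))$ together with the inverse of $p$ for the monotone clause. The only (harmless) variations are that you bound each $f(m)$ separately and pass to $\max_{m\leqslant n}f(m)$ via monotonicity of $e$ and $\sR_T$, where the paper packages all instances into a single sentence $\theta_n$ with a common witness bound, and that your $+1$ trick makes the strict inequality required by the paper's notion of domination explicit, a point the paper's own proof glosses over.
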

\begin{proof}
Fix an arbitrary $f(x) \in \cF(T)$ and a constant $c \in \nat$ such that $T \vdash_c \forall x\, \exists y\,  \psi(x, y)$, where $\psi(x, y)$ is a $\Sigma_1$-definition of the graph of $f(x)$. Define $\hat{f}(x)$ to be $\max_{y \leqslant x} f(y)$. Note that $\hat{f}(x)$ is not necessarily provably total in $T$, however, for each $n \in \nat$ there is a short (relative to $n$) $T$-proof of  a $\Sigma_1$-sentence $\theta_n$, which naturally expresses $\exists y\, (\hat{f}(n) = y)$, e.g.,
$$
\theta_n \leftrightharpoons \exists z\, \exists y_0 \leqslant z\, \dots \exists y_n \leqslant z \left(\psi(\num{0}, y_0) \land \dots \land \psi(\num{n}, y_n) \right).
$$
Note that the least witness of $\theta_n$ is at least $\hat{f}(n)$.
Given $n \in \nat$ and the above proof of the totality of $f(x)$ one can elementarily obtain a $T$-proof of $\theta_n$. Let $q(c, n)$ be a strictly monotone elementary function $q(c, n)$ which, given an upper bound $c$ on the $T$-proof of the totality of some function $f(x)$, outputs an upper bound on the $T$-proof of a $\Sigma_1$-sentence $\theta_n$. Finally, define $p(x)$ to be $q(x, x)$. By the definition of $\sR_T(x)$ and monotonicity of $q(c, n)$ it follows that
$$
f(n) \leqslant \hat{f}(n) \leqslant \sR_T(q(c, n)) \leqslant \sR_T(p(n))
$$
holds for all $n \geqslant c$, as required.

Now, assuming $f(x) \in \cF(T)$ is monotone, apply the argument above to the monotone function $f(p(x+1)) \in \cF(T)$ (since $p(x)$ is elementary). Let $p^{-1}(x)$ to be $\max \{y \mid p(y) \leqslant x\}$ or $0$, if there is no such $y$. 
Using the definition of $p^{-1}(x)$ and monotonicity of $f(x)$, $p(x)$ and $\sR_T(x)$ we obtain
$$
f(n) \leqslant f(p(p^{-1}(n) + 1)) \leqslant \sR_T(p(p^{-1}(n))) \leqslant \sR_T(n),
$$
for all sufficiently large $n \in \nat$.

\end{proof}

The following lemma asserts the well-known fact about the class of provably total computable functions. We show that it also holds for all the classes $\cF^k_{T}(U)$.

\begin{lemma}\label{lm:Fkuniq}
For each $k \in \nat$ and each $F(x) \in \cF^k_{T}(U)$ there is a $\Sigma_1$-formula $\psi(x, y)$ defining the graph of $F(x)$ in the standard model such that
$$
U \vdash \forall x\, \Box^{(k)}_T \left(\exists ! y\, \psi(\num{x}, y)\right).
$$
\end{lemma}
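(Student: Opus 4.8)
The plan is to adapt the classical ``least-witness'' normalization of the graph of a provably total function to the boxed setting, taking care that the modified graph-formula stays genuinely $\Sigma_1$. I would fix $F(x) \in \cF^k_{T}(U)$ together with a $\Sigma_1$-formula $\psi_F(x,y) \leftrightharpoons \exists w\, \delta_F(x,y,w)$, with $\delta_F$ elementary, defining the graph of $F$ in the standard model and satisfying $U \vdash \forall x\, \Box^{(k)}_T \exists y\, \psi_F(\num{x}, y)$. Using a monotone elementary pairing I would replace $\psi_F$ by the formula that reads off the first coordinate of the least pair $(y,w)$ satisfying $\delta_F$:
$$
\psi(x, y) \leftrightharpoons \exists w\, \Bigl( \delta_F(x, y, w) \land \forall q < \langle y, w\rangle\, \neg\, \delta_F(x, (q)_0, (q)_1) \Bigr).
$$
Since the matrix is elementary and a single existential quantifier is prefixed, $\psi$ is $\Sigma_1$; and because $F(x)$ is the unique $y$ with $\exists w\, \delta_F(x,y,w)$, the least such pair has first coordinate $F(x)$, so $\psi$ defines the same graph in $\nat$.

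The argument then splits into a uniqueness part and an existence part, both carried out first in $\EA$. For uniqueness, two witnessing pairs to $\psi(x, y_1)$ and $\psi(x, y_2)$ are each least among the pairs satisfying $\delta_F(x, \cdot, \cdot)$, hence equal, so their first coordinates agree; this gives $\EA \vdash \forall x\, \forall y_1 \forall y_2\,(\psi(x, y_1) \land \psi(x, y_2) \to y_1 = y_2)$. For existence I would invoke the least-number principle for elementary formulas, available in $\EA$: if some pair satisfies $\delta_F(x, \cdot, \cdot)$ then a least one exists, whence $\EA \vdash \forall x\,(\exists y\, \psi_F(x,y) \to \exists y\, \psi(x,y))$. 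As $T \supseteq \EA$, both facts are also theorems of $T$.

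Finally I would box these $\EA$-theorems up $k$ times and combine them with the hypothesis inside $U$. Since $T$ proves both displayed statements, formalized necessitation gives $\EA$-provability of each under $\Box^{(k)}_T$ (reboxing an already boxed, hence $\Sigma_1$, statement up to $k$ boxes via provable $\Sigma_1$-completeness), and commuting the leading universal quantifier through the $k$ boxes yields
$$
\EA \vdash \forall x\, \Box^{(k)}_T\bigl(\exists y\, \psi_F(\num{x}, y) \to \exists y\, \psi(\num{x}, y)\bigr),
\qquad
\EA \vdash \forall x\, \Box^{(k)}_T\, \mathrm{Uniq}(\num{x}),
$$
where $\mathrm{Uniq}(x)$ abbreviates the uniqueness matrix. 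Distributing $\Box^{(k)}_T$ over implication and conjunction (the iterated $\mathsf{K}$-axiom, provable in $\EA$) and feeding in the hypothesis $U \vdash \forall x\, \Box^{(k)}_T \exists y\, \psi_F(\num{x}, y)$ then delivers $U \vdash \forall x\, \Box^{(k)}_T(\exists! y\, \psi(\num{x}, y))$, as required. The main obstacle I anticipate is the bookkeeping of this last step: one must check that each manipulation --- reboxing to $k$ boxes, pushing the outer quantifier inside $\Box^{(k)}_T$, and distributing $\Box^{(k)}_T$ over the propositional connectives --- is itself provable in $\EA$ uniformly in the free variable $x$, rather than merely valid as an external schema. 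The base case $k = 0$, where $\Box^{(0)}_T$ is vacuous and the claim collapses to the standard uniqueness-of-graph lemma for $\cF(U)$, should be recorded separately.
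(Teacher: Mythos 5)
Your proof is correct and follows essentially the same route as the paper's: both normalize the graph by selecting the least pair $\langle y, w\rangle$ satisfying the $\Delta_0$ matrix, obtain uniqueness provably in $\EA$ from the uniqueness of that least pair, and derive existence under $\Box^{(k)}_T$ via the least-element principle for elementary formulas. The only difference is presentational --- the paper applies the least-element principle while reasoning directly under the innermost box, whereas you prove the implication externally in $\EA$ and transport it under $\Box^{(k)}_T$ by necessitation, quantifier commutation and the K-axiom, which amounts to the same thing.
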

\begin{proof}
Assume $F(x) \in \cF^k_{T}(U)$ and fix a corresponding $\Sigma_1$-formula $\exists z\, \delta(x, y, z)$ where $\delta(x, y, z)$ is a $\Delta_0$-formula. Define $\psi(x, y)$ to be 
$$
\exists u\, \exists z \leqslant u \left(u = \langle y, z \rangle \land \delta(x, y, z) \land \forall v < u\, \forall y', z' \leqslant v \left( v = \langle y', z' \rangle \to \neg \delta(x, y, u)\right)\right).
$$ 
This formula still defines the graph of $F(x)$ in the standard model.
Note that by the definition of $\psi(x, y)$ we have
$$
\EA \vdash \forall x\, \forall y_1\, \forall y_2 \left(\psi(x, y_1) \land \psi(x, y_2) \to y_1 = y_2\right),
$$
since there is only one least (w.r.t. its code) pair satisfying $\delta(x, y, z)$ and hence only one first component $y$ of such a pair.
We have
$U \vdash \forall x\, \Box^{(k)}_T \exists y\, \exists z\, \delta(\num{x}, y, z)$, so
$$
U \vdash \forall x\, \Box^{(k)}_T \exists u\, \exists y \leqslant u\, \exists z \leqslant u \left( u = \langle y, z \rangle \land \delta(\num{x}, y, z) \right), 
$$
whence by the least element principle (under $\Box_T$ for $k > 0$ and directly in $U$ for $k = 0$, since both $T$ and $U$ contain $\EA$) we get
$U \vdash \forall x\, \Box^{(k)}_T \exists y\, \psi(x, y)$, as required.
	
\end{proof}

The following four lemmas and the corollaries are used in the proof of the main theorem (Theorem \ref{th:main}) to obtain the desired improvement of an upper bound in Adamsson's condition.

\begin{lemma}\label{lm:cumuldom}
For each $k \in \nat$, every function $G(x) \in \cF^k_{T}(U)$ is dominated by a cumulative function $F(x) \in \cF^k_{T}(U)$.
\end{lemma}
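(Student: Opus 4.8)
The plan is to apply the standard bound-with-witness pairing trick that converts an arbitrary member of $\cF^k_T(U)$ into a cumulative one, while keeping it in the same class and arranging that its output dominates both the original value and the input. First I would take $G(x) \in \cF^k_T(U)$ and, appealing to the definition of the class, fix a $\Sigma_1$-formula $\exists z\, \delta(x, y, z)$ with $\delta \in \Delta_0$ defining the graph of $G$ and satisfying $U \vdash \forall x\, \Box^{(k)}_T\, \exists y\, \exists z\, \delta(\num{x}, y, z)$. The key move is to define $F(x) := \langle x, u_x \rangle$, where $u_x$ is the least code $\langle y, z \rangle$ of a pair with $\delta(x, y, z)$ true. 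Since the pairing function is monotone, $F(x) \geqslant x$ and $F(x) \geqslant u_x \geqslant y = G(x)$, so that $F$ dominates $G$ (replacing $F$ by $F + 1$ if a strict inequality is needed).

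Next I would verify that $F(x) \in \cF^k_T(U)$, which is essentially a rerun of the argument in Lemma \ref{lm:Fkuniq}. Writing $\theta(x, u)$ for the elementary formula asserting that $u$ is the least code of a pair satisfying $\delta(x, \cdot, \cdot)$, the graph of $F$ is defined by the $\Sigma_1$-formula $\psi_F(x, w) \leftrightharpoons \exists u\, (w = \langle x, u \rangle \land \theta(x, u))$. From $U \vdash \forall x\, \Box^{(k)}_T\, \exists y\, \exists z\, \delta(\num{x}, y, z)$ one passes, under $\Box^{(k)}_T$, to the existence of a witnessing code $u$, and then, by the least element principle applied under $\Box^{(k)}_T$ (directly in $U$ when $k = 0$, since both $T$ and $U$ contain $\EA$), to $U \vdash \forall x\, \Box^{(k)}_T\, \exists w\, \psi_F(\num{x}, w)$. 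Hence $F \in \cF^k_T(U)$.

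Finally, and this is the crux, I would check that $F$ is cumulative. The algorithm for $F$ is the obvious unbounded search: on input $x$, try $u = 0, 1, 2, \dots$, decode $u = \langle y, z \rangle$, evaluate the $\Delta_0$-formula $\delta(x, y, z)$ (decidable in time elementary in $x, y, z$ via the elementary truth predicate $\mathsf{Tr}_{\Delta_0}$ — this is exactly where the restriction to $\Delta_0$ rather than $\Delta_0(\exp)$ is used), and at the first success output $\langle x, u \rangle$. The search halts at $u = u_x$, so the number of steps is at most $(u_x + 1)$ times an elementary function of $\max(x, u_x)$; since $F(x) \geqslant x$ and $F(x) \geqslant u_x$, this is elementary in $F(x)$. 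Letting $a$ be a $\phi$-index for this algorithm, $\Phi_a(x) = x + a + (\text{number of steps})$ is therefore bounded by an elementary function of $F(x)$, i.e.\ $\Phi_a \preccurlyeq_{\dom F} F$, so $F$ is cumulative. The main obstacle is precisely this last step: an honest search is cheap only relative to the size of its output, so one must pack the input $x$ into $F(x)$ to absorb the cost of reading $x$ and evaluating $\delta(x, \cdot, \cdot)$, and one must keep $\delta$ genuinely elementary; both are handled by the choice $F(x) = \langle x, u_x \rangle$ together with the $\Delta_0$-truth predicate.
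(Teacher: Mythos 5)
Your proof is correct, and it takes a genuinely different route to the same end. The paper first applies provable $\Sigma_1$-completeness under $\Box^{(k)}_T$ to pass from $\exists y\,\psi(\num{x},y)$ to $\exists p\,\exists y\leqslant p\,\Prf_T(p,\gn{\psi(\num{x},\num{y})})$, and then defines $F(n)$ to be the g{\"o}delnumber of the least $T$-proof of $\psi(\num{n},\num{m})$ for some $m$; domination holds because that proof contains the numeral $\num{m}$ (and the identification $m=G(n)$ there uses the $\Sigma_1$-soundness of $T$), while cumulativity holds because proof search is elementary in the proof found. Your construction replaces proof search by direct least-witness search on the $\Delta_0$-matrix, which makes the argument more elementary: you need neither provable $\Sigma_1$-completeness nor $\Sigma_1$-soundness, only the elementary decidability of a fixed $\Delta_0$-formula. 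The price is that you must pack the input into the output by hand, taking $F(x)=\langle x,u_x\rangle$ rather than $u_x$ alone, to absorb the cost of reading $x$ and evaluating $\delta(x,\cdot,\cdot)$ --- a point you correctly flag as the crux, and which the paper gets for free since the g{\"o}delnumber of a proof of $\psi(\num{n},\num{m})$ already majorizes $n$. Both arguments as written give only $G(n)\leqslant F(n)$ against the paper's strict notion of domination; your remark about passing to $F+1$ patches this (the paper leaves it implicit, and non-strict domination in fact suffices for the use of the lemma in Theorem \ref{th:main}).
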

\begin{proof}
Assume $G(x) \in \cF^k_{T}(U)$ and fix a $\Sigma_1$-formula $\psi(x, y)$ defining the graph of $G(x)$ such that 
$$
U \vdash \forall x\, \Box^{(k)}_T \left(\exists y\, \psi(\num{x}, y) \right).
$$
By provable $\Sigma_1$-completeness it follows that 
$U  \vdash \forall x\, \Box^{(k)}_T \left(\exists y\, \Box_T \psi(\num{x}, \num{y}) \right),
$ 
whence 
$$
U  \vdash \forall x\, \Box^{(k)}_T \left(\exists p\, \exists y \leqslant p\,  \Prf_T(p, \gn{\psi(\num{x}, \num{y})}) \right).
$$

Let $F(x)$ be the function which maps $n$ to the g{\"o}delnumber $p$ of the least $T$-proof of $\psi(\num{n}, \num{m})$ for some $m \in \nat$. The above $U$-derivation ensures that $F(x) \in \cF^k_{T}(U)$ (apply the least element principle available both under the innermost $\Box_T$ (for $k \geqslant 1$) and in $U$ (for $k = 0$) to derive the existence of the least such $p$). For each $n \in \nat$, the value $F(n)$ is a g{\"o}delnumber of a $T$-proof of $\psi(\num{n}, \num{m})$ for some $m \in \nat$. Since $\psi(x, y)$ defines the graph of $G(x)$ and $T$ is $\Sigma_1$-sound, it must be the case that $m = G(n)$, whence 
$G(n) \leqslant F(n)$, since $m$ is certainly not greater than a g{\"o}delnumber of a proof containing $\num{m}$. The computation of the value $F(n)$ consists of looking through all $T$-proofs with g{\"o}delnumbers $p \leqslant F(n)$ and checking the elementary condition $\exists y \leqslant p\,  \Prf_T(p, \gn{\psi(\num{n}, \num{y})})$ for each such proof. This complexity of this process is elementary in $F(n)$, whence the cumulativity of $F(x)$ follows.
\end{proof}

\begin{lemma}\label{lm:proofbox}	
	There is an elementary function $u(x)$ such that for all sentences $\psi, \chi$ and $n \in \nat$,
	$$
	\left(S \vdash_n \psi \to \chi\right) \lor 
	\left(S \vdash_n \psi \to \neg \chi \right) \Longrightarrow
	S \vdash_{u(n)} \Box_S(\psi \to \chi) \lor \Box_S (\psi \to \neg \chi). 
	$$
\end{lemma}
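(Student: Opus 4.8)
The plan is to reduce the statement to a bounded, proof-size-aware form of the first Löb derivability condition, and then to perform a trivial meta-level case analysis. The key tool I would isolate first is the following elementary sharpening of provable $\Sigma_1$-completeness: there is an elementary function $g(x)$ such that for every sentence $\alpha$ and every $n \in \nat$,
$$
S \vdash_n \alpha \Longrightarrow S \vdash_{g(n)} \Box_S \alpha.
$$
To see this, note that $S \vdash_n \alpha$ supplies a witness $p_0 \leqslant n$ with $\Prf_S(\num{p_0}, \gn{\alpha})$ true, and that $\gn{\alpha} \leqslant n$ (elementarily), since $\alpha$ is the final line of a proof of g\"odelnumber $\leqslant n$. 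Hence $\Prf_S(\num{p_0}, \gn{\alpha})$ is a true elementary sentence whose parameters are bounded by $n$, so it has an $\EA$-proof (thus an $S$-proof) of g\"odelnumber elementary in $n$; one existential introduction then yields a proof of $\Box_S \alpha = \exists p\, \Prf_S(p, \gn{\alpha})$ of g\"odelnumber elementary in $n$, which defines $g$.

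With $g$ in hand I would argue by cases on the meta-level disjunction in the hypothesis. If $S \vdash_n \psi \to \chi$, then $S \vdash_{g(n)} \Box_S(\psi \to \chi)$, and a single disjunction introduction produces a proof of $\Box_S(\psi \to \chi) \lor \Box_S(\psi \to \neg\chi)$; symmetrically, if $S \vdash_n \psi \to \neg\chi$, then $S \vdash_{g(n)} \Box_S(\psi \to \neg\chi)$ and disjunction introduction again yields the same sentence. In either case the second disjunct that we adjoin has g\"odelnumber elementary in $n$: from $S \vdash_n \psi \to \chi$ (resp.\ $S \vdash_n \psi \to \neg\chi$) we get $\gn{\psi}, \gn{\chi} \leqslant n$ elementarily, so both $\gn{\Box_S(\psi \to \chi)}$ and $\gn{\Box_S(\psi \to \neg\chi)}$ are elementary in $n$, whence the cost of the disjunction-introduction step is as well. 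Taking $u(x)$ to be an elementary function dominating the resulting g\"odelnumber in both cases completes the argument.

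The main point to verify carefully — and essentially the only nontrivial step — is that the proof of $\Box_S\alpha$ can be bounded elementarily in $n$ \emph{alone}, rather than in $n$ together with $\gn{\alpha}$; this is exactly where one uses that the proved formula is a subobject of its proof, so that $\gn{\alpha} \leqslant n$. Once this is granted, every remaining manipulation (existential introduction, disjunction introduction, and the final bounding) increases g\"odelnumbers only elementarily, since all formulas involved have g\"odelnumbers elementary in $n$. I note that the same conclusion can be reached without the meta-level case split, by applying the elementary $\Sigma_1$-completeness directly to the true bounded sentence $\exists q \leqslant n\,\bigl(\Prf_S(q, \gn{\psi \to \chi}) \lor \Prf_S(q, \gn{\psi \to \neg\chi})\bigr)$ and combining it with the (elementarily provable, uniformly in $q$) implication from this disjunction to $\Box_S(\psi \to \chi) \lor \Box_S(\psi \to \neg\chi)$; I would keep the case analysis as the cleaner presentation.
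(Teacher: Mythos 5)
Your proposal is correct and rests on the same key fact as the paper's proof, namely that formalized $\Sigma_1$-completeness applied to the elementary formula $\Prf_S$ yields proofs of $\Box_S\alpha$ whose g\"odelnumbers are elementary in the g\"odelnumber of the given proof of $\alpha$ (with $\gn{\psi},\gn{\chi}\leqslant n$ absorbed into the bound, exactly as the paper does via the double sum defining $u$). The only difference is presentational: the paper formalizes the disjunctive statement in $\EA$ once and extracts the elementary bound from provable totality, whereas you split into cases at the meta-level and add a disjunction introduction --- and your closing remark is precisely the paper's route.
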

\begin{proof}
	By provable $\Sigma_1$-completeness (and basic predicate logic) the following is provable in $\EA$,
	$$
	\forall p\, \forall \psi\, \forall \chi \left(\exists q \leqslant p\, 
	\Prf_S(q, \gn{\psi \to \chi}) \lor \Prf_S (q, \gn{\psi \to \neg \chi})
	\to \Box_S \left(\Box_S(\psi \to \chi) \lor \Box_S (\psi \to \neg \chi)\right) \right),
	$$
	whence the function $p(x, y, z)$ which, given g{\"o}delnumbers $y$ and $z$ of $\psi$ and $\chi$, and an upper bound $x$ on the g{\"o}delnumber  of an $S$-proof of one of implications $\psi \to \chi$ or $\psi \to \neg \chi$, outputs a g{\"o}delnumber of the least $S$-proof of the disjunction of boxes is elementary (being provably total in $\EA$). The required function $u(x)$ can be then defined as follows  
	$$
	u(x) = \sum_{y \leqslant x}\sum_{z \leqslant x} p(x, y, z), 
	$$     
	since the g{\"o}delnumbers of $\chi$ and $\psi$ are bounded by $n$, given
	$S \vdash_n \psi \to \chi$ or 
	$S \vdash_n \psi \to \neg \chi$.
\end{proof}

\begin{lemma}\label{lm:disjR}
There is a strictly monotone elementary function $v(x)$ such that 
for all $\psi, \chi$ and $n \in \nat$ we have
$$
S \vdash_n \Box_S \psi \lor \Box_S \chi \Longrightarrow
\left(S \vdash_{\sR_S(v(n))} \psi \right) \lor
\left(S \vdash_{\sR_S(v(n))} \chi \right).
$$
\end{lemma}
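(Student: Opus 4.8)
The plan is to reduce the disjunction of two boxes to a single $\Sigma_1$-sentence of the exact shape $\exists z\,\delta(z)$ demanded by the definition of $\sR_S$, and then simply read off the conclusion from the witness that $\sR_S$ supplies.

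First I would observe that $\Box_S\psi \lor \Box_S\chi$, i.e. $(\exists p\,\Prf_S(p,\gn{\psi})) \lor (\exists p\,\Prf_S(p,\gn{\chi}))$, is $\EA$-provably equivalent to the $\Sigma_1$-sentence $\exists z\,\delta_{\psi,\chi}(z)$, where $\delta_{\psi,\chi}(z)$ is the natural $\Delta_0$-rendering of $\Prf_S(z,\gn{\psi}) \lor \Prf_S(z,\gn{\chi})$ obtained by distributing $\exists$ over $\lor$, replacing each occurrence of $\exp$ by its $\Delta_0$-definition, and bounding the resulting existential quantifiers by the single leading variable $z$, exactly as in the ``typical application'' discussion preceding this section. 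This rewriting serves two purposes: the matrix $\delta_{\psi,\chi}$ is genuinely $\Delta_0$, so the definition of $\sR_S$ applies to it; and the least witness $z$ to $\exists z\,\delta_{\psi,\chi}(z)$ is never smaller than the g\"odelnumber $p$ of the corresponding $S$-proof, so a bound on $z$ transfers verbatim to a bound on $p$.

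Next I would produce the elementary function $v$. The passage from a proof of $\Box_S\psi\lor\Box_S\chi$ to a proof of $\exists z\,\delta_{\psi,\chi}(z)$ is a fixed piece of predicate- and propositional-logical manipulation, provable in $\EA$, whose proof can be assembled elementarily from the given proof together with the (numerals of the) g\"odelnumbers of $\psi$ and $\chi$. Hence there is an elementary function $g(x,a,b)$ that maps an upper bound $x$ on the g\"odelnumber of the original proof, together with g\"odelnumbers $a,b$ of $\psi,\chi$, to an upper bound on the g\"odelnumber of the resulting $S$-proof of $\exists z\,\delta_{\psi,\chi}(z)$. Since the g\"odelnumbers of $\psi$ and $\chi$ are bounded by $n$ whenever $S\vdash_n \Box_S\psi\lor\Box_S\chi$, I would set $v(x)=\sum_{a\leqslant x}\sum_{b\leqslant x} g(x,a,b)+x$ in the style of Lemma~\ref{lm:proofbox}; this $v$ is elementary, strictly monotone, and guarantees $S\vdash_{v(n)} \exists z\,\delta_{\psi,\chi}(z)$.

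Finally, applying the definition of $\sR_S$ to the $\Delta_0$-formula $\delta_{\psi,\chi}$ at the argument $v(n)$ yields some $u\leqslant \sR_S(v(n))$ with $\delta_{\psi,\chi}(u)$ true in $\nat$; by the witness-bounding remark this gives a g\"odelnumber $p\leqslant u\leqslant \sR_S(v(n))$ with $\Prf_S(p,\gn{\psi})$ or $\Prf_S(p,\gn{\chi})$, that is, $S\vdash_{\sR_S(v(n))}\psi$ or $S\vdash_{\sR_S(v(n))}\chi$, as required. I expect the only genuine work to lie in the $\Delta_0$-bookkeeping of the first two steps, namely making precise that the leading witness dominates the proof g\"odelnumber and that the conversion of proofs is elementary and uniform in $\psi,\chi$, while the appeal to $\sR_S$ and the summation trick are routine.
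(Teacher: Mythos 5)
Your proposal is correct and follows essentially the same route as the paper: convert the proof of $\Box_S\psi\lor\Box_S\chi$ into a proof of a single $\Sigma_1$-sentence $\exists z\,\delta_{\psi,\chi}(z)$ with $\Delta_0$ matrix whose least witness dominates the proof g\"odelnumber, bound the conversion by a strictly monotone elementary function, and apply the definition of $\sR_S$. The paper's version is terser (it simply declares the conversion ``an elementary task'' and takes any strictly monotone elementary majorant such as $2^x_k$), while you make explicit the $\Delta_0$-rewriting and the summation trick for uniformity in $\psi,\chi$; these are exactly the details the paper delegates to the remark following the definition of $\sR_T(x)$.
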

\begin{proof}
Let $v'(x)$ be the function which transforms the proof 
$\Box_S \psi \lor \Box_S \chi$ into the proof of 
$$
\exists p \left(\Prf_T(p, \gn{\psi}) \lor \Prf_T(p, \gn{\chi})\right),
$$ 
which is an elementary task. Define $v(x)$ to be some strictly monotone elementary upper bound on $v'(x)$, e.g., $v(x) = 2^x_k$ for some fixed $k \in \nat$. The required implication then follows by the definition of  $\sR_S(x)$ and the remark concerning the use of $\Delta_0$-formulas
rather than elementary formulas in it.
\end{proof}

\begin{lemma}\label{lm:fRltRc}
There is an elementary function $w(x, y)$ such that 
for each $f(x) \in \cF(T)$ there is a constant $c \in \nat$ such that
$$
f(\sR_S(n)) \leqslant \sR_S(w(c, n))
$$
holds for all $n \in \nat$. 
\end{lemma}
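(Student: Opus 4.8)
The plan is to use the definition of $\sR_S(x)$ directly: for each $n$ I will manufacture a single $\Sigma_1$-sentence $\eta_n$ that has an $S$-proof of gödelnumber elementary in $n$ and whose least witness is at least $f(\sR_S(n))$. The bound $f(\sR_S(n))\leqslant\sR_S(w(c,n))$ then falls out immediately from the defining property of $\sR_S$, namely that $\sR_S(m)$ dominates the least witnesses of all $\Sigma_1$-sentences with an $S$-proof of gödelnumber $\leqslant m$.

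First I would fix $f(x)\in\cF(T)$, a $\Sigma_1$-definition $\exists z\,\psi_0(x,y,z)$ (with $\psi_0\in\Delta_0$) of its graph, and a constant $c$ bounding the gödelnumber of an $S$-proof of the totality statement $\forall x\,\exists y\,\exists z\,\psi_0(x,y,z)$; this is the one place where the provable totality of $f$ is used. Next I would unwind $\sR_S(n)$: by its definition the value is attained by some $\Delta_0$-formula $\delta_n(\cdot)$ with $S\vdash_n\exists z\,\delta_n(z)$ whose least witness is exactly $\sR_S(n)$, and such a $\delta_n$ exists since only formulas of gödelnumber $\leqslant n$ are relevant. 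I then form
$$
\eta_n\leftrightharpoons\exists z\,\exists y\,\bigl(\delta_n(z)\wedge\forall z'<z\,\neg\delta_n(z')\wedge\exists w\,\psi_0(z,y,w)\bigr).
$$
The minimality conjunct forces $z$ to be the \emph{least} $\delta_n$-witness, i.e.\ $z=\sR_S(n)$; this clause is essential because $f$ need not be monotone, so without it a larger $\delta_n$-witness could yield a smaller $y$. With it, the least witness of $\eta_n$ codes the pair $\langle\sR_S(n),f(\sR_S(n))\rangle$ and is therefore $\geqslant f(\sR_S(n))$.

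The key step is to bound the size of an $S$-proof of $\eta_n$. From the $S$-proof of $\exists z\,\delta_n(z)$ of gödelnumber $\leqslant n$ one obtains, by the least number principle for $\Delta_0$-formulas (available already in $\EA$), a proof of $\exists z\,(\delta_n(z)\wedge\forall z'<z\,\neg\delta_n(z'))$ with only an elementary increase in size. Conjoining this with the instance of the totality statement of gödelnumber $\leqslant c$ yields $\eta_n$ by pure propositional and quantifier manipulation. Crucially, the witness $z=\sR_S(n)$ is never substituted as a numeral: everything stays under the existential quantifier on $z$, so the proof size is elementary in $n$ and $c$. Taking $w(c,n)$ to be a strictly monotone elementary upper bound on this size, the definition of $\sR_S$ gives that the least witness of $\eta_n$ is $\leqslant\sR_S(w(c,n))$, and combining with the lower bound from the previous paragraph yields $f(\sR_S(n))\leqslant\sR_S(w(c,n))$ for all $n$.

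I expect the proof-size control in this last step to be the main obstacle. The naive route — compute $\sR_S(n)$, substitute its numeral into the totality statement, and prove $\exists y\,\exists w\,\psi_0(\num{\sR_S(n)},y,w)$ — would cost a proof of size comparable to $\sR_S(n)$ rather than elementary in $n$, and would only deliver the useless bound $\sR_S(p(\sR_S(n)))$ with a nested application of $\sR_S$. Keeping $z$ existentially quantified while still pinning it to the least $\delta_n$-witness through the $\Delta_0$ minimality clause is precisely what permits a single application of $\sR_S$ on the right-hand side, and verifying that this combined proof is genuinely elementary in $n$ (and that the least witness of $\eta_n$ really equals $f$ evaluated at $\sR_S(n)$) is the technical heart of the argument.
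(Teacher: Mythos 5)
Your proof is correct and takes essentially the same route as the paper's: it likewise picks a $\Delta_0$-formula attaining $\sR_S(n)$, forms the $\Sigma_1$-sentence asserting that $f$ of its least witness is defined (your $\eta_n$ is the paper's $\sigma$), and bounds the proof of that sentence by concatenating the short proof of the minimized-witness statement with the fixed totality proof, then monotonizing to get $w$. The only cosmetic difference is that the paper writes the sentence with a single outer quantifier $\exists v$ bounding the inner ones, so that it is literally of the form $\exists z\,\delta(z)$ with $\delta\in\Delta_0$ as required by the definition of $\sR_S$.
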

\begin{proof}
Fix a $\Sigma_1$-formula $\exists z\, \delta(x, y, z)$, which defines
the graph of $f(x)$ in the standard model, where $\delta(x, y, z)$ is 
a $\Delta_0$-formula and
$$
S \vdash \forall x\, \exists ! y\, \exists z\, \delta(x, y, z).
$$
Fix $n \in \nat$ and let $\sR_S(n) = m$. Fix a $\Sigma_1$-sentence $\exists x\, \psi(x)$, where $\psi(x)$ is a $\Delta_0$-formula, such that 
$S \vdash_n \exists x\, \psi(x)$ and the least witness for this sentence is $m$.  Since $S$ extends $\EA$, it follows that $S \vdash \exists x\, \left( \psi(x) \land \forall u < x\, \neg \psi(u) \right)$, and the g{\"o}delnumber of this proof is elementary in that of~$\exists x\, \psi(x)$.

Our aim is to construct a short (relative to $n$) $S$-proof of a $\Sigma_1$-sentence with the least witness greater than $f(m)$. 
Define a $\Sigma_1$-sentence $\sigma$ as follows
$$
\sigma \leftrightharpoons \exists v\, \exists x, y, z \leqslant v
\left(\psi(x) \land \forall u < x\, \neg \psi(u) \land \delta(x, y, z)\right).
$$
Informally, $\sigma$ asserts that the value $f(m)$, where $m$ is the least witness for $\exists x\, \psi(x)$, is defined. It follows then that the least witness for $\sigma$ is at least $f(m)$.

The required $S$-proof of $\sigma$ is obtained as follows: concatenate the proof of $\forall x\, \exists y\, \exists z\, \delta(x, y, z)$ (the totality of $f(x)$) with the proof of $\exists x \left(\psi(x) \land \forall u < x\, \neg \psi(u) \right)$ and derive $\sigma$ using basic predicate logic. The g{\"o}delnumber $w'(c, n)$ of this proof is elementary in 
g{\"o}delnumbers of the proofs of the totality of $f(x)$ and $\sigma$. As usual let $w(x, y) = \sum_{u \leqslant x}\sum_{v \leqslant y} w'(u, v)$.
It follows that $S \vdash_{w(c, n)} \sigma$ and 
$f(\sR_S(n)) \leqslant \sR_S(w(c, n))$ for all $n \in \nat$. 
\end{proof}

\begin{corollary}\label{cor:fRltRc}
There is a strictly monotone elementary function $h(x)$ such that for each
$f(x) \in \cF(T)$ the inequality
$$
f(\sR_S(n)) \leqslant \sR_S(h(n))
$$
holds for all sufficiently large $n \in \nat$. 
More generally, for each $k \in \nat$ and each $f(x) \in \cF(T)$ we have
$$
f(\sR^{(k+1)}_S(n)) \leqslant \sR^{(k+1)}_S(h(n))
$$
for all sufficiently large $n \in \nat$. 
\end{corollary}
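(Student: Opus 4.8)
The plan is to deduce both inequalities from Lemma \ref{lm:fRltRc} by absorbing the $f$-dependent constant $c$ into a single function $h$, at the cost of replacing ``all $n$'' by ``all sufficiently large $n$''. The mechanism is diagonalization: the function $w(x,y)$ supplied by Lemma \ref{lm:fRltRc} is, from its definition as the double sum $\sum_{u\leqslant x}\sum_{v\leqslant y} w'(u,v)$, monotone in each argument, so its diagonal $w(x,x)$ dominates $w(c,x)$ for every fixed $c$ once $x\geqslant c$. I would therefore fix $h(x)$ to be a strictly monotone elementary upper bound on $w(x,x)$ (for instance $w(x,x)+2^x$), exactly as $v(x)$ was chosen in Lemma \ref{lm:disjR}.

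For the first inequality, fix $f\in\cF(T)$ and let $c\in\nat$ be the constant provided by Lemma \ref{lm:fRltRc}. For every $n\geqslant c$, monotonicity of $w$ in its first argument gives $w(c,n)\leqslant w(n,n)\leqslant h(n)$, and then monotonicity of $\sR_S$ (already invoked in Proposition \ref{prop:Rdomprovtot}) yields
$$
f(\sR_S(n)) \leqslant \sR_S(w(c,n)) \leqslant \sR_S(h(n)),
$$
which is the assertion for all sufficiently large $n$.

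For the general statement I would argue by induction on $k\geqslant 1$ that $f(\sR^{(k)}_S(n))\leqslant \sR^{(k)}_S(h(n))$ for all sufficiently large $n$, the base case $k=1$ being the inequality just proved. The crucial observation for the induction step is that $h$, being elementary, lies in $\cF(T)$ (as $T$ extends $\EA$), so the base case applied to $h$ itself gives the commutation estimate $h(\sR_S(n))\leqslant \sR_S(h(n))$ for large $n$. Writing $\sR^{(k+1)}_S(n)=\sR^{(k)}_S(\sR_S(n))$ and applying the induction hypothesis at the argument $\sR_S(n)$ — which tends to infinity with $n$, since $\sR_S$ is unbounded, dominating the identity by Proposition \ref{prop:Rdomprovtot} — I obtain
$$
f(\sR^{(k+1)}_S(n)) \leqslant \sR^{(k)}_S(h(\sR_S(n))) \leqslant \sR^{(k)}_S(\sR_S(h(n))) = \sR^{(k+1)}_S(h(n)),
$$
where the second inequality combines monotonicity of $\sR^{(k)}_S$ with the commutation estimate.

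The argument is essentially bookkeeping once the right $h$ is chosen; the only genuine idea is the diagonalization that makes $h$ independent of $f$, together with the small but essential point that feeding the elementary $h$ back into the base inequality lets one slide $h$ through $\sR_S$ in the inductive step. The main thing to watch is the ``for sufficiently large $n$'' quantifier in the nested application: one must guarantee that $\sR_S(n)$ is eventually large enough for the induction hypothesis to apply, which is precisely why the unboundedness of $\sR_S$ is needed.
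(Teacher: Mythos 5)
Your proof is correct and follows essentially the same strategy as the paper: absorb the $f$-dependent constant from Lemma \ref{lm:fRltRc} into a single monotone elementary $h$ at the price of ``sufficiently large $n$'', then induct on $k$ using the fact that $h$, being elementary, is itself in $\cF(T)$ and hence commutes past $\sR_S$ up to the error absorbed by $h$. The only (immaterial) difference is that you peel off the innermost $\sR_S$ in the inductive step (applying the induction hypothesis to $f$ at the point $\sR_S(n)$ and the base case to $h$), whereas the paper peels off the outermost one (applying the base case to $f$ at the point $\sR^{(k+1)}_S(n)$ and the induction hypothesis to $h$); both rest on the same monotonicity and unboundedness facts.
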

\begin{proof}
Let	$w(x, y)$ be an elementary function constructed in Lemma 
\ref{lm:fRltRc}. Let $h(x) = 2^{2x}_m$, where $m \in \nat$ is such that
$w(x, y) \leqslant 2^{x + y}_m$ for all $x, y \in \nat$. Using monotonicity of $\sR_S(x)$ we get 
$$
f(\sR_S(n)) \leqslant \sR_S(w(c, n)) \leqslant \sR_S(2^{c + n}_m) 
\leqslant \sR_S(h(n))
$$
for all $n \geqslant c$, as required. The general case is then proved by induction on $k$, where the base case $k = 0$ has been just proved above.

Assume that for each $f(x) \in \cF(T)$ there is some $c \in \nat$ such that $f(\sR^{(k+1)}_S(n)) \leqslant \sR^{(k+1)}_S(h(n))$ holds
for all  $n \geqslant c$. Fix an arbitrary function $g(x) \in \cF(T)$. We aim at proving that
$$
g(\sR^{(k+2)}_S(n)) \leqslant \sR^{(k+2)}_S(h(n))
$$
holds for all sufficiently large $n$. Firstly, by the base case, we have $g(\sR_S(n)) \leqslant \sR_S(h(n))$ for all sufficiently large $n$, whence
$$
g(\sR^{(k+2)}_S(n)) = g(\sR_S(\sR^{(k+1)}_S(n))) \leqslant
\sR_S(h(\sR^{k+1}_S(n))),
$$
for all sufficiently large $n$, since is $\sR^{(k+1)}_S(x)$ monotone 
and unbounded. Now, applying the induction hypothesis to $h(x) \in \cF(T)$ (since $h(x)$ is elementary and $T$ extends $\EA$) we get
$$
h(\sR^{(k+1)}_S(n)) \leqslant \sR^{(k+1)}_S(h(n))
$$
for all sufficiently large $n$, whence, by monotonicity of $\sR_S(x)$, we obtain
$$
g(\sR^{(k+2)}_S(n)) \leqslant \sR_S(h(\sR^{(k+1)}_S(n))) \leqslant \sR^{(k+2)}_S(h(n))
$$
for all sufficiently large $n$, as required. 
\end{proof}

Below we prove several properties of the classes $\cF_{T}(U)$ (we consider only the case $k = 1$) similar to that of the class of provably total computable functions.
\begin{lemma}\label{lm:pi1external}
	If $X$ is any set of true $\Pi_1$-sentences, then
	$
	\cF_{T}(U) = \cF_{T}(U + X).
	$
\end{lemma}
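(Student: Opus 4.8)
The plan is to prove the nontrivial inclusion $\cF_T(U + X) \subseteq \cF_T(U)$, the reverse being immediate since $U + X$ extends $U$ deductively. So I would fix $f \in \cF_T(U + X)$ together with a $\Sigma_1$-formula $\psi(x, y)$ defining its graph in $\nat$ with $U + X \vdash \forall x\, \Box_T \exists y\, \psi(\num{x}, y)$. As any derivation uses only finitely many axioms, there are $\pi_1, \dots, \pi_m \in X$ with $U + (\pi_1 \wedge \dots \wedge \pi_m) \vdash \forall x\, \Box_T \exists y\, \psi(\num{x}, y)$; writing $\pi := \pi_1 \wedge \dots \wedge \pi_m$, this $\pi$ is again a true $\Pi_1$-sentence, say $\pi = \forall w\, \theta(w)$ with $\theta \in \Delta_0$, so that $\neg\pi$ is (equivalent to) the $\Sigma_1$-sentence $\exists w\, \neg\theta(w)$. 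By the deduction theorem and pure logic (as $\pi$ is closed) this yields the disjunction
$$
U \vdash \forall x\, \big( \neg\pi \lor \Box_T \exists y\, \psi(\num{x}, y)\big).
$$

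The idea is then to absorb the ``bad'' disjunct $\neg\pi$ into the $\Sigma_1$-definition of $f$. I would replace $\psi$ by the $\Sigma_1$-formula
$$
\psi'(x, y) := \psi(x, y) \lor \big( y = 0 \land \exists w\, \neg\theta(w)\big).
$$
Since $\pi$ is true, its right disjunct fails in $\nat$, so $\psi'$ still defines the graph of $f$ in the standard model. It now suffices to show that \emph{each} disjunct of the displayed formula entails $\Box_T \exists y\, \psi'(\num{x}, y)$ provably in $U$. For the disjunct $\Box_T \exists y\, \psi(\num{x}, y)$ this is immediate from monotonicity of $\Box_T$: since $T$ proves $\psi(\num{x}, y) \to \psi'(\num{x}, y)$ uniformly in $x$, the second derivability condition gives $\EA \vdash \forall x\, \big( \Box_T \exists y\, \psi(\num{x}, y) \to \Box_T \exists y\, \psi'(\num{x}, y)\big)$.

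The key point is the treatment of the disjunct $\neg\pi$, and here I would exploit that $\neg\pi$ is $\Sigma_1$. By provable $\Sigma_1$-completeness $\EA \vdash \neg\pi \to \Box_T \neg\pi$, and since $T$ proves, uniformly in $x$, the logical implication $\neg\pi \to \psi'(\num{x}, \num{0})$, the derivability conditions give $\EA \vdash \forall x\, \Box_T\big( \neg\pi \to \exists y\, \psi'(\num{x}, y)\big)$; chaining these two facts by distribution yields
$$
\EA \vdash \forall x\, \big( \neg\pi \to \Box_T \exists y\, \psi'(\num{x}, y)\big).
$$
Combining the two cases with the disjunction from the first paragraph gives $U \vdash \forall x\, \Box_T \exists y\, \psi'(\num{x}, y)$, that is, $f \in \cF_T(U)$, as required.

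I expect the only genuinely delicate point to be the handling of $\neg\pi$, so let me highlight it. The familiar analogue for the plain class $\cF(U)$ is proved by an ``escape clause'': one bounds the search for a witness of $\psi$ by the least place where $\theta$ could fail, which works because the reasoning theory may hypothetically entertain a failure of $\pi$. That device does \emph{not} transfer to the boxed setting, since inside $\Box_T$ the theory $T$ will never follow the branch on which $\pi$ fails. The resolution is exactly the observation above: a hypothetical failure of $\pi$ is $\Sigma_1$, hence itself $\Box_T$-provable, and can therefore be routed into a $\Box_T$-provable instance of the amended graph $\psi'$. Everything else is routine manipulation with the L\"ob derivability conditions. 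The same argument works verbatim with $\Box_T$ replaced by $\Box^{(k)}_T$, giving $\cF^k_T(U) = \cF^k_T(U + X)$ for every $k$.
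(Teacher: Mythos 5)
Your proof is correct and follows essentially the same route as the paper: extract a finite true $\Pi_1$-conjunction $\pi$ of the axioms from $X$ used in the derivation, exploit that $\neg\pi$ is $\Sigma_1$ together with provable $\Sigma_1$-completeness to handle the case $\neg\pi$ under $\Box_T$, and absorb $\neg\pi$ into a modified $\Sigma_1$ graph formula that still defines $f$ because $\pi$ is true. The only cosmetic difference is your choice of $\psi(x,y)\lor(y=0\land\neg\pi)$ where the paper uses $\pi\to\psi_f(x,y)$; both work identically.
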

\begin{proof}
	Let $\psi_f(x, y)$ be a $\Sigma_1$-formula defining the graph of $f(x)$ such that
	$$
	U + X \vdash \forall x\, \Box_T \left(\exists y\, \psi_f(\num{x}, y)\right).
	$$
	Let $\pi$ be a conjunction of finitely many $\Pi_1$-sentences from $X$ used in this proof. We then have
	\begin{align*}
	U  \vdash \pi & \to \forall x\, \Box_T \left(\exists y\, \psi_f(\num{x}, y)\right)\\
	& \to  \forall x\, \Box_T \left(\pi \to \exists y\, \psi_f(\num{x}, y)\right).
	\end{align*}
	But since $\neg \pi$ is (provably equivalent to) a $\Sigma_1$-sentence, by provable $\Sigma_1$-completeness we obtain
	\begin{align*}
	U  \vdash \neg \pi & \to \Box_T \neg \pi\\
	& \to  \forall x\, \Box_T \left(\pi \to \exists y\, \psi_f(\num{x}, y)\right).
	\end{align*}
	Thus, we have $U \vdash \forall x\, \Box_T\exists y \left(\pi \to \psi_f(\num{x}, y)\right)$, whence we obtain $f(x) \in \cF_{T}(U)$, since the formula $\pi \to \psi_f(x, y)$ still defines the graph of $f(x)$ 
	and is provably equivalent to $\Sigma_1$-formula, because $\pi$ is a true $\Pi_1$-sentence.
\end{proof}

We say that $S$ is a \emph{$\Pi_1$-extension of} $T$ if
$
\mathsf{Ax}_U(x) \leftrightharpoons \mathsf{Ax}_T(x) \lor \delta(x),
$
where $\delta(x)$ is an elementary formula such that $\nat \models 
\delta(\num{n})$ implies $n = \gn{\pi}$ for some $\pi \in \Pi_1$.

\begin{lemma}\label{lm:pi1extF}
	If $S$ is a $\Pi_1$-extension of $T$, then
	$
	\cF_{S}(U) = \cF_{T}(U).
	$
\end{lemma}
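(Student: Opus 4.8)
The plan is to establish the two inclusions separately; the inclusion $\cF_T(U)\subseteq\cF_S(U)$ is immediate, while $\cF_S(U)\subseteq\cF_T(U)$ carries all the content. For the easy inclusion, the defining equivalence $\mathsf{Ax}_S(x)\leftrightharpoons\mathsf{Ax}_T(x)\lor\delta(x)$ gives $\EA\vdash\forall x\,(\Box_T x\to\Box_S x)$, since every $T$-proof is verbatim an $S$-proof; hence from $U\vdash\forall x\,\Box_T\exists y\,\psi_f(\num x,y)$ one obtains $U\vdash\forall x\,\Box_S\exists y\,\psi_f(\num x,y)$ with the very same $\Sigma_1$-definition, so $\cF_T(U)\subseteq\cF_S(U)$.

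For the converse I would first record two facts. First, since $S$ is $\Sigma_1$-sound and every sentence $\pi$ with $\nat\models\delta(\gn{\pi})$ is an axiom of $S$, hence $S$-provable, each such $\pi$ is a \emph{true} $\Pi_1$-sentence; consequently any finite conjunction of them is again a true $\Pi_1$-sentence. Second, the formalized deduction theorem yields, provably in $\EA$,
$$\Box_S\theta\eqv\exists\pi\,\bigl(\mathsf{Conj}_\delta(\pi)\land\Box_T(\pi\to\theta)\bigr),$$
where $\mathsf{Conj}_\delta(\pi)$ is the elementary predicate asserting that $\pi$ codes a conjunction of sentences each satisfying $\delta$ (I let the empty conjunction $\top$ qualify, so that a witness always exists). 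Applying this to $f\in\cF_S(U)$ with its $\Sigma_1$-graph $\psi_f$ turns $U\vdash\forall x\,\Box_S\exists y\,\psi_f(\num x,y)$ into
$$U\vdash\forall x\,\exists\pi\,\bigl(\mathsf{Conj}_\delta(\pi)\land\Box_T(\pi\to\exists y\,\psi_f(\num x,y))\bigr).$$

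The heart of the argument is to convert this into a single $\Box_T$-totality statement, and the delicate point is that I must do so without ever asking $U$ (or $T$) to certify the truth of $\pi$. The device, in the spirit of the absorption used for Lemma~\ref{lm:pi1external}, is to fold the deduction-theorem reduction into the defining formula itself. I would set
$$\psi'(x,y):\equiv\exists\pi\,\bigl(\mathsf{Conj}_\delta(\pi)\land(\mathsf{Tr}_{\Pi_1}(\pi)\to\psi_f(x,y))\land(\neg\mathsf{Tr}_{\Pi_1}(\pi)\to y=0)\bigr),$$
which is a $\Sigma_1$-formula. Arguing in $U$ and fixing a witness $\pi$ for the displayed statement, provable $\Sigma_1$-completeness gives $\Box_T\mathsf{Conj}_\delta(\num\pi)$, while the defining property $\EA\vdash\forall\pi\in\Pi_1\,\Box_T(\pi\eqv\mathsf{Tr}_{\Pi_1}(\pi))$ of the truth predicate gives $\Box_T(\mathsf{Tr}_{\Pi_1}(\num\pi)\to\pi)$. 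Combining these with $\Box_T(\pi\to\exists y\,\psi_f(\num x,y))$ and performing inside $T$ the trivial case split on $\mathsf{Tr}_{\Pi_1}(\num\pi)$ (take the $\psi_f$-witness in the positive case and $y=0$ in the negative one) yields $\Box_T\exists y\,\psi'(\num x,y)$, so $U\vdash\forall x\,\Box_T\exists y\,\psi'(\num x,y)$. Finally, the first recorded fact is invoked \emph{externally}: since every $\pi$ with $\mathsf{Conj}_\delta(\pi)$ codes a true $\Pi_1$-sentence, the clause $\neg\mathsf{Tr}_{\Pi_1}(\pi)\to y=0$ never fires in $\nat$ and the witness $\pi=\gn{\top}$ collapses $\psi'(x,y)$ to $\psi_f(x,y)$; hence $\psi'$ is a legitimate $\Sigma_1$-graph of $f$ and $f\in\cF_T(U)$.

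The main obstacle, around which the whole construction is designed, is the \emph{non-uniformity} of the deduction theorem: it supplies, for each $x$, some conjunction $\pi$ of added axioms whose truth $U$ cannot certify uniformly in $x$ (that would amount to uniform $\Pi_1$-reflection over the added axioms, which $U$ need not possess). By relativising everything to the internally quantified $\pi$ through $\mathsf{Tr}_{\Pi_1}$, the $T$-side proof only needs the tautologous case split, and the genuine truth of the axioms is used solely in the external check that $\psi'$ defines $f$ on $\nat$; this decoupling of the two roles is exactly what makes the argument go through.
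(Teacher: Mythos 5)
Your overall strategy coincides with the paper's: reduce $\Box_S$ to $\Box_T$ by the formalized deduction theorem and absorb the conjunction of added axioms into the $\Sigma_1$-graph via $\mathsf{Tr}_{\Pi_1}$. But there is a genuine gap at the central step. From a witness $\pi$ with $\mathsf{Conj}_\delta(\pi)$ you infer $\Box_T(\mathsf{Tr}_{\Pi_1}(\num{\pi})\to\pi)$ by appeal to $\EA\vdash\forall\pi\in\Pi_1\,\Box_T(\pi\eqv\mathsf{Tr}_{\Pi_1}(\pi))$; applying this to the \emph{internally quantified} $\pi$ requires $U$ to prove that $\pi$ is (provably equivalent to) a $\Pi_1$-sentence. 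All that $\mathsf{Conj}_\delta(\pi)$ yields is that each conjunct satisfies $\delta$, and the implication from $\delta(\gn{\pi})$ to $\pi\in\Pi_1$ is guaranteed only in the standard model by the definition of $\Pi_1$-extension: it is itself a true $\Pi_1$-sentence that $U$ need not prove (one can choose $\delta$ so that this implication entails $\Con(\EA)$ while the extension condition holds vacuously in $\nat$, so nonstandard witnesses of $\delta$ cannot be ruled out in $U$). Hence your claim to be ``arguing in $U$'' does not go through as stated. The paper closes exactly this hole by adjoining the true $\Pi_1$-sentence $\chi:=\forall\pi\,(\delta(\gn{\pi})\to\pi\in\Pi_1)$ to $U$, running the argument in $U+\chi$, and then invoking Lemma~\ref{lm:pi1external} to get $\cF_T(U+\chi)=\cF_T(U)$. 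You cite the ``spirit'' of that lemma, but here it is needed as an actual lemma, not as an analogy; with $\chi$ added your derivation becomes correct and is essentially the paper's.

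A second, smaller defect: your $\psi'$ is not a $\Sigma_1$-formula. The conjunct $\neg\mathsf{Tr}_{\Pi_1}(\pi)\to y=0$ is equivalent to $\mathsf{Tr}_{\Pi_1}(\pi)\lor y=0$, which is $\Pi_1$, so the matrix is a conjunction of a $\Sigma_1$- and a $\Pi_1$-formula and the prefixed $\exists\pi$ puts $\psi'$ in $\Sigma_2$. Since membership in $\cF_T(U)$ requires a $\Sigma_1$ graph, $\psi'$ as written is inadmissible. Fortunately that conjunct does no work: deleting it, your $T$-internal case split still produces a witness (in the negative case any $y$ satisfies the remaining implication vacuously), the external verification that the formula defines $f$ is unchanged, and the result is essentially the paper's $\theta(x,y)$. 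Two further minor points: your deduction-theorem ``equivalence'' should be stated as an implication only, since the converse direction requires $\Sigma_1$-collection as in Lemma~\ref{lm:boxkstand} (you use only the forward direction, so no harm done); and one must also say a word about converting the coded conjunction of $\Pi_1$-sentences into a genuine $\Pi_1$-sentence before feeding it to $\mathsf{Tr}_{\Pi_1}$, which again uses $\chi$ inside $U$.
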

\begin{proof}
	Let $\psi_f(x, y)$ be a $\Sigma_1$-formula defining the graph of $f(x)$ such that
	$$
	U \vdash \forall x\, \Box_S \left(\exists y\, \psi_f(\num{x}, y)\right).
	$$
	Let $\chi$ be a true (since $S$ is a $\Pi_1$-extension of $T$) $\Pi_1$-sentence $\forall \pi \left(\delta(\gn{\pi}) \to \pi \in \Pi_1\right)$.
	We show that $f(x) \in \cF_{T}(U + \chi)$, whence the result follows by Lemma \ref{lm:pi1external}.
	Using the formalized deduction theorem, provable $\Sigma_1$-completeness, $\chi$, and the fact that finite conjunction of $\Pi_1$-sentences is provably equivalent to a $\Pi_1$-sentence together with $\EA \vdash \forall \pi \in \Pi_1\, \Box_T \left(\pi \eqv \mathsf{Tr}_{\Pi_1}(\pi)\right)$, we derive
	\begin{align*}
	U + \chi &\vdash 
	\forall x\, \exists \pi\, \exists z \left[ 
	\forall i < z\, \delta(\gn{\pi_i})
	\land 
	\Box_T \left(
	\bigwedge_{i < z} \pi_i \to \exists y\, \psi_f(\num{x}, y)
	\right)\right]\\
	&\vdash 
	\forall x\, \exists \pi\, \exists z \left[\forall i < z\, \pi_i \in \Pi_1 \land \Box_T \left[ 
	\forall i < \num{z}\, \delta(\gn{\pi_i})
	\land 
	\left( 
	\bigwedge_{i < z} \pi_i \to \exists y\, \psi_f(\num{x}, y)
	\right)\right] \right]\\
	&\vdash 
	\forall x\, \exists \pi\, \exists z\, \Box_T \Bigl[ 
	\forall i < \num{z}\, \delta(\gn{\pi_i})
	\land 
	\Bigl( 
	\mathsf{Tr}_{\Pi_1}(\gn{\bigwedge_{i < z} \pi_i}) \to \exists y\, \psi_f(\num{x}, y)
	\Bigr)\Bigr]\\
	&\vdash 
	\forall x\,  \Box_T \exists \pi\, \exists z\, \Bigl[ 
	\forall i < z\, \delta(\gn{\pi_i})
	\land 
	\Bigl( 
	\mathsf{Tr}_{\Pi_1}(\gn{\bigwedge_{i < z} \pi_i}) \to \exists y\, \psi_f(\num{x}, y)
	\Bigr)\Bigr]\\
	&\vdash \forall x\, \Box_T \left(\exists y\, 
	\theta(\num{x}, y)
	\right),
	\end{align*}
	where $\theta(x, y) \leftrightharpoons
	\exists \pi\, \exists z\, \Bigl[ 
	\forall i < z\, \delta(\gn{\pi_i})
	\land 
	\Bigl( 
	\mathsf{Tr}_{\Pi_1}(\gn{\bigwedge_{i < z} \pi_i}) \to
	\psi_f(x, y)
	\Bigr)\Bigr]$ 
	is (provably equivalent to) a $\Sigma_1$-formula and still defines the graph of $f(x)$, since $S$ is an extension of $T$ with true $\Pi_1$-sentences.
\end{proof}

Lemma \ref{lm:pi1external} also allows to obtain the following alternative definition of the class $\cF_{T}(U)$.
\begin{proposition}
$f(x_1, \dots, x_m) \in \cF_{T}(U)$ if and only if there is a $\Sigma_1$-formula $\psi_f(x_1, \dots, x_m, y)$ defining the graph of $f$ such that  
the function which maps $n_1, \dots, n_m$ to the least g{\"o}delnumber of a $T$-proof
of $\exists y\, \psi_f(\num{n_1}, \dots, \num{n_m}, y)$
is provably total in $U$.
\end{proposition}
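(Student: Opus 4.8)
The plan is to prove both directions by moving between the $\Sigma_1$-statement $\Box_T \exists y\, \psi_f(\num{x}, y)$ and the ``least proof'' function $g$ that the proposition talks about, handling the multivariate case exactly as the unary one. For the forward implication, suppose $f \in \cF_T(U)$, witnessed by a $\Sigma_1$-formula $\psi_f(x,y)$ with $U \vdash \forall x\, \Box_T \exists y\, \psi_f(\num{x}, y)$, i.e.\ $U \vdash \forall x\, \exists p\, \Prf_T(p, \gn{\exists y\, \psi_f(\num{x}, y)})$. I would then apply the least number principle (available in $U$, since it extends $\EA$) to the elementary predicate $\Prf_T(p, \gn{\exists y\, \psi_f(\num{x}, y)})$, obtaining that $U$ proves the totality of the function $g(x)$ with elementary graph $\Prf_T(p, \gn{\exists y\, \psi_f(\num{x}, y)}) \land \forall q < p\, \neg\Prf_T(q, \gn{\exists y\, \psi_f(\num{x}, y)})$. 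This $g$ is exactly the least-proof function of the statement, and it is total in $\nat$ because each true $\Sigma_1$-sentence $\exists y\, \psi_f(\num{n}, y)$ is provable in $T \supseteq \EA$; hence $g$ is provably total in $U$.

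The converse is where the real work lies. Assume $\psi_f$ defines the graph of $f$ and that $g$ is provably total in $U$. The naive move is to read $U \vdash \forall x\, \Box_T \exists y\, \psi_f(\num{x}, y)$ straight off a $\Sigma_1$-definition of $g$, and the obstacle I expect to be the crux is that the $\Sigma_1$-definition $\theta(x,p)$ witnessing $g \in \cF(U)$ need not be the canonical least-proof predicate, so $U$ has no a priori reason to know that a $\theta$-witness $p$ really satisfies $\Prf_T(p, \gn{\exists y\, \psi_f(\num{x}, y)})$. I would close this gap with a single true $\Pi_1$-sentence: fixing $\theta(x,p)$ with $U \vdash \forall x\, \exists p\, \theta(x,p)$, set
\[
\beta \;:=\; \forall x\, \forall p\, \bigl(\theta(x,p) \to \Prf_T(p, \gn{\exists y\, \psi_f(\num{x}, y)})\bigr).
\]
Since $\theta$ is $\Sigma_1$ and $\Prf_T$ is elementary, $\beta$ is (provably equivalent to) a $\Pi_1$-sentence, and it is true: whenever $\theta(\num{n}, \num{k})$ holds we have $k = g(n)$, which by definition of $g$ is a $T$-proof of $\exists y\, \psi_f(\num{n}, y)$.

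Granting $\beta$, we have $U + \beta \vdash \forall x\, \exists p\, \Prf_T(p, \gn{\exists y\, \psi_f(\num{x}, y)})$, that is, $U + \beta \vdash \forall x\, \Box_T \exists y\, \psi_f(\num{x}, y)$, so $f \in \cF_T(U + \beta)$ with witness $\psi_f$. As $\beta$ is a true $\Pi_1$-sentence, Lemma \ref{lm:pi1external} yields $\cF_T(U + \beta) = \cF_T(U)$, and therefore $f \in \cF_T(U)$; this is precisely the role of Lemma \ref{lm:pi1external} advertised just before the statement. The only remaining subtlety is the right to fix a $\Sigma_1$-definition $\theta$ of the graph of $g$ that is both $U$-provably total and correct in $\nat$ (so that $\beta$ is genuinely true), which is exactly what the definition of $\cF(U)$ provides, and, should a single-valued definition be convenient, what Lemma \ref{lm:Fkuniq} gives for $k = 0$. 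I expect the $\Pi_1$-absorption step to be the decisive one, since it is what frees the argument from any need for provable agreement between the arbitrary definition $\theta$ and the canonical least-proof predicate.
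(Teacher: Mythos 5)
Your proposal is correct and follows essentially the same route as the paper: the converse direction is handled, exactly as in the paper, by packaging the agreement between the arbitrary $\Sigma_1$-definition $\theta$ of the least-proof function and the actual proof predicate into a single true $\Pi_1$-sentence and then absorbing it via Lemma \ref{lm:pi1external}. The forward direction, which the paper dismisses as immediate from the definition, is the least-element-principle argument you spell out.
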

\begin{proof}
Only if part follows from the definition of $\cF_{T}(U)$. Fix a function $f(x)$ (we consider unary function for simplicity) and its $\Sigma_1$-definition $\psi_f(x, y)$. Let $\theta(x, p)$ be a $\Sigma_1$-definition of a function which maps $n$ to the least g{\"o}delnumber of a $T$-proof
of $\exists y\, \psi_f(\num{n}, y)$, such that
$U \vdash \forall x\, \exists p\, \theta(x, p)$.
Consider the following true $\Pi_1$-sentence 
$$
\pi \leftrightharpoons \forall x\, \forall p \left(
\theta(x, p) \to \Prf_T(p, \gn{\exists y\, \psi_f(\num{x}, y)}) \right).
$$
It follows that $U + \pi \vdash \forall x\, \exists p\, 
 \Prf_T(p, \gn{\exists y\, \psi_f(\num{x}, y)})$, whence
 $f(x) \in \cF_{T}(U + \pi) = \cF_{T}(U)$ by Lemma \ref{lm:pi1external}, as required.
\end{proof}

The following proposition gives a more explicit description of the class $\cF_{T}(U)$. We denote by $\cC_1 \circ \cC_2$ the class of all compositions of the form $f(g_1(\vec{x}), \dots, g_n(\vec{x}))$, where $f \in \cC_1$ and $g_1, \dots, g_n \in \cC_2$. It is always assumed that both $\cC_1$ and $\cC_2$ contain all projections (if it is not so, we add all the projections to these classes without writing it explicitly), 
so that both $\cC_1$ and $\cC_2$ are included in $\cC_1 \circ \cC_2$.
\begin{proposition}\label{prop:Fboxdef}
	$\cF_{T}(U) = \cF(T) \circ \{\sR_T\} \circ \cF(U) = \mathcal{E}_3 \circ \{\sR_T\} \circ \cF(U).$	
\end{proposition}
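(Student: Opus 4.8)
The plan is to prove the two equalities by closing a cycle of inclusions. Since $T \supseteq \EA$ we have $\mathcal{E}_3 = \cF(\EA) \subseteq \cF(T)$, and hence $\mathcal{E}_3 \circ \{\sR_T\} \circ \cF(U) \subseteq \cF(T) \circ \{\sR_T\} \circ \cF(U)$ for free. It therefore suffices to prove the remaining two inclusions
$$
\cF(T) \circ \{\sR_T\} \circ \cF(U) \subseteq \cF_{T}(U) \quad\text{and}\quad \cF_{T}(U) \subseteq \mathcal{E}_3 \circ \{\sR_T\} \circ \cF(U),
$$
since together with the trivial inclusion above they close a cycle and force all three classes to coincide.

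For the first inclusion I would record three closure facts. \emph{(a)} $\cF(U) \subseteq \cF_{T}(U)$: from $U \vdash \forall \vec x\, \exists y\, \psi_g(\vec x, y)$ provable $\Sigma_1$-completeness gives $\EA \vdash \exists y\, \psi_g(\vec x, y) \to \Box_T \exists y\, \psi_g(\num{\vec x}, y)$, hence $U \vdash \forall \vec x\, \Box_T \exists y\, \psi_g(\num{\vec x}, y)$. \emph{(b)} $\sR_T \circ g \in \cF_{T}(U)$ for $g \in \cF(U)$: combining $\EA \vdash \forall y\, \Box_T \exists w\, \rho(\num y, w)$ (Proposition \ref{prop:boxkRk} with $k = 1$, where $\rho$ is the graph of $\sR_T$) with $\Sigma_1$-completeness applied to $\psi_g$ yields $\EA \vdash \exists y\, \psi_g(\vec x, y) \to \Box_T \exists w\, \tau(\num{\vec x}, w)$ for the natural $\Sigma_1$-definition $\tau$ of $\sR_T(g(\vec x))$, and composing with $U \vdash \forall \vec x\, \exists y\, \psi_g(\vec x, y)$ finishes. \emph{(c)} Closure under an outer $\cF(T)$-function: if $F \in \cF(T)$ is $n$-ary and $a_1, \dots, a_n \in \cF_{T}(U)$ with $\Sigma_1$-graphs $\psi_i$, take the obvious $\Sigma_1$-definition $\chi$ of the composite and use $T \vdash \forall \vec y\, \exists z\, \phi_F(\vec y, z)$, necessitation, and $\EA \vdash \Box_T \forall x\, \theta(x) \to \forall x\, \Box_T \theta(\num x)$ to obtain $\EA \vdash \forall \vec x\, \Box_T(\bigwedge_i \exists y_i\, \psi_i(\num{\vec x}, y_i) \to \exists z\, \chi(\num{\vec x}, z))$; combining with $U \vdash \forall \vec x\, \Box_T \bigwedge_i \exists y_i\, \psi_i(\num{\vec x}, y_i)$ via the second derivability condition gives $F(a_1, \dots, a_n) \in \cF_{T}(U)$. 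An arbitrary element of $\cF(T) \circ \{\sR_T\} \circ \cF(U)$ has the form $F(a_1, \dots, a_n)$ with $F \in \cF(T)$ and each $a_i$ equal to $g_i$ or $\sR_T \circ g_i$ for some $g_i \in \cF(U)$; by \emph{(a)}, \emph{(b)} each $a_i \in \cF_{T}(U)$, and by \emph{(c)} the whole composite lies in $\cF_{T}(U)$.

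For the second, harder inclusion, fix $f \in \cF_{T}(U)$ with $\Sigma_1$-definition $\psi_f(\vec x, y) = \exists z\, \delta_f(\vec x, y, z)$, $\delta_f \in \Delta_0$, so that $U \vdash \forall \vec x\, \exists p\, \Prf_T(p, \gn{\exists y\, \psi_f(\num{\vec x}, y)})$. Let $G(\vec x)$ be the least such $p$, after the canonical rewriting of $\exists y\, \psi_f(\num{\vec x}, y)$ into the form $\exists w\, \delta'(\num{\vec x}, w)$ with $\delta' \in \Delta_0$ (where $w$ codes the pair $\langle y, z \rangle$) used in the definition of $\sR_T$; by the least-number principle in $U$ and the elementarity of $\Prf_T$ we get $G \in \cF(U)$. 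By the very definition of $\sR_T$ the least witness $w_0$ to $\exists w\, \delta'(\num{\vec x}, w)$ satisfies $w_0 \leqslant \sR_T(G(\vec x))$, and its first component equals $f(\vec x)$ because $\psi_f$ defines a graph. Hence the elementary function $F(\vec x, b) := \bigl(\mu w \leqslant b.\, \mathsf{Tr}_{\Delta_0}(\gn{\delta'(\num{\vec x}, \num w)})\bigr)_0 \in \mathcal{E}_3$ recovers $f(\vec x) = F(\vec x, \sR_T(G(\vec x)))$, exhibiting $f$ as a member of $\mathcal{E}_3 \circ \{\sR_T\} \circ \cF(U)$, with the projection supplying the argument $\vec x$ and $\sR_T \circ G$ supplying the bound.

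The step I expect to be most delicate is precisely this elementary recovery: it is essential that $\sR_T$ is defined through $\Delta_0$-formulas, whose truth predicate $\mathsf{Tr}_{\Delta_0}$ is elementary. A direct search for a $y$ with $\psi_f(\vec x, y)$ would test a genuine $\Sigma_1$-predicate and so could not be carried out in $\mathcal{E}_3$; the resolution is to push the inner existential witness $z$ into the searched-for object $w = \langle y, z \rangle$ and search against the $\Delta_0$-matrix $\delta'$, exactly as the definition of $\sR_T$ is arranged. I would also verify that the canonical rewriting of $\exists y\, \psi_f$ into $\exists w\, \delta'$ alters the proof g{\"o}delnumber only elementarily, so that the enlarged $G$ remains in $\cF(U)$, which is closed under composition with elementary functions.
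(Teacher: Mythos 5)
Your proof is correct and follows essentially the same route as the paper's: the same cycle of inclusions, with the easy direction handled by provable $\Sigma_1$-completeness and the hard direction by taking the $U$-provably total function giving the least $T$-proof of the totality instance and using its $\sR_T$-image to bound the witness. The only (harmless) difference is the final elementary recovery of $f$: the paper searches over $T$-proofs of the instance $\delta(\num{x}, \num{y}, \num{z})$ below an elementary bound depending on $\sR_T(h(x))$, whereas you search directly for the least true witness $w \leqslant \sR_T(G(\vec{x}))$ via the elementary $\Delta_0$-truth predicate.
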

\begin{proof}
	Let us first show that $\cF(T) \circ \{\sR_T\} \circ \cF(U) \subseteq \cF_{T}(U)$. For simplicity we consider the following composition 
	$f(\sR_T(g_1(x)), \sR_T(g_2(x))) \in \cF(T) \circ \{\sR_T\} \circ \cF(U).$
	The general case (more functions and variables) can be dealt with similarly. Fix $\Sigma_1$-formulas $\chi_{g_1}(x, y)$, $\chi_{g_2}(x, y)$,
	$\theta_{\sR_T}(x, y)$ (this formula is given by Proposition \ref{prop:boxkRk}), $\phi_f(x, y, z)$ defining the graphs of the corresponding functions such that
	$$
	U \vdash \forall x\, \exists z_1\, \chi_{g_1}(x, z_1) \land 
	\forall x\, \exists z_2\, \chi_{g_2}(x, z_2) \land \forall z\, \Box_T \left(\exists u\, \theta_{\sR_T}(\num{z}, u)\right),
	$$
	and $T \vdash \forall u_1, u_2\, \exists y\, \phi_f(u_1, u_2, y)$.
	Define the graph of the above composition as follows
	$$
	\psi(x, y) \leftrightharpoons 
	\exists z_1, z_2, u_1, u_2
	\left( \chi_{g_1}(x, z_1) \land \chi_{g_2}(x, z_2) \land 
	\theta_{\sR_T}(z_1, u_1) \land \theta_{\sR_T}(z_2, u_2)
	\land \phi_f(u_1, u_2, y) \right).
	$$
	We claim that $U \vdash \forall x\, \Box_T \left(
	\exists y\, \psi(\num{x}, y)\right)$. Indeed, by provable $\Sigma_1$-completeness we have
	\begin{align*}
	U \vdash \chi_{g_1}(x, z_1) \land \chi_{g_2}(x, z_2) 
	&\to \Box_T \left(\chi_{g_1}(\num{x}, \num{z_1}) \land \chi_{g_2}(\num{x}, \num{z_2})  
	\land \exists u_1\, \theta_{\sR_T}(\num{z_1}, u_1) \land \exists u_2\, \theta_{\sR_T}(\num{z_2}, u_2)\right)\\
	&\to \Box_T \left(\exists y\, \psi(\num{x}, y)\right),
	\end{align*}
	where in the second implication we weakened the formula under $\Box_T$ by introducing the existential quantifiers instead of the numerals $\num{z_1}$ and $\num{z_2}$, and applied the $T$-provable totality of to get $\exists y\, \phi_f(u_1, u_2, y)$.
	So we have
	$$
	U \vdash \forall x \left(
	\exists z_1\, \chi_{g_1}(x, z_1) \land \exists z_2\, \chi_{g_2}(x, z_2) 
	\to \Box_T \left( \exists y\, \psi(\num{x}, y)\right)
	\right), 
	$$
	whence
	$
	U \vdash \forall x\, \Box_T\left( \exists y\, \psi(\num{x}, y)\right)
	$ follows, as required.
	
	The inclusion $\mathcal{E}_3 \circ \{\sR_T\} \circ \cF(U) \subseteq \cF(T) \circ \{\sR_T\} \circ \cF(U)$ is clear, since $T$ contains $\EA$.
	Now we prove the final inclusion  $\cF_{T}(U) \subseteq \mathcal{E}_3 \circ \{\sR_T\} \circ \cF(U)$. Fix some $f(x) \in \cF_{T}(U)$ (where for simplicity we again consider the unary function) and a corresponding $\Sigma_1$-formula $\psi_f(x, y)$ such that 
	$
	U \vdash \forall x\, \Box_T \left( \exists y\, \psi_f(\num{x}, y)\right).
	$
	Fix an elementary formula $\delta(x, y, z)$ such that 
	$\psi_f(x, y) \leftrightharpoons \exists z\, \delta(x, y, z)$. 
	Define $h(x)$ to be a function which maps $n$ to the g{\"o}delnumber of the least $T$-proof of 
	$\exists u\, \exists y, z \leqslant u\, \delta(\num{n}, y, z)$. The function $h(x)$ is $U$-provably total, since 
	$$
	U \vdash \forall x\, \Box_T \left( \exists y\, \exists z\, \delta(\num{x}, y, z)\right),
	$$
	and the least element principle for elementary formulas is available in $U$, since $U$ contains $\EA$. By $\EA$-provable $\Sigma_1$-completeness we have
	$$
	\EA \vdash \forall x, y, z \left(\delta(x, y, z) \to \Box_T \delta(\num{x}, \num{y}, \num{z})\right),
	$$
	whence the function which maps $n, m, k$ to the g{\"o}delnumber of the least $T$-proof of $\delta(\num{n}, \num{m}, \num{k})$ is elementary (being provably total in $\EA$). Let $t(x, y, z)$ be an elementary upper bound for this function which is monotone in all arguments (e.g., we can take $t(x, y, z)$ to be $2^{x + y + z}_l$ for some fixed sufficiently large $l \in \nat$).
	
	By definition of $\sR_T(x)$ and $h(x)$ the value $\sR_T(h(n))$ is an upper bound on the least witness to 
	$\exists u\, \exists y, z \leqslant u\, \delta(\num{n}, y, z)$.
	We claim that the graph of $f(x)$ can be defined as follows
	$$
	\exists p \leqslant t(x, \sR_T(h(x)), \sR_T(h(x)))\, \exists z \leqslant p\, \Prf_T(p, \gn{\delta(\num{x}, \num{y}, \num{z})}).
	$$
	Indeed, if the relation holds for some $n, m \in \nat$, then $T$ proves $\delta(\num{n}, \num{m}, \num{k})$ for some $k \in \nat$, whence $\exists z\, \delta(\num{n}, \num{m}, z)$ is true (since $T$ is $\Sigma_1$-sound) and so $f(n) = m$. Conversely, assume $f(n) = m$ and choose the least $k \in \nat$ such that $\delta(\num{n}, \num{m}, \num{k})$ is true. By the discussion above the least witness to 
	$\exists u\, \exists y, z \leqslant u\, \delta(\num{n}, y, z)$ is bounded by $\sR_T(h(n))$, whence $m$ (since $m$ is the only witness for $y \leqslant u$) and $k$ are also bounded by $\sR_T(h(n))$. By definition of the function $t(x, y, z)$ there is a $T$-proof of $\delta(\num{n}, \num{m}, \num{k})$ with the g{\"o}delnumber bounded by $t(n, m, k)$, which is itself bounded by $t(n, \sR_T(h(n)), \sR_T(h(n)))$ since $t(x, y, z)$ is monotone, whence the relation holds.
	
	Now, consider the following elementary function
	$$
	g(x, u) = \mu y_{\leqslant u}\left( \exists p \leqslant u\, \exists z \leqslant p\, \Prf_T(p, \gn{\delta(\num{x}, \num{y}, \num{z})}) \right).
	$$
	We claim that $f(x) = g(x, t(x, \sR_T(h(x)), \sR_T(h(x))))$, which belongs to $\mathcal{E}_3 \circ \{\sR_T\} \circ \cF(U)$ as a composition of
	$g(x, t(x, y, z)) \in \mathcal{E}_3$ and $\sR_T(h(x)) \in 
	\{\sR_T\} \circ \cF(U)$. The claim follows from the equivalent definition of the graph of $f(x)$ given above. For a given $n \in \nat$ the value $t(n, \sR_T(h(n)), \sR_T(h(n)))$ gives an upper bound on the g{\"o}delnumber of a $T$-proof (and hence the numerals appearing in this proof) of $\delta(\num{n}, \num{m}, \num{k})$ for some $m$ and $k$. Since $\exists z\, \delta(x, y, z)$ defines the graph of $f(x)$ it must be the case that $f(n) = m$.
\end{proof}

One can check that the above proof also goes through for $k > 1$ (since Proposition \ref{prop:boxkRk} also holds for such $k$), so we obtain $\cF^k_{T}(U) = \cF(T) \circ \{\sR^{(k)}_T(x)\} \circ \cF(U)$ for each $k \geqslant 1$, and $\cF^0_{T}(U) = \cF(U)$.

\section{The non-isomorphism theorem and its corollaries}

Shavrukov's sufficient condition for $\fD_T \not \cong \fD_S$ (Theorem \ref{th:shavnoniso}) implies that the theories $T$ and $S$ have distinct classes of provably total computable functions, namely, $\sR_S(x) \in \cF(T) \setminus \cF(S)$. As a corollary of the improved version (Theorem \ref{th:adamnoniso}) of non-isomorphism theorem, Adamsson obtained an example demonstrating that the condition $\cF(T) \neq \cF(S)$ is not necessary for $\fD_T\not \cong \fD_S$. In fact, he showed that 
$(\fL_T, \Box_T) \not \cong (\fL_T, \Box^{(6)}_T)$, and so the isomorphism class of $\fD_T$ depends on the choice of the provability predicate for $T$. However, there are no other applications of the improved non-isomorphism condition in the work of Adamsson \cite{Adam11}, which provide new non-isomorphism examples (which do not already follow from Shavrukov's condition).

In this section we strengthen Adamsson's improved version of Shavrukov's non-isomorphism theorem. Our strengthening allows to answer positively the question of Adamsson concerning the non-isomorphism $(\fL_T, \Box_T) \not \cong (\fL_T, \Box_T\Box_T)$. We also obtain new examples of theories with non-isomorphic diagonalizable algebras. In particular, we give a more natural example of theories with non-isomorphic algebras and same classes of provably total computable functions, and show that the condition of provable $\Pi_1$-equivalence is not sufficient for the isomorphism (cf. Theorem \ref{th:shaviso}).

Firstly, let us discuss the main ingredients of Shavrukov's proof \cite{Shav93a} of Theorem \ref{th:shavnoniso}
to clarify the proof of the main theorem which follows. 
The first major idea is that of associating a model of computation $\delta^T_\gamma(x)$ with the diagonalizable algebra $\fD_T$.
The role of natural numbers is played by the elements
$$
\sharp^n_T \leftrightharpoons \Box^{(n+1)}_T \bot \land \neg \Box^{(n)}_T \bot.
$$ 
The main property of these sentences is the following
$$
\EA \vdash \forall x\, \forall y \left(\sharp^x_T \land \sharp^y_T \to x = y\right).
$$ 
Programs correspond to sentences $\gamma$ and computations correspond to $T$-proofs. Computation complexity is measured by the size of a proof. 
For a given sentence $\gamma$ define a $\{0,1\}$-valued partial recursive function $\delta^T_\gamma(x)$ 
and the corresponding complexity measure $\Delta^T_\gamma(x)$ as follows
\begin{align*}
\delta^T_\gamma(n) &= \begin{cases}
0, &\text{ if $T \vdash \sharp^n_T \to \gamma$,}\\
1, &\text{ if $T \vdash \sharp^n_T \to \neg \gamma$,}\\
\uparrow, &\text{ otherwise,} 
\end{cases}\\
\Delta^T_\gamma(n) &= \mu p.\left(T \vdash_p \sharp^n_T \to \gamma  \lor T \vdash_p \sharp^n_T \to \neg \gamma \right).
\end{align*}

The main fact about the function $\delta^T_\gamma(x)$ is the following lemma (see \cite[Lemma 2]{Shav93a}), which asserts
that $\delta^T$ provides an indexing of $\{0, 1\}$-valued partial recursive functions, and there is a translation between $\delta^T$-indexes and $\phi$-indexes such that the corresponding complexity measures $\Delta^T$ and $\Phi$ are elementarily related.

\begin{lemma}\label{lm:index}
	For each $\phi$-index $c$ for a $\{0,1\}$-valued partial recursive function there is a $\delta^T$-index $\gamma$ such that
	$$
	\delta^T_\gamma = \phi_c \quad \mbox{and} \quad \Delta^T_\gamma \preccurlyeq_{\dom\, \phi_c} \Phi_c.
	$$
	Conversely, for each $\delta^T$-index $\gamma$ there is a $\phi$-index $c$ for a $\{0,1\}$-valued partial recursive function such that
	$$
	\phi_c = \delta^T_\gamma \quad \mbox{and} \quad \Phi_c \preccurlyeq_{\dom\, \delta^T_\gamma} \Delta^T_\gamma.
	$$
\end{lemma}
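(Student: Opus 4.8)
The plan is to prove the two directions separately, in each case establishing the functional identity together with the complexity comparison. I would begin with the easier \emph{converse} direction. Given a $\delta^T$-index $\gamma$, let $c$ be the index of the following machine: on input $n$ it first computes $\gn{\sharp^n_T}$ (elementary in $n$), and then searches through candidate proofs $p = 0, 1, 2, \dots$, at each stage checking the elementary conditions $\Prf_T(p, \gn{\sharp^n_T \to \gamma})$ and $\Prf_T(p, \gn{\sharp^n_T \to \neg\gamma})$; upon finding the least $p$ witnessing either, it outputs $0$ or $1$ accordingly. This machine halts exactly when $\delta^T_\gamma(n)$ is defined, returning its value, so $\phi_c = \delta^T_\gamma$ and $\phi_c$ is $\{0,1\}$-valued. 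Since the search runs up to $p = \Delta^T_\gamma(n)$ and every step is elementary in $p$, the running time, hence $\Phi_c(n)$, is elementary in $\Delta^T_\gamma(n)$, which gives $\Phi_c \preccurlyeq_{\dom\,\delta^T_\gamma} \Delta^T_\gamma$.

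For the \emph{forward} direction I would pass from a $\{0,1\}$-valued $\phi_c$ to a sentence $\gamma$ by self-reference. Fixing $\Sigma_1$-definitions $\sigma_0(x)$ and $\sigma_1(x)$ of ``$\phi_c(x)\dar = 0$'' and ``$\phi_c(x)\dar = 1$'', I would use the fixed-point theorem to obtain $\gamma$ with $T \vdash \gamma \eqv \exists m\,(\sharp^m_T \land \sigma_0(m))$ (or a more careful self-referential variant, as explained below). The crucial simplification is the uniqueness identity $\EA \vdash \sharp^x_T \land \sharp^y_T \to x = y$, which yields $T \vdash \sharp^n_T \to (\gamma \eqv \sigma_0(n))$ for every $n$, since under $\sharp^n_T$ the only possible witness $m$ is $n$ itself. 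On the domain the complexity bound is then immediate from provable $\Sigma_1$-completeness: if $\phi_c(n) = 0$ then $\sigma_0(n)$ is a true $\Sigma_1$-sentence whose witness is the halting computation, so it has a $T$-proof of g\"odelnumber elementary in $\Phi_c(n)$, and combining this with the fixed-point equivalence produces a proof of $\sharp^n_T \to \gamma$ of g\"odelnumber elementary in $\Phi_c(n)$; the case $\phi_c(n) = 1$ is symmetric, using the elementary fact $\EA \vdash \sigma_1(n) \to \neg\sigma_0(n)$ (determinism of the computation). This yields $\Delta^T_\gamma \preccurlyeq_{\dom\,\phi_c} \Phi_c$.

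The main obstacle is the \emph{exact} functional identity off the domain: I must guarantee that when $\phi_c(n)$ diverges, $T$ proves neither $\sharp^n_T \to \gamma$ nor $\sharp^n_T \to \neg\gamma$. By the equivalence above this amounts to $T \nvdash \Box^{(n+1)}_T\bot \to (\Box^{(n)}_T\bot \lor \sigma_0(n))$ for a false $\sigma_0(n)$, and dually. This is genuinely delicate: $\Sigma_1$-soundness does not forbid it directly, since the implication is not itself $\Sigma_1$ and the premise $\sharp^n_T$ is false in $\nat$. I expect the passive definition of $\gamma$ above to be insufficient here, and would instead build each commitment self-referentially as a Solovay-style race, guarding the disjunct ``$\phi_c(m)$ outputs $0$ by stage $s$'' with the clause that no $T$-proof of $\sharp^m_T \to \neg\gamma$ of g\"odelnumber below an elementary bound in $s$ has yet appeared (and symmetrically for the opposite commitment). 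A least-proof minimality analysis combined with $\Sigma_1$-soundness should then show that a commitment becomes $T$-provable only if the computation genuinely reaches the corresponding halting state, while leaving the short on-domain proofs of the previous paragraph intact. Checking that this race simultaneously preserves the elementary proof-size bounds and forces strict non-provability off the domain is the technical heart of the argument.
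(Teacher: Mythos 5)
The paper offers no proof of this lemma: it is quoted verbatim from Shavrukov \cite[Lemma 2]{Shav93a}, so the only thing to measure your attempt against is Shavrukov's original argument. Your converse direction is correct and is the standard one: the machine that searches for the least $T$-proof of $\sharp^n_T \to \gamma$ or $\sharp^n_T \to \neg\gamma$ computes $\delta^T_\gamma$ in time elementary in $\Delta^T_\gamma(n)$ (and $n \leqslant \Delta^T_\gamma(n)$ up to an elementary shift, so the bound on $\Phi_c$ comes out in the required form). Your diagnosis of the forward direction is also correct, and it is worth stressing that the ``passive'' fixed point really does fail, not merely ``might'': take $\phi_c(n)$ to be the machine that searches for a $T$-proof of $\Box^{(n)}_T\bot$ and outputs $0$ if it finds one. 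Then $\phi_c(n)\uparrow$ for all $n$ by $\Sigma_1$-soundness, yet $\sigma_0(n)$ is $T$-provably implied by $\Box^{(n+1)}_T\bot$, hence by $\sharp^n_T$, so the passive $\gamma$ would give $\delta^T_\gamma(n)=0$ everywhere. So the self-referential race you point to is unavoidable, and it is indeed what Shavrukov does.

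The gap is that you stop exactly there. The forward direction is the substantive half of the lemma, and your last paragraph only names the shape of the construction (guard each commitment ``$\phi_c(n)$ halts with output $i$ by stage $s$'' by the non-appearance of a $T$-proof of the opposite commitment below an elementary bound in $s$) while explicitly deferring the two things that make it work: (a) the off-domain argument that neither $\sharp^n_T\to\gamma$ nor $\sharp^n_T\to\neg\gamma$ becomes provable when $\phi_c(n)\uparrow$, which requires a least-proof/Rosser-style case analysis inside $T+\sharp^n_T$ combined with $\Sigma_1$-soundness of $T$ to rule out a ``phantom'' proof winning the race, together with the consistency of $T+\sharp^n_T$ (i.e., $T\nvdash \Box^{(n+1)}_T\bot\to\Box^{(n)}_T\bot$) so that the two cases of $\delta^T_\gamma$ cannot collide; and (b) the check that the added guards do not destroy the on-domain bound $\Delta^T_\gamma \preccurlyeq_{\dom\,\phi_c}\Phi_c$, i.e., that when the computation genuinely halts the guarded disjunct still has a $T$-proof of g\"odelnumber elementary in $\Phi_c(n)$. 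Since you yourself label this ``the technical heart of the argument'' and do not carry it out, the proposal does not yet constitute a proof of the forward direction; it is a correct plan with the decisive verification missing.
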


An important observation concerning $\delta$-indexes is that if $(\cdot)^* \colon \fD_T \to \fD_S$ is a homomorphism, then $\delta^S_{\gamma^*}$ is an extension of $\delta^T_\gamma$. If, moreover, $(\cdot)^*$ is an isomorphism, then 
$\delta^S_{\gamma^*} = \delta^T_\gamma$. So a homomorphism provides certain kind of translation between $\delta^T$-indexes and $\delta^S$-indexes. Note that, unlike in Lemma \ref{lm:index}, in this case there is a loss of the information about the size of proofs and, in particular, about the relationship between $\Delta^S_{\gamma^*}$ and $\Delta^T_\gamma$.

This is where the second crucial idea of Shavrukov comes into play. Namely, to apply a version of Blum's compression theorem for the model $\delta^T_\gamma(x)$ and the corresponding complexity measure $\Delta^T_\gamma(x)$.
The following lemma  is a version of Blum's compression theorem for $\phi$ and $\Phi$, which can be translated to $\delta$ and $\Delta$ by means of Lemma \ref{lm:index}. For the proof see \cite[Lemma 7]{Shav93a} (there it is stated in a weaker form, however, the proof yields a stronger result).

\begin{lemma}\label{lm:BlumPhi}
	For each cumulative function $f$ with $\dom\, f = X$ there is 
	a $\phi$-index $b$ for a $\{0,1\}$-valued partial recursive function 
	with $\dom\, \phi_b = X$ such that 
	$$
	\Phi_b \approx_X f \preccurlyeq_{X \cap \dom\, \phi_a} \Phi_a,
	$$ 
	whenever $\phi_a =_{X \cap \dom\, \phi_a} \phi_b$.
\end{lemma}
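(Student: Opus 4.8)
The plan is to prove this as an instance of Blum's compression theorem, adapted to partial functions with a prescribed domain $X$ and to the measure $\Phi$. Since Lemma \ref{lm:index} supplies the translation between $\delta^T,\Delta^T$ and $\phi,\Phi$, I would argue directly with $\phi$ and $\Phi$. I would build $\phi_b$ by an effective diagonalisation with \emph{cancellation}, obtaining its index $b$ as a fixed point via the recursion theorem. On input $n$ I first compute $f(n)$; since $f$ is cumulative this takes time elementary in $f(n)$, and it diverges precisely when $n\notin X$, so $\dom\phi_b=X$ is forced immediately. I then enumerate the indices $a\le n$ that are not yet cancelled and test the elementary predicate $\Phi_a(n)\le f(n)$ for each. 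If there is a candidate, I cancel the least one, say $a_0$, and output $\phi_b(n)=1-\phi_{a_0}(n)$, permanently recording $a_0$ as cancelled; otherwise I output $0$. Note the self-reference to $a=b$ is benign: the bookkeeping overhead makes $\Phi_b(n)>f(n)$, so the test always fails for $b$ and $b$ never cancels itself.

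Next I would verify $\Phi_b\approx_X f$. For the upper bound $\Phi_b\preccurlyeq_X f$, the computation of $\phi_b(n)$ consists of computing $f(n)$, running at most $n$ simulations each for at most $f(n)$ steps, and bookkeeping; the point is that cumulativity forces $n\preccurlyeq_X f(n)$ — a cumulative $\phi$-index $a$ for $f$ gives $n\le\Phi_a(n)\le q(f(n))$ on $X$ — so the whole cost is elementary in $f(n)$. For the lower bound $f\preccurlyeq_X\Phi_b$, the computation already includes writing down $f(n)$, whence $\Phi_b(n)$ is at least roughly $\log f(n)$ and $f(n)\le 2^{\Phi_b(n)}$, giving the required elementary domination.

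The heart of the argument is the compression bound. Fix $a$ with $\phi_a=_{X\cap\dom\phi_a}\phi_b$. First, $a$ is never cancelled: a cancellation of $a$ at some $n$ requires $\Phi_a(n)\le f(n)$, hence $n\in\dom\phi_a$ and $n\in X$ (we computed $f(n)$), and then $\phi_b(n)=1-\phi_a(n)\ne\phi_a(n)$, contradicting the agreement. Now suppose $\Phi_a(n)\le f(n)$ held for infinitely many $n\in X\cap\dom\phi_a$ with $n\ge a$. At each such $n$, $a$ is a candidate and we cancel the \emph{least} candidate, so $a$ is bypassed only when some $a'<a$ is cancelled. Each index is cancelled at most once and there are only $a$ indices below $a$, so at most $a$ of these inputs result in a bypass; at every other such input the least candidate is $a$ itself, and $a$ gets cancelled — contradicting the previous sentence. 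Hence $\Phi_a(n)>f(n)$ for all but finitely many $n\in X\cap\dom\phi_a$, and absorbing the finitely many exceptions into the elementary transformation yields $f\preccurlyeq_{X\cap\dom\phi_a}\Phi_a$.

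The hard part will be the calibration that keeps the two demands on $\Phi_b$ from colliding: the cancellation threshold must be taken to be exactly $f(n)$, not an elementary blow-up of it, for the upper bound $\Phi_b\preccurlyeq_X f$ to survive, yet the combinatorics must still deliver a genuine lower bound $f\preccurlyeq\Phi_a$ against every competing index. The two supporting points that need care are extracting $n\preccurlyeq_X f(n)$ from cumulativity (so the simulation cost really is elementary in $f(n)$) and checking that the per-index finite exception sets can be swept into the elementary $q$ witnessing $f\preccurlyeq_{X\cap\dom\phi_a}\Phi_a$.
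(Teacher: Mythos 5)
Your overall strategy is the right one. The paper in fact gives no proof of this lemma at all — it defers to \cite[Lemma 7]{Shav93a}, which is precisely the Blum compression theorem for $\Phi$ established by the kind of cancellation construction you describe — so your write-up is the expected argument. Your verification of $\Phi_b\approx_X f$ (using $n\le \Phi_{a_f}(n)\le q(f(n))$ on $X$ from cumulativity for the upper bound, and the cost of producing $f(n)$ for the lower bound) and your counting argument for the compression bound (agreement on $X\cap\dom\,\phi_a$ forces $a$ never to be cancelled; at each input where $a$ is an eligible candidate some index $\le a$ is cancelled; at most $a$ such events can bypass $a$) are correct as far as they go, and you are right that the self-reference is harmless — indeed the recursion theorem is not even needed, since the machine tests all $a\le n$ without having to recognize its own index.

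The genuine gap is in the bookkeeping of the cancellation record, and it is not cosmetic. A partial recursive function is computed input by input, so the computation of $\phi_b(n)$ must itself reconstruct which indices are ``already cancelled'', and you never specify in what order the inputs are processed. Under the only reading your text supports — process $0,1,2,\dots$ in numerical order, so that ``already cancelled at $n$'' means ``cancelled at some $m<n$'' — reconstructing the record at $n$ requires computing $f(m)$ for every $m<n$, which diverges as soon as some $m<n$ lies outside $X$. Since in every application of the lemma $X$ is a nonrecursive r.e.\ set, $\dom\,\phi_b$ would collapse to $\{n\in X: [0,n)\subseteq X\}$, a finite set, and both $\dom\,\phi_b=X$ and the compression bound fail. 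The standard repair is to process inputs in the order in which $f$ is seen to converge: run the construction in stages $s=0,1,2,\dots$, at stage $s$ handling those $m\le s$ whose computation of $f(m)$ halts within $s$ steps, and let $\phi_b(n)$ reconstruct the history up to the stage at which $n$ itself is handled. Your counting argument survives this reordering unchanged (it uses only that every $n\in X$ is eventually handled and that each index is cancelled at most once), but the claim $\Phi_b\preccurlyeq_X f$ must then be re-checked: the stage at which $n$ is handled is at most $\max(n,\Phi_{a_f}(n))\le q(f(n))$ by cumulativity, only inputs $m$ below that stage can have been handled earlier, and each such $f(m)$ has value elementary in that stage, so the whole reconstruction still costs time elementary in $f(n)$. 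With that amendment the proof is complete.
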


The next lemma is a special case of Lemma \ref{lm:BlumPhi} 
(see \cite[Lemma 3]{Shav93a}).
\begin{lemma}\label{lm:BlumX}
	For each r.e.\,set $X$ there is a $\phi$-index $c$ for a $\{0,1\}$-valued partial recursive function such that $\dom\, \phi_c = X$ and $\Phi_c \preccurlyeq_{X \cap \dom\, \phi_d} \Phi_d$, whenever $\phi_d =_{X \cap \dom\, \phi_d} \phi_c$.
\end{lemma}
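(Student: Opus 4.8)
The plan is to produce a \emph{cumulative} function whose domain is exactly $X$ and then feed it into the compression theorem, Lemma~\ref{lm:BlumPhi}. Since $X$ is r.e., I would first fix a $\phi$-index $e$ with $\dom\,\phi_e = X$ and set $f(x) := \Phi_e(x)$. By the very definition of the complexity measure we have $\dom\, f = \dom\,\phi_e = X$, so $f$ has the required domain, and note that $f$ need not be $\{0,1\}$-valued, since Lemma~\ref{lm:BlumPhi} accepts an arbitrary cumulative $f$ as input and returns a $\{0,1\}$-valued index.

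The key step, and essentially the only nontrivial point, is to verify that $f = \Phi_e$ is cumulative. I would argue that $f$ is computed by the obvious machine which, on input $n$, simulates $\phi_e(n)$ step by step and, upon halting after $s$ steps, outputs $n + e + s$. Under any reasonable formalization of the universal machine the running time of this simulation is elementary in the number $s$ of simulated steps, and $s \leqslant \Phi_e(n) = f(n)$; hence there is a $\phi$-index $a$ for $f$ with $\Phi_a \preccurlyeq_{\dom\, f} f$, which is exactly cumulativity. This is where the work is; everything after it is bookkeeping with the relations $\preccurlyeq_X$ and $\approx_X$.

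With $f$ in hand I would apply Lemma~\ref{lm:BlumPhi} to obtain a $\phi$-index $b$ for a $\{0,1\}$-valued partial recursive function with $\dom\,\phi_b = X$ and
$$
\Phi_b \approx_X f \preccurlyeq_{X \cap \dom\,\phi_d} \Phi_d
$$
whenever $\phi_d =_{X \cap \dom\,\phi_d} \phi_b$. Setting $c := b$ gives $\dom\,\phi_c = X$ as required, and it remains only to combine the two relations. From $\Phi_c \approx_X f$ one extracts $\Phi_c \preccurlyeq_X f$, which restricts to $\Phi_c \preccurlyeq_{X \cap \dom\,\phi_d} f$; composing this with $f \preccurlyeq_{X \cap \dom\,\phi_d} \Phi_d$ and using transitivity of $\preccurlyeq$ on the common set $X \cap \dom\,\phi_d$ yields $\Phi_c \preccurlyeq_{X \cap \dom\,\phi_d} \Phi_d$ whenever $\phi_d =_{X \cap \dom\,\phi_d} \phi_c$, which is precisely the desired conclusion.
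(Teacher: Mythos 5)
Your proof is correct and matches the paper's intended route: the paper itself presents Lemma~\ref{lm:BlumX} as a special case of Lemma~\ref{lm:BlumPhi} (deferring details to Shavrukov), and your instantiation with the cumulative function $f = \Phi_e$ for some $e$ with $\dom\,\phi_e = X$ is exactly the standard way to realize that special case. The verification that $\Phi_e$ is cumulative via step-by-step simulation and the final bookkeeping with $\preccurlyeq$ are both sound.
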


The following lemmas give several basic results about the notion of cumulativity (see \cite[Lemma 5]{Shav93a} and \cite[Lemma 6]{Shav93a}).

\begin{lemma}\label{lm:deltacumul}
	For each sentence $\gamma$ the function $\Delta^T_\gamma$ is cumulative.
\end{lemma}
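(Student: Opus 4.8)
The plan is to prove that $\Delta^T_\gamma$ is cumulative by exhibiting a $\phi$-index $a$ for $\delta^T_\gamma$ whose complexity measure $\Phi_a$ is elementarily bounded by $\Delta^T_\gamma$ on the domain of the function. Recall that cumulativity of a function $f$ with $\dom\, f = X$ means there is a $\phi$-index $a$ with $\Phi_a \preccurlyeq_X f$, i.e.\ the value can be recovered in time elementary in it. The natural candidate is the algorithm implicit in the definition of $\Delta^T_\gamma$ itself: to compute $\delta^T_\gamma(n)$, search through $T$-proofs in order of increasing g\"odelnumber until one finds a proof of $\sharp^n_T \to \gamma$ or of $\sharp^n_T \to \neg\gamma$, outputting $0$ or $1$ accordingly.

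First I would fix the sentence $\gamma$ and describe this search procedure precisely as a Turing machine, obtaining a $\phi$-index $a$ for it. The point is that the halting condition is checked by verifying the \emph{elementary} predicate $T \vdash_p \sharp^n_T \to \gamma$ (that is, $\exists q \leqslant p\, \Prf_T(q, \gn{\sharp^n_T \to \gamma})$) and its negative counterpart; since $\Prf_T$ is elementary and the construction of $\sharp^n_T$ from $n$ is elementary, each individual test at a given $p$ costs time elementary in $p$. By definition $\Delta^T_\gamma(n)$ is exactly the least $p$ witnessing one of these two disjuncts, so the machine halts precisely when $p$ reaches $\Delta^T_\gamma(n)$, and $\dom\,\phi_a = \dom\,\delta^T_\gamma$ with $\phi_a = \delta^T_\gamma$.

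Next I would bound the running time. The total work done by the machine on input $n$ is the sum, over all $p \leqslant \Delta^T_\gamma(n)$, of the cost of the elementary test at stage $p$, plus the overhead of constructing $\sharp^n_T$ and managing the search. Each such cost is elementary in $p \leqslant \Delta^T_\gamma(n)$ and there are at most $\Delta^T_\gamma(n) + 1$ stages, so the accumulated time — and hence $\Phi_a(n) = n + a + (\text{steps})$ — is bounded by an elementary function of $\Delta^T_\gamma(n)$. This yields $\Phi_a \preccurlyeq_{\dom\, \delta^T_\gamma} \Delta^T_\gamma$, which is exactly cumulativity of $\Delta^T_\gamma$.

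I expect the main obstacle to be purely a matter of careful bookkeeping rather than any deep idea: one must confirm that reconstructing $n$ and forming the g\"odelnumber $\gn{\sharp^n_T \to \gamma}$ at each stage, together with evaluating the bounded proof-search predicate, really does stay within time elementary in the current bound $p$, so that the whole computation is dominated elementarily by its own output $\Delta^T_\gamma(n)$. Since all the ingredients ($\sharp^n_T$ as a function of $n$, the predicate $\Prf_T$, and the comparison $q \leqslant p$) are elementary by the standing assumptions on the arithmetization, this verification is routine; the only subtlety is ensuring that the additive constant $a$ and the input length $n$ contributed to $\Phi_a$ are absorbed into the elementary bound, which holds since $\Delta^T_\gamma(n) \geqslant 0$ and the elementary functions are closed under the relevant operations.
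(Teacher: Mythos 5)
There is a genuine defect, though it is one of target rather than of method. The paper's definition of cumulativity of a function $f$ requires a $\phi$-index $a$ \emph{for $f$ itself} with $\Phi_a \preccurlyeq_{\dom f} f$. You announce and construct an index $a$ with $\phi_a = \delta^T_\gamma$, not $\phi_a = \Delta^T_\gamma$, and then conclude that $\Phi_a \preccurlyeq_{\dom\,\delta^T_\gamma} \Delta^T_\gamma$ ``is exactly cumulativity of $\Delta^T_\gamma$.'' It is not: what you have proved is the second half of Lemma \ref{lm:index} (a $\phi$-index for $\delta^T_\gamma$ whose complexity is elementarily bounded by $\Delta^T_\gamma$). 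The distinction matters downstream: in the proof of Theorem \ref{th:main} cumulativity of $\Delta^S_\alpha$ is combined with Lemma \ref{lm:composcumul} and fed into Lemma \ref{lm:BlumPhi}, which genuinely needs a machine whose \emph{output} is the value $\Delta^S_\alpha(n)$ computed in time elementary in that value. The repair is immediate and you should make it explicit: run the same bounded proof search, but have the machine output the least $p$ witnessing one of the two disjuncts (i.e.\ the value $\Delta^T_\gamma(n)$) instead of the bit $0$ or $1$; writing down $p \leqslant \Delta^T_\gamma(n)$ costs only time elementary in $\Delta^T_\gamma(n)$, so the complexity bound survives unchanged and the index is now an index for $\Delta^T_\gamma$.

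A second, smaller point: your closing justification for absorbing the additive terms $n + a$ in $\Phi_a(n)$, namely ``since $\Delta^T_\gamma(n) \geqslant 0$,'' does not do the job. You need $n$ to be elementarily bounded by $\Delta^T_\gamma(n)$, and the right reason is that any $T$-proof of $\sharp^n_T \to \gamma$ or $\sharp^n_T \to \neg\gamma$ contains the formula $\sharp^n_T$, whose g\"odelnumber already exceeds $n$, so $\Delta^T_\gamma(n) \geqslant n$ on the domain. With these two corrections the argument is sound and is the standard one; the paper itself gives no proof, deferring to Shavrukov, whose Lemma 5 proceeds by exactly this bounded-search construction.
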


\begin{lemma}\label{lm:composcumul}
	If the functions $F$ and $G$ are cumulative, then so is $F \circ G$.	
\end{lemma}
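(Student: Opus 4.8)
The statement to prove is Lemma~\ref{lm:composcumul}: if $F$ and $G$ are cumulative, then so is $F \circ G$.

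\medskip

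The plan is to work directly from the definition of cumulativity. Recall that $F$ is cumulative means there is a $\phi$-index $a$ for $F$ with $\Phi_a \preccurlyeq_{\dom\, F} F$, i.e.\ the cost of computing $F(x)$ is elementarily bounded by the value $F(x)$ itself; similarly fix a $\phi$-index $b$ for $G$ with $\Phi_b \preccurlyeq_{\dom\, G} G$. The natural approach is to exhibit an explicit $\phi$-index $e$ for the composition $F \circ G$ that computes $(F \circ G)(x)$ by first running the machine $b$ on input $x$ to obtain $G(x)$, and then running the machine $a$ on $G(x)$ to obtain $F(G(x))$. The domain of $\phi_e$ is exactly $\dom(F \circ G) = \{x : x \in \dom\, G \text{ and } G(x) \in \dom\, F\}$, as required for an index of the composite partial function.

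\medskip

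The core of the argument is the cost estimate $\Phi_e \preccurlyeq_{\dom(F \circ G)} F \circ G$. First I would bound the running time $\Phi_e(x)$ by something elementary in the two component running times: the total number of steps (plus input plus index) in the concatenated computation is elementary in $\Phi_b(x)$ and $\Phi_a(G(x))$, since we pay to compute $G(x)$, then feed it to machine $a$. Next I would invoke the two cumulativity hypotheses: $\Phi_b(x) \preccurlyeq_{\dom\, G} G(x)$ and $\Phi_a(G(x)) \preccurlyeq_{\dom\, F} F(G(x)) = (F\circ G)(x)$, the latter evaluated at the point $G(x) \in \dom\, F$. Chaining these through the elementary bound on $\Phi_e$, and using that elementary functions are closed under composition and that $G(x) \leqslant F(G(x))$ need not hold but $\Phi_b(x)$ is already bounded elementarily in $G(x) \leqslant$ some elementary function of $(F\circ G)(x)$, I would conclude $\Phi_e(x) \leqslant t((F \circ G)(x))$ for an elementary $t$ on the common domain.

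\medskip

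The main obstacle is the bookkeeping around the intermediate value $G(x)$, which appears as an \emph{input} to machine $a$ rather than as an output, so its size must be controlled on the right-hand side. The point is that $\Phi_a(G(x))$ already includes $G(x)$ as a summand (by the definition of $\Phi$ as input plus index plus steps), and cumulativity of $F$ gives $\Phi_a(G(x)) \preccurlyeq_{\dom\, F} F(G(x))$; hence $G(x) \leqslant \Phi_a(G(x)) \leqslant q(F(G(x)))$ for an elementary $q$, so $G(x)$ itself is elementarily bounded by $(F\circ G)(x)$, and therefore so is $\Phi_b(x)$ via cumulativity of $G$. Once this is observed, every quantity entering $\Phi_e(x)$ is elementarily bounded by $(F \circ G)(x)$, and closure of the elementary functions under composition finishes the proof. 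I would take care that all these bounds are asserted only on the correct domain $\dom(F \circ G)$, where all the intermediate values are in fact defined.
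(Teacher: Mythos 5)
Your proof is correct, and it is essentially the standard argument that the paper delegates to \cite[Lemma 6]{Shav93a}: compose the two cumulative indices and bound $\Phi_e(x)$ elementarily in $\Phi_b(x)$ and $\Phi_a(G(x))$. You correctly identify and resolve the one genuine subtlety, namely that the intermediate value $G(x)$ is controlled via $G(x) \leqslant \Phi_a(G(x)) \preccurlyeq F(G(x))$ (using that $\Phi_a(y) \geqslant y$ by definition of the complexity measure) rather than by any unwarranted assumption such as $G(x) \leqslant F(G(x))$.
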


We now have everything we need to prove the following main theorem, which is a mentioned strengthening of Adamsson's result.

\begin{theorem}\label{th:main}
	Let $T$ and $S$ be theories with standard provability predicates $\Box_T$ and $\Box_S$.
	If there is an epimorphism from $\fD_T$ onto $\fD_S$ then
	for each $k \in \nat$ there is a function $G(x) \in \cF^{k+1}_{S}(\EA)$ which is not dominated by any function 
	$F(x) \in \cF^k_{T}(T)$.
	More precisely, there is an elementary function $q(x)$ such that for each $k \in \nat$, for each function $F(x) \in \cF^k_{T}(T)$ the following inequality
	$$
	F(n) \leqslant \sR^{(k+1)}_S(q(n))
	$$
	holds for infinitely many $n \in \nat$.
\end{theorem}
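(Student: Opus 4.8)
The plan is to follow Shavrukov's and Adamsson's strategy but to push the complexity-theoretic bookkeeping further, exploiting the machinery of the classes $\cF^k_{T}(U)$ and the reflection-composition lemmas developed in Section 3. Suppose $(\cdot)^* \colon \fD_T \to \fD_S$ is an epimorphism; I will fix $k \in \nat$ and a function $F(x) \in \cF^k_{T}(T)$ and aim to show $F(n) \leqslant \sR^{(k+1)}_S(q(n))$ infinitely often, for a single elementary $q$ independent of $k$ and $F$. By Lemma \ref{lm:cumuldom} I may assume $F$ is cumulative (replacing $F$ by a cumulative dominating function in the same class only strengthens the conclusion). The first main step is to encode $F$, together with the complexity witness given by cumulativity, into the computation model $\delta^T_\gamma$ attached to $\fD_T$: using the $\phi$-to-$\delta^T$ translation of Lemma \ref{lm:index} and the compression theorem in the form of Lemma \ref{lm:BlumPhi}, I would produce a sentence $\gamma$ whose complexity measure $\Delta^T_\gamma$ matches $F$ up to $\approx$, and whose domain is a chosen r.e.\ set large enough to contain the relevant inputs (e.g.\ the numbers coded as $\sharp^n_T$).

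The second main step is the transfer across the epimorphism. Since $(\cdot)^*$ is a homomorphism, $\delta^S_{\gamma^*}$ extends $\delta^T_\gamma$, so the function $F$, now realized as $\Delta^T_\gamma$ up to elementary transformation, reappears as a lower bound problem on the $S$-side via $\Delta^S_{\gamma^*}$; here I lose the exact proof-size correspondence, which is precisely the loss noted after Lemma \ref{lm:index}. The key is that on the $S$-side the complexity measure $\Delta^S_{\gamma^*}$ is cumulative (Lemma \ref{lm:deltacumul}), and Shavrukov's compression lemma forces $\Delta^S_{\gamma^*}$ to be bounded, on the common domain, by $\Phi_a$ for any competing $\delta^S$-index $a$ computing the same function. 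Applying this together with Lemma \ref{lm:index} in the $S$-direction lets me bound the behaviour of $F$ in terms of $S$-proof sizes, i.e.\ in terms of $\sR_S$ iterated some number of times. The role of the exponent $k+1$ rather than a fixed constant like $4$ is that the original $F \in \cF^k_{T}(T)$ already ``costs'' $k$ applications of $\sR_T$ to unwind by Proposition \ref{prop:Fboxdef}, i.e.\ $\cF^k_{T}(T) = \cF(T) \circ \{\sR^{(k)}_T\} \circ \cF(T)$, and the passage through the homomorphism together with the disjunction-decoding of Lemmas \ref{lm:proofbox} and \ref{lm:disjR} adds one further application of $\sR_S$, landing in $\sR^{(k+1)}_S$.

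The third step is to absorb all the auxiliary elementary blow-ups into a single elementary function $q$. This is where Corollary \ref{cor:fRltRc} is decisive: the various $\cF(T)$-functions accumulated along the way (the cumulativity witness, the translations of Lemma \ref{lm:index}, the functions $u$, $v$, $w$ of Lemmas \ref{lm:proofbox}--\ref{lm:fRltRc}) can each be pushed through the iterated reflection function at the cost of applying the fixed elementary $h$, since $f(\sR^{(k+1)}_S(n)) \leqslant \sR^{(k+1)}_S(h(n))$ for every $f \in \cF(T)$ and all large $n$. Iterating $h$ a bounded number of times and composing with the elementary translations yields the uniform $q$; crucially the number of such compositions does not depend on $k$, because Corollary \ref{cor:fRltRc} already handles arbitrary $k$ uniformly with the same $h$. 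I would then read off that $F(n) \leqslant \sR^{(k+1)}_S(q(n))$ holds infinitely often, and invoke Proposition \ref{prop:boxkRk} to note $\sR^{(k+1)}_S(q(x)) \in \cF^{k+1}_S(\EA)$, giving the first, less precise formulation with the witnessing $G(x) = \sR^{(k+1)}_S(q(x))$.

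The hard part will be the second step: managing the information loss across the epimorphism while still extracting a clean lower bound, and in particular verifying that the compression inequality can be applied on exactly the domain where $\delta^S_{\gamma^*}$ and the intended $S$-side index agree. The delicate point is that an epimorphism, unlike an isomorphism, only gives that $\delta^S_{\gamma^*}$ \emph{extends} $\delta^T_\gamma$, so I must choose the r.e.\ domain $X$ in the compression step (Lemma \ref{lm:BlumX}) carefully and argue that the relevant infinitely-many inputs $n$ lie in $X \cap \dom\, \delta^S_{\gamma^*}$, so that the $\preccurlyeq_X$ bounds actually bite there; the ``infinitely many $n$'' in the conclusion is exactly the trace of this domain restriction. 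I expect the bookkeeping of which elementary factors appear, and checking that they all fall under the scope of Corollary \ref{cor:fRltRc} so as to be swallowed by a single $q$, to be the most error-prone but ultimately routine part.
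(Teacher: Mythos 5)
Your proposal gets the outer frame right (the $\delta^T_\gamma$ computation model, Blum compression via Lemmas \ref{lm:index}--\ref{lm:BlumX}, transfer along the homomorphism, absorption of elementary overhead through Corollary \ref{cor:fRltRc}, and the final appeal to Proposition \ref{prop:boxkRk}), but the central step is missing, and the structure you describe cannot produce it. You work with a \emph{single} sentence $\gamma$ chosen so that $\Delta^T_\gamma \approx F$, and then pass to $\gamma^*$. Compression plus the extension property $\delta^T_\gamma \subseteq \delta^S_{\gamma^*}$ only ever gives a \emph{lower} bound of the form $F \approx \Phi_b \preccurlyeq \Phi_a \preccurlyeq \Delta^S_{\gamma^*}$; it says nothing about $\Delta^S_{\gamma^*}$ from above, and for an arbitrary sentence on the $S$-side there is simply no bound on its $\Delta^S$ in terms of $\sR_S$. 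The paper's proof needs \emph{two} sentences: a reference sentence $\alpha$ chosen on the $S$-side as a $\delta^S$-index for a compressed function with nonrecursive r.e.\ domain $X$, pulled \emph{back} to some $A$ with $A^* = \alpha$ (this is the only, and essential, use of surjectivity --- your sketch never uses surjectivity at all); and a hand-crafted sentence $B$ realizing the compression of $F \circ \Delta^S_\alpha$ (not of $F$) whose defining formula $B(x)$ is rigged so that $T$ proves
$$
\Box_T\left(\sharp^x_T \to A \right) \lor \Box_T \left(\sharp^x_T \to \neg A \right) \to
\Box^{(k)}_T \left(\Box_T\left(\sharp^x_T \to B\right) \lor \Box_T \left(\sharp^x_T \to \neg B\right) \right).
$$
Applying the homomorphism to this $T$-theorem is what yields the upper bound $\Delta^S_\beta \preccurlyeq_Y \sR_S \circ v \circ \sR^{(k)}_S \circ r \circ \Delta^S_\alpha$; the two inequalities $F \circ \Delta^S_\alpha \preccurlyeq_X \Delta^S_\beta \preccurlyeq_Y \sR^{(k+1)}_S \circ q \circ \Delta^S_\alpha$ then cancel $\Delta^S_\alpha$ on the set $Z = \{\Delta^S_\alpha(n) \mid n \in Y\}$. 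None of this formalization-inside-$T$ machinery (the modified graph $\psi(x,y)$ of $\Delta^S_\alpha$, the $\Sigma_1$-completeness case analysis on $\Phi_b(x)$, Lemma \ref{lm:Fkuniq}) appears in your plan, and it is not routine bookkeeping --- it is where the theorem is actually proved.

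A second, smaller but genuine error: you attribute the ``infinitely many $n$'' to the domain restriction caused by the epimorphism only giving an extension of $\delta^T_\gamma$. That is not where it comes from. The transferred implication holds for \emph{every} $n$, with some total recursive proof-size function $j(n)$; the point is that $Y = \{n \in X \mid j(n) \leqslant \Delta^S_\alpha(n)\}$ must be infinite, because otherwise the nonrecursive set $X$ would be decidable by comparing $j(n)$ with a bounded proof search. This recursion-theoretic pivot --- which is why $X$ has to be chosen nonrecursive in the first place --- is absent from your proposal, and without it you have no infinite set on which the final inequality holds.
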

\begin{proof}
	We follow Adamsson's proof (which itself follows Shavrukov's proof) very closely and make certain changes to obtain sharper bounds. The general line of the proof is unchanged, so we mainly concentrate on the parts where the modifications are made.

	Assume there is an epimorphism 
	$(\cdot )^* \colon \fD_T \to \fD_S$. As usual we consider $(\cdot)^*$ as a mapping on the set of sentences up to provable equivalence and write just $A^*$ for an arbitrary representative of the equivalence class $([A]_{\sim_T})^*$. 
	By Proposition \ref{lm:cumuldom} it is sufficient to get the conclusion of the theorem for cumulative functions. So fix an arbitrary cumulative function $F(x) \in \cF^k_{T}(T)$. 
	The proof consists of two main parts, which are embodied in the following two inequalities
	$$
	F \circ \Delta^S_\alpha \preccurlyeq_X \Delta^S_\beta \preccurlyeq_Y \sR^{(k+1)}_S \circ q \circ \Delta^S_\alpha,
	$$
	for some infinite sets $Y \subseteq X \subseteq \nat$, whence the result follows.

	Let $X$ be some r.e.\,nonrecursive set. Fix a  $\phi$-index $c$ given by Lemma \ref{lm:BlumX} for a partial recursive $\{0,1\}$-valued function such that $X = \dom\, \phi_c$ and $\Phi_c \preccurlyeq_{X \cap \dom\, \phi_d} \Phi_d$, whenever $\phi_d =_{X \cap \dom\, \phi_d} \phi_c$. Using Lemma \ref{lm:index}
	find a $\delta^S$-index $\alpha$ such that $\phi_c = \delta^S_\alpha$ and $\Delta^S_\alpha \preccurlyeq_X \Phi_c$. Since $(\cdot)^*$ is surjective, there is some sentence $A$ with $A^* = \alpha$. Fix such a sentence $A$. 
	This auxiliary set $X$ and the corresponding sentences $\alpha$ and $A$ are needed in order to preserve some part of the information concerning the size of proofs, which otherwise gets lost when we move from $T$ into $S$ using the homomorphism (which is one of the most significant places in the following proof). 
	Since $(\cdot)^*$ is a homomorphism, the function $\delta^S_\alpha$ is an extension of $\delta^T_A$. By the choice of $\alpha$ we have $\Delta^S_\alpha \preccurlyeq_X \Phi_c  \preccurlyeq_{\dom\, \delta^T_A}
	\Phi_d \preccurlyeq_{\dom\, \delta^T_A} \Delta^T_A$,
	where $d$ is some $\phi$-index for $\delta^T_A$ given by Lemma~\ref{lm:index}. Fix some strictly monotone elementary function $p(x)$ such that $\Delta^S_\alpha \leqslant_{\dom\, \delta^T_A} p \circ \Delta^T_A$.
	
	In order to be able to formalize certain facts about the function $\Delta^S_\alpha(x)$ inside $T$
	we consider the following modified program ($\Sigma_1$-formula defining the graph) for this function. 
	Define $\psi(x, y)$ as $\psi'(x, y) \land \forall z < y\, \neg \psi'(x, z)$, where 
	\begin{align*}
	\psi'(x, y) \leftrightharpoons 
	\Prf_S(y, \gn{\sharp^x_S \to \alpha}) \lor 
	\Prf_S(y, \gn{\sharp^x_S \to \neg \alpha})\ &\lor\\
	\lor\ \exists z \leqslant y\, \left( 
	p(z) = y \lor 
	\Prf_T(y, \gn{\sharp^x_T \to A})\right. &\lor \left.
	\Prf_T(y, \gn{\sharp^x_T \to \neg A})
	\right).
	\end{align*}
	Informally, $\psi(x, y)$ asserts that $y$ is the least number such that
	$\Delta^S_\alpha(x) = y$ or $y = p(z)$ for some $z$ such that $\Delta^T_A(x) = z$. Note that $\psi(x, y)$ is an elementary formula.
	
	The inequality $\Delta^S_\alpha \leqslant_{\dom\, \delta^T_A} p \circ \Delta^T_A$ implies that the formula $\psi(x, y)$ indeed defines the graph of $\Delta^S_\alpha(x)$ in the standard model.
	Note that (the least element principle for elementary formulas available in $T$ is used here)
	$$
	T \vdash \forall x \left( \exists z \left(p(\Delta^T_A(x)) = z \right) \to \exists y\, \psi(x, y)\right),
	$$
	here $p(\Delta^T_A(x)) = z$ is an elementary formula defining the graph of $p(\Delta^T_A(x))$ in the most natural way.
	By the definition of $\psi(x, y)$ we also have
	$$
	\EA \vdash \forall x\, \forall y_1\, \forall y_2 \left(
	\psi(x, y_1) \land \psi(x, y_2) \to y_1 = y_2
	\right).
	$$
	
	Consider the function $F \circ \Delta^S_\alpha$, which is cumulative by Lemma \ref{lm:composcumul}, Lemma \ref{lm:deltacumul} and since $F(x)$ is assumed to be cumulative. By Lemma \ref{lm:BlumPhi} there is a $\phi$-index $b$ for a partial recursive $\{0,1\}$-valued function such that
	$F \circ \Delta^S_\alpha \approx_X \Phi_b$ and $\Phi_b \preccurlyeq_{X \cap \dom\, \phi_a} \Phi_a$, given $\phi_a =_{X \cap \dom\, \phi_a} \phi_b$.
	
	By Lemma \ref{lm:index} there is some $\delta^S$-index $\beta$ with $\phi_b = \delta^S_\beta$, whence $F \circ \Delta^S_\alpha \preccurlyeq_X \Delta^S_\beta$ which gives us the first of the two main inequalities, however, an arbitrary chosen sentence $\beta$ with this property bears no relationship to the sentence $\alpha$, and we will not be able to obtain the second inequality, which requires that $\Delta^S_\beta$ grows not too much faster than $\Delta^S_\alpha$. That's why we need a specifically constructed sentence with this property in order to be able to formalize in $T$ certain results, which together with the homomorphism yield the desired relationship between $\Delta^S_\alpha$ and~$\Delta^S_\beta$.

	We define a specific sentence $B$ such that $\Delta^T_B$ is $T$-provably related to $\Delta^T_A$ (the precise meaning of this is given below), whence $\Delta^T_\beta$ will be $S$-provably related to $\Delta^S_\alpha$, where $\beta = B^*$. This will allow us to obtain an upper bound on $\Delta^S_\beta$ in terms of $\Delta^S_\alpha$, as required.

	Fix an elementary function $s(x)$ such that 
	$\Phi_b \leqslant_X s \circ F \circ \Delta^S_\alpha,$
	and define the formula
	$$
	B(x) \leftrightharpoons \exists u \left(s(F(\Delta^S_\alpha(x))) = u \land \Phi_b(x) \leqslant u \right) \to \phi_b(x) = 0,
	$$
	where we express the relation $s(F(\Delta^S_\alpha(x))) = u$
	by the following $\Sigma_1$-formula
	$$
	\exists y\, \exists z \left(
	\psi(x, y) \land \chi(y, z) \land \theta(z, u)
	\right),
	$$
	where $\theta(x, y)$ is a $\Sigma_1$-formula defining the graph of $s(x)$ such that $\EA \vdash \forall x\, \exists ! y\, \theta(x, y)$, and
	$\chi(x, y)$ is a $\Sigma_1$-formula given by Lemma \ref{lm:Fkuniq} applied to the function $F(x)$. In particular, we have
	$T \vdash \forall x\, \Box^{(k)}_T(\exists y\, \chi(\num{x}, y))$, and  $ \EA \vdash \forall x\, \forall y_1\, \forall y_2 \left(\chi(x, y_1) \land \chi(x, y_2) \to y_1 = y_2\right)$.
	Since the witness for $\exists y\, \psi(x, y)$ is also $\EA$-provably unique by the definition of $\psi(x, y)$, it follows that
	$$
	\EA \vdash 
	\forall x\, \forall u_1\, \forall u_2 \left(
	\left(s(F(\Delta^S_\alpha(x))) = u_1 \land s(F(\Delta^S_\alpha(x))) = u_2
	\right) \to u_1 = u_2 \right).
	$$
	Note that we represent the graph of   $\Delta^S_\alpha(x)$ not in the natural way, but using a modified formula $\psi(x, y)$ constructed above. This will become important later when we formalize certain parts of the following argument in $T$.

	Define the sentence $B \leftrightharpoons \forall x \left(\sharp^x_T \to B(x)\right)$.
	We claim that $\delta^T_B =_X \phi_b$. Given $n \in X$, we have 
	$s(F(\Delta^S_\alpha(n))) = m$ and $\Phi_b(n) \leqslant m$, for some $m \in \nat$ (since $\dom\, \Delta^S_\alpha = X$ and by the choice of $s(x)$), whence the antecedent of $B(\num{n})$ is true and provable (being a $\Sigma_1$-sentence) and 
	$$
	T \vdash B(\num{n}) \eqv \phi_b(\num{n}) = 0.
	$$
	If $\phi_b(n) = 0$, then $T \vdash B(\num{n})$,
	since $T$ proves (a true $\Sigma_1$-sentence) $\phi_b(\num{n}) = 0$.
	In this case
	\begin{align*}
	T \vdash \sharp^n_T &\to \forall x \left( \sharp^x_T \to \left(x = \num{n} \land B(\num{n})\right) \right)\\
	&\to \forall x \left( \sharp^x_T \to B(x)\right)\\
	&\to B,
	\end{align*} 
	whence $\delta^T_B(n) = 0$. If $\phi_b(n) = 1$, then $T \vdash \neg B(\num{n})$, since $T$ proves (a true $\Sigma_1$-sentence) $\phi_b(\num{n}) = 1$ which implies $\neg (\phi_b(\num{n}) = 0)$.
	In this case
	\begin{align*}
	T \vdash \sharp^n_T &\to \left(\sharp^n_T \land \neg B(\num{n}) \right)\\
	&\to \exists x \left( \sharp^x_T \land \neg B(x)\right)\\
	&\to \neg B,
	\end{align*} 
	whence $\delta^T_B(n) = 1$. Since $(\cdot)^*$ is a homomorphism,
	for $\beta = B^*$ we have $\delta^T_B \subseteq \delta^S_\beta$,
	and so $\phi_b =_X \delta^T_B =_X \delta^S_\beta$. The choice of $b$ implies that 
	$$
	F \circ \Delta^S_\alpha \approx_X \Phi_b \preccurlyeq_X \Phi_a \preccurlyeq_X \Delta^S_\beta,
	$$
	where $a$ is a $\phi$-index of $\delta^S_\beta$ given by Lemma \ref{lm:index}, which gives the first of the two main inequalities
	$F \circ \Delta^S_\alpha \preccurlyeq_X \Delta^S_\beta$.
	
	In order to be able to obtain an upper bound on $\Delta^S_\beta$ in terms of $\Delta^S_\alpha$ we need to formalize in $T$ the relationship between $\Delta^T_A$ and $\Delta^T_B$ present in the proof of the claim $\delta^T_B =_X \phi_b$ given above. 
	We have the following derivation
	\begin{align*}
	T \vdash \forall x\, \Bigl(\Box_T \left(\sharp^x_T \to A \right) \lor \Box_T \left(\sharp^x_T \to \neg A \right) \Bigr. &\to \exists z\, (p(\Delta^T_A(x)) = z) 
	\\
	&\to \exists y\, \psi(x, y)\\
	&\to \Box^{(k)}_T \exists u \left(s(F(\Delta^S_\alpha(\num{x}))) = u\right)\\
	&\to \Box^{(k)}_T \left(\Box_T B(\num{x}) \lor \Box_T \neg B(\num{x})\right)\\
	&\to \left. \Box^{(k)}_T \left(\Box_T\left(\sharp^x_T \to B\right) \lor \Box_T \left(\sharp^x_T \to \neg B\right) \right)\right).
	\end{align*}
	Let us make some comments on each of the implications above. The first one follows by the definition of $\Delta^T_A(x)$ (and we use the most natural formalization of it inside $T$), elementary induction (to get the existence of the least $T$-proof) and $T$-provable totality of $p(x)$ (since $p(x)$ is elementary). The second one was discussed right after the definition of $\psi(x, y)$ (this is the place where we need a modified definition of the graph of $\Delta^S_\alpha(x)$ and not the natural one we used for $\Delta^T_A(x)$ in order for the formalization to go through). As for the third one, we use 
	provable $\Sigma_1$-completeness (if $k \geqslant 1$), 
	the above property of $\chi_F(x, y)$
	and $T$-provable totality of (an elementary function) $s(x)$ to get 
	\begin{align*}
	T \vdash \forall x\, \forall y\, \Bigl(
	\psi(x, y) \Bigr. &\to \Box^{(k)}_T \left(
	\psi(\num{x}, \num{y}) \land \exists z \left(\chi_F(\num{y}, z) \right) 
	\right) \\
	&\to \left. \Box^{(k)}_T \exists u \left(s(F(\Delta^S_\alpha(\num{x}))) = u\right)\right).
	\end{align*}
	The proof of the final implication is unchanged (as compared to the argument above), since it uses only basic predicate logic and provable properties of the formulas $\sharp^x_T$.
	
	Now, for the fourth implication we need to be more careful, since we do not have the facts like $\Phi_B \preccurlyeq_X s \circ F \circ \Delta^S_\alpha$ and $\dom\, \phi_b = X$ inside $T$. In the following argument we rely on the fact that the ternary relation $\Phi_e(x) \leqslant y$ is expressed using an elementary formula.
	We argue informally under $\Box_T$ as follows. Assume $\exists u \left(s(F(\Delta^S_\alpha(\num{x}))) = u \right)$. Fix such a number $u$ and consider two cases: $\Phi_b(\num{x}) \leqslant u$ (in this case $\phi_b(\num{x})$ is defined) and $\Phi_b(\num{x}) > u$ (in this case it may be that $\phi_b(\num{x})$ is undefined). Assume $\Phi_b(\num{x}) \leqslant u$, we then have $\exists y \left(\phi_b(\num{x}) = y\right)$. Let us fix such a $y$ and consider two more cases: $y = 0$ or $y \neq 0$. In the first case we get 
	$\Box_T \left(\phi_b(\num{x}) = 0\right)$ 
	using provable $\Sigma_1$-completeness,
	whence $\Box_T B(\num{x})$ follows. In the second case we also use provable $\Sigma_1$-completeness to get 
	$$
	\Box_T \left(
	s(F(\Delta^S_\alpha(\num{x}))) = \num{u} \land \Phi_b(\num{x}) \leqslant \num{u} \land \phi_b(\num{x}) = \num{y} \land \num{y} \neq 0
	\right),
	$$
	which implies $\Box_T\neg B(\num{x})$. Now assume $\Phi_b(\num{x}) > u$. Again, using provable $\Sigma_1$-completeness we obtain
	$\Box_T \left(
	s(F(\Delta^S_\alpha(\num{x}))) = \num{u} \land 
	\Phi_b(\num{x}) > \num{u}
	\right)$, whence 
	$$
	\Box_T \forall z \left( s(F(\Delta^S_\alpha(\num{x}))) = z \to \Phi_b(\num{x}) > z\right),
	$$
	which implies $\Box_T \neg B(\num{x})$. Here we rely on the fact that
	$$
	\Box_T \forall z \left(s(F(\Delta^S_\alpha(\num{x}))) = z \to z = \num{u} \right),
	$$
	since we have $\Box_T \left(s(F(\Delta^S_\alpha(\num{x}))) = \num{u}\right)$ and by $\EA$-provable uniqueness of the witness to the formula
	$\exists u \left(s(F(\Delta^S_\alpha(x))) = u \right)$, which was discussed above when it has been defined.

	Thus, we have the following 
	$$
	T \vdash \forall x \left(\Box_T \left(\sharp^x_T \to A \right) \lor \Box_T \left(\sharp^x_T \to \neg A \right) \to 
	\Box^{(k)}_T \left(\Box_T\left(\sharp^x_T \to B\right) \lor \Box_T \left(\sharp^x_T \to \neg B\right) \right)\right).
	$$
	In particular, for each $n \in \nat$ we have
	$$
	T \vdash \Box_T \left(\sharp^n_T \to A \right) \lor \Box_T \left(\sharp^n_T \to \neg A \right) \to 
	\Box^{(k)}_T \left(\Box_T\left(\sharp^n_T \to B\right) \lor \Box_T \left(\sharp^n_T \to \neg B\right) \right).
	$$
	Applying the homomorphism, for each $n \in \nat$ we obtain
	$$
	S \vdash \Box_S \left(\sharp^n_S \to \alpha \right) \lor \Box_S \left(\sharp^n_S \to \neg \alpha \right) \to 
	\Box^{(k)}_S \left(\Box_S\left(\sharp^n_S \to \beta\right) \lor \Box_S \left(\sharp^n_S \to \neg \beta\right) \right).
	$$
	Denote by $j(x)$ the total (by the above) recursive (since $S$ is r.e.) function which maps $n$ to the g{\"o}delnumber of the least $S$-proof of the above implication for this $n$. For each $n \in \nat$ we have
	$$
	S \vdash_{j(n)} \Box_S \left(\sharp^n_S \to \alpha \right) \lor \Box_S \left(\sharp^n_S \to \neg \alpha \right) \to 
	\Box^{(k)}_S \left(\Box_S\left(\sharp^n_S \to \beta\right) \lor \Box_S \left(\sharp^n_S \to \neg \beta\right) \right).
	$$
	Since $j(x)$ is total, the set  $\{n \in X \mid j(n) > \Delta^S_\alpha(n)\}$ is recursive: given $n$, compute $j(n)$ and search for an $S$-proof of $\sharp^n_S \to \alpha$ or $\sharp^n_S \to \neg\alpha$ with g{\"o}delnumber less than $j(n)$.
	It follows that the set $Y = \{n \in X \mid j(n) \leqslant \Delta^S_\alpha(n)\}$ is nonrecursive and, in particular, infinite,
	since, otherwise, the set $X = \{n \in X \mid j(n) > \Delta^S_\alpha(n)\} \cup \{n \in X \mid j(n) \leqslant \Delta^S_\alpha(n)\}$ would be recursive.
	
	Fix an elementary function $u(x)$ given by Lemma \ref{lm:proofbox}.
	Let us estimate the g{\"o}delnumber of an $S$-proof of
	$\Box^{(k)}_S \left(\Box_S\left(\sharp^n_S \to \beta\right) \lor \Box_S \left(\sharp^n_S \to \neg \beta\right) \right)$ for $n \in Y$. The proof can be obtained as follows: concatenate the proof of implication and its antecedent and apply modus ponens. Let $m(x, y)$ be an elementary function which, given the upper bounds $x$ and $y$ on the proofs of implication and its antecedent, outputs an upper bound on a g{\"o}delnumber of a proof of the consequent. We may assume $m(x, y)$ to be monotone in both arguments. The g{\"o}delnumber of the required $S$-proof is then bounded by
	$$
	m(j(n), u(\Delta^S_\alpha(n))) \leqslant m(\Delta^S_\alpha(n), u(\Delta^S_\alpha(n))) \leqslant r(\Delta^S_\alpha(n)),
	$$ 
	since $j(n) \leqslant \Delta^S_\alpha(n)$ for $n \in Y$.
	Here $r(x)$ is some strictly monotone elementary bound on $m(x, u(x))$
	(e.g., $r(x) = 2^x_l$ for some fixed $l \in \nat$). It is important to note that the function $r(x)$ does not depend on $F(x)$ (and can be fixed ahead of the proof).
	For each $n \in Y$ we then have
	$$
	S \vdash_{r(\Delta^S_\alpha(n))} \Box^{(k)}_S \left(\Box_S\left(\sharp^n_S \to \beta\right) \lor \Box_S \left(\sharp^n_S \to \neg \beta\right) \right),
	$$
	whence  
	$
	S \vdash_{\sR^{(k)}_S(r(\Delta^S_\alpha(n)))} \Box_S\left(\sharp^n_S \to \beta\right) \lor \Box_S \left(\sharp^n_S \to \neg \beta\right).
	$
	Let $v(x)$ be an elementary function given by Lemma \ref{lm:disjR}.
	It follows that for each $n \in Y$ we have 
	$$
	\Delta^S_\beta(n) \leqslant \sR_S(v(\sR^{(k)}_S(r(\Delta^S_\alpha(n))))),
	$$
	so $\Delta^S_\beta \preccurlyeq_Y \sR_S \circ 
	v \circ \sR^{(k)}_S \circ r$.
	Finally, combining this inequality with $F \circ \Delta^S_\alpha \preccurlyeq_X \Delta^S_\beta$ (which has been proved above) we obtain
	$$
	F \circ \Delta^S_\alpha \preccurlyeq_X \Delta^S_\beta \preccurlyeq_Y
	\sR_S \circ v \circ \sR^{(k)}_S \circ r \circ \Delta^S_\alpha.
	$$	
	Fix an elementary function $t(x)$ (this is the only function on the right-hand side of the following inequality which depends on $F(x)$) such that 
	$$
	F \circ \Delta^S_\alpha \leqslant_Y t \circ \sR_S \circ v \circ \sR^{(k)}_S \circ r \circ \Delta^S_\alpha.
	$$
	It follows that 
	$F \leqslant_Z t \circ \sR_S \circ v \circ \sR^{(k)}_S \circ r$, 
	where $Z = \{ \Delta^S_\alpha(n) \mid n \in Y\}$ is an infinite set, since the value $\Delta^S_\alpha(n)$ is a g{\"o}delnumber of an $S$-proof of a certain sentence depending on $n$, so this proofs are all different for distinct values of $n$.
	
	What is left to show then is that the right-hand side of the above inequality can be dominated by the function $\sR^{(k+1)}_S \circ q$ for some fixed elementary function $q(x)$ (which does not depend on the function $F(x)$).
	Let $h(x)$ be an elementary function given by Corollary \ref{cor:fRltRc}.
	We have 
	$t(\sR_S(y)) \leqslant \sR_S(h(y))$
	for all sufficiently large $y$, whence
	$$
	t(\sR_S(v(\sR^{(k)}_S(r(n))))) \leqslant 
	\sR_S(h(v(\sR^{(k)}_S(r(n)))))
	$$
	for all sufficiently large $x$, since $v(\sR^{(k)}_S(r(x)))$ is monotone and unbounded (as a composition of such functions). Define the function $q(x)$ to be $h(v(r(x)))$. If $k = 0$, we are done since the right-hand side of the inequality above is $\sR_S(q(x))$.
	If $k \geqslant 1$, using the same corollary we get
	$$
	h(v(\sR^{(k)}_S(r(n)))) \leqslant \sR^{(k)}_S(h(r(n)))
	$$
	for all sufficiently large $n$, since $r(x)$ is strictly monotone.
	Combining both inequalities above together with monotonicity of $\sR_S(x)$ and strict monotonicity of $h(x), v(x)$ and $r(x)$ we get
	$$
	t(\sR_S(v(\sR^{(k)}_S(r(n))))) \leqslant 
	\sR^{(k+1)}_S(h(r(n))) \leqslant 
	\sR^{(k+1)}_S(h(v(r(n)))) = \sR^{(k+1)}_S(q(n)),
	$$
	for all sufficiently large $n$, as required. 
	
	Finally, we have
	$\sR^{(k+1)}_S(q(n)) \in \cF^{k+1}_S(\EA)$ by Proposition~\ref{prop:boxkRk}.	
\end{proof}

The following corollary contains several sufficient conditions for the non-existence of an epimorphism from $\fD_T$ onto $\fD_S$, which are more convenient to use in applications of Theorem~\ref{th:main}.
\begin{corollary}\label{cor:RS2}
	Each of the following conditions implies that there are no epimorphisms from $\fD_T$ onto $\fD_S$,
	\begin{enumerate}
		\item [$(i)$] 
		$\sR^{(k+1)}_S(x) \in \cF^k_{T}(T)$ for some $k \in \nat$;

		\item [$(ii)$] $\cF^{k+1}_{S}(\EA) \subseteq \cF^k_{T}(T)$ for some $k \in \nat$;
		
		\item [$(iii)$] $T \vdash \forall \sigma \in \Sigma_1 \left(\Box^{(k+1)}_S \sigma \to \Box^{(k)}_T \sigma \right)$ for some $k \geqslant 1$, or $T \vdash \forall \sigma \in \Sigma_1 \left(\Box_S \sigma \to \mathsf{Tr}_{\Sigma_1}(\sigma) \right)$.
	\end{enumerate}
\end{corollary}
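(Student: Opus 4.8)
The plan is to derive all three conditions from the contrapositive of Theorem~\ref{th:main}, organizing the argument as a chain of implications $(iii)\Rightarrow(ii)\Rightarrow(i)\Rightarrow(\text{no epimorphism})$, so that the only substantive work is the last link.

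The heart of the matter is showing that $(i)$ rules out an epimorphism. Assume $\sR^{(k+1)}_S\in\cF^k_{T}(T)$ for the given $k$ and suppose, toward a contradiction, that an epimorphism from $\fD_T$ onto $\fD_S$ exists. Let $q(x)$ be the fixed elementary function supplied by the precise form of Theorem~\ref{th:main}. Since $q$ is elementary it lies in $\cF(T)=\cF^0_{T}(T)$, and so does the successor function; applying Lemma~\ref{lm:Fkcompos} twice (with $T$ in the role of $U$, composing $\sR^{(k+1)}_S$ first with $q$ and then with the successor) I would obtain that
$$
F(x):=\sR^{(k+1)}_S(q(x))+1
$$
belongs to $\cF^{k}_{T}(T)$. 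But Theorem~\ref{th:main} asserts $F(n)\leqslant\sR^{(k+1)}_S(q(n))$ for infinitely many $n$, whereas by construction $F(n)>\sR^{(k+1)}_S(q(n))$ for every $n$ --- a contradiction. This is the step where the precise, $\sR$-explicit form of the main theorem (rather than its qualitative form) is essential.

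For $(ii)\Rightarrow(i)$ I would simply invoke Proposition~\ref{prop:boxkRk}, which gives $\sR^{(k+1)}_S\in\cF^{k+1}_{S}(\EA)$; the assumed inclusion $\cF^{k+1}_{S}(\EA)\subseteq\cF^k_{T}(T)$ then yields $(i)$ at once. For $(iii)\Rightarrow(ii)$ I would argue by instantiating the universally quantified reflection statement. Take any $G\in\cF^{k+1}_{S}(\EA)$ with $\Sigma_1$-graph $\psi_G$, so that $\EA\vdash\forall x\,\Box^{(k+1)}_S\exists y\,\psi_G(\num{x},y)$ and hence the same holds in $T$. In the first case of $(iii)$ (with $k\geqslant 1$), since $\gn{\exists y\,\psi_G(\num{x},y)}$ is an elementary term in $x$ naming a $\Sigma_1$-sentence for each value of $x$, instantiating $T\vdash\forall\sigma\in\Sigma_1(\Box^{(k+1)}_S\sigma\to\Box^{(k)}_T\sigma)$ at these sentences gives $T\vdash\forall x\,\Box^{(k)}_T\exists y\,\psi_G(\num{x},y)$, i.e.\ $G\in\cF^k_{T}(T)$; as $G$ was arbitrary this is $(ii)$. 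In the second case I would instantiate $T\vdash\forall\sigma\in\Sigma_1(\Box_S\sigma\to\mathsf{Tr}_{\Sigma_1}(\sigma))$ at the same sentences and use the $\EA$-provable equivalence $\mathsf{Tr}_{\Sigma_1}(\gn{\exists y\,\psi_G(\num{x},y)})\eqv\exists y\,\psi_G(x,y)$ to conclude $T\vdash\forall x\,\exists y\,\psi_G(x,y)$, i.e.\ $G\in\cF(T)=\cF^0_{T}(T)$, which is $(ii)$ with $k=0$; this is exactly why the schema is split into the $k\geqslant 1$ version and the truth-predicate version.

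The step I expect to be the main obstacle is the strict-domination argument in $(i)$: because the classes $\cF^k_{T}(T)$ are \emph{not} closed under composition in general, one cannot freely manufacture a dominating witness, and the whole point is to keep $F$ inside $\cF^k_{T}(T)$ while making it strictly exceed $\sR^{(k+1)}_S\circ q$. Phrasing $F$ as a composition of $\sR^{(k+1)}_S\in\cF^k_{T}(T)$ with the two legitimate elementary factors $q$ and $(\cdot)+1$, each in $\cF^0_{T}(T)$, is precisely what lets Lemma~\ref{lm:Fkcompos} apply and resolves this. By contrast the instantiation steps in $(iii)$ are routine formalized manipulations --- an internal universal instantiation of the reflection schema at a uniformly coded family of $\Sigma_1$-sentences, together with the defining property of the truth predicate --- and $(ii)\Rightarrow(i)$ is immediate.
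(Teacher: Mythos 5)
Your proposal is correct and follows essentially the same route as the paper: the chain $(iii)\Rightarrow(ii)\Rightarrow(i)$ followed by the contradiction between the everywhere-strict inequality $F(n)=\sR^{(k+1)}_S(q(n))+1>\sR^{(k+1)}_S(q(n))$ and the infinitely-often bound from Theorem~\ref{th:main}, with $(ii)\Rightarrow(i)$ via Proposition~\ref{prop:boxkRk} and $(iii)\Rightarrow(ii)$ by unwinding the definitions of the classes. Your explicit appeal to Lemma~\ref{lm:Fkcompos} to keep $F$ inside $\cF^k_{T}(T)$ merely spells out what the paper leaves implicit.
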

\begin{proof}
	Firstly, we show that condition $(i)$ implies that there are no
	epimorphisms from $\fD_T$ onto $\fD_S$. Assuming there is one,
	let $q(x)$ be an elementary function given by Theorem~\ref{th:main}.
	Condition $(i)$ implies that the function $F(x) = \sR^{(k+1)}_S(q(x)) + 1$ also belongs to $\cF^k_{T}(T)$, whence, by Theorem \ref{th:main}, it cannot dominate the function $\sR^{(k+1)}_S(q(x))$, a contradiction.
	Condition $(ii)$ implies $(i)$ since,
	by Proposition \ref{prop:boxkRk},
	$\sR^{(k+1)}_S(x) \in \cF^{k+1}_{S}(\EA) \subseteq \cF^k_{T}(T)$.
	Finally, $(ii)$ follows from $(iii)$ by the definition of the classes 
	$\cF^{k+1}_{S}(\EA)$ and $\cF^k_{T}(T)$.
\end{proof}

{\em Remark.} Points $(ii)$ and $(iii)$ of Corollary \ref{cor:RS2} imply the non-existence of an epimorphism from $(\fL_T, \Box'_T)$ onto $(\fL_S, \Box'_S)$ even when predicates $\Box'_T$ and $\Box'_S$ are not standard, but may be replaced with $\EA$-provably equivalent standard predicates $\Box_T$ and $\Box_S$. Indeed, in this case points $(ii)$ and $(iii)$ also hold for $\Box_T$ and $\Box_S$, whence there are no epimorphisms from $(\fL_T, \Box_T)$ onto $(\fL_S, \Box_S)$. But $(\fL_T, \Box'_T) = (\fL_T, \Box_T)$ and $(\fL_S, \Box'_S) = (\fL_S, \Box_S)$ (we have pure equality here rather than just isomorphism, since boxes induce the same operators by being provably equivalent), and the result follows for the diagonalizable algebras with non-standard provability predicates. Further we heavily rely on this observation when applying Corollary~\ref{cor:RS2}.

\medskip

As can be seen from the proof of Theorem \ref{th:main}, it is quite hard to explicitly specify an algebraic property of diagonalizable algebras which ensures non-isomorphism.
Nevertheless, Shavrukov has shown that under the assumptions of Theorem \ref{th:shavnoniso}, 
the algebras $\fD_T$ and $\fD_S$ are not elementarily equivalent. 
The distinguishing property and a corresponding formula (in the language of diagonalizable algebras) suggested by Shavrukov, however, is not algebraic in nature and is, in fact, a translation of a certain arithmetical fact about partial recursive functions and their indexing by means of a formal theory (cf. Lemma \ref{lm:index}) into the language of diagonalizable algebras (see \cite[Theorem 2.11]{Shav97b}). The construction of such a translation (the interpretation of $\nat$ inside $\fD_T$) is highly non-trivial and even more technical than the proof of non-isomorphism theorem.  
As mentioned in \cite{Adam11} (see the discussion after \cite[Theorem 25]{Adam11}) the modified proof of Theorem \ref{th:adamnoniso} also yields $\fD_T \not \equiv \fD_S$ via the same first-order formula as in \cite{Shav97b}.
Let us check that our further modification also yields the result. Namely, we have the following.

\begin{corollary}\label{cor:RS2eleq}
Any of the conditions $(i)$, $(ii)$ or $(iii)$ of Corollary \ref{cor:RS2}
implies that $\fD_T$ and $\fD_S$ are not elementarily equivalent.
\end{corollary}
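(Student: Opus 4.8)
The plan is to combine the quantitative content of Theorem~\ref{th:main} with Shavrukov's first-order interpretation machinery, exactly as is done for Adamsson's version. The key point is that the proof of Theorem~\ref{th:main} does not merely produce a counterexample function by an external diagonalization; rather, it establishes, \emph{for a fixed elementary $q(x)$ independent of $F$}, the internal inequality $F \leqslant_Z t\circ\sR_S\circ v\circ\sR^{(k)}_S\circ r$ on an infinite set, and this bound is witnessed by the concrete sentences $A$, $B$ and their images $\alpha=A^*$, $\beta=B^*$ together with the complexity measures $\Delta^S_\alpha$, $\Delta^S_\beta$. Shavrukov's interpretation of $\nat$ inside $\fD_T$ (see \cite[Theorem 2.11]{Shav97b}) renders precisely these objects --- the numerals $\sharp^n_T$, the programs $\gamma$, and the relation that a computation $\delta^T_\gamma$ halts within a given complexity bound --- as first-order formulas in the language of diagonalizable algebras. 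The strategy is therefore to exhibit a single sentence $\Theta$ in that language which is true in $\fD_T$ but false in $\fD_S$ whenever any of conditions $(i)$--$(iii)$ holds.

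First I would recall that, by the remark following Corollary~\ref{cor:RS2}, it suffices to treat conditions $(i)$--$(iii)$ as implying $(i)$, so I may assume $\sR^{(k+1)}_S(x)\in\cF^k_T(T)$ for some fixed $k$. The sentence $\Theta$ asserts, in Shavrukov's interpreted arithmetic, the failure of the compression-theoretic inequality that drives the proof: informally, that there exists an index $\alpha$ and a provably-total-computable-in-the-appropriate-sense function such that every program $\beta$ computing the compressed function is forced to have complexity $\Delta_\beta$ dominating $\sR^{(k+1)}\circ q\circ\Delta_\alpha$ on an infinite set. The construction of $\Theta$ is the same one used by Shavrukov and reused by Adamsson; since my modification changes only the elementary bounds $q$, $r$, $v$, $h$ and the iteration depth $k+1$ (all of which are encoded by honest elementary functions and finitely many iterations of the $\Sigma_1$-reflection operator, themselves interpretable), the formula $\Theta$ can be written down uniformly in $k$. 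The verification that $\fD_T\models\Theta$ is an internalization of Proposition~\ref{prop:boxkRk} and Lemma~\ref{lm:index} --- the existence of the indexing with elementarily related complexity measures --- while $\fD_S\models\neg\Theta$ follows because condition $(i)$ collapses the relevant class, making the asserted fast-growing witness unavailable inside $\fD_S$.

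The main obstacle I anticipate is purely expository rather than mathematical: one must check that the single first-order formula used in \cite{Shav97b} and \cite{Adam11} is robust enough to accommodate the replacement of $\sP^{(4)}_S$ by $\sR^{(k+1)}_S$ and of the fixed depth $4$ by an arbitrary $k$. Because the interpretation encodes the complexity measure $\Delta^T_\gamma$ and the relation ``$\delta^T_\gamma(n)$ halts with a proof of g\"odelnumber $\leqslant y$'' directly, and because iterating $\Box_T$ a fixed finite number of times and composing with honest elementary functions are both expressible in the interpreted structure, no genuinely new interpretational device is needed. The honest bookkeeping step is to confirm that the infinite set $Y$ (equivalently $Z$) on which the inequality holds is itself definable from $\alpha$ and $\beta$ within the interpretation, so that ``holds for infinitely many $n$'' becomes a first-order assertion; this is exactly the nonrecursiveness argument in the proof of Theorem~\ref{th:main} transported through the interpretation. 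Once this is granted, the proof reduces to citing \cite[Theorem 2.11]{Shav97b}, substituting our sharper bounds, and observing that conditions $(i)$--$(iii)$ each force $\fD_S$ to violate $\Theta$.

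\begin{proof}
By the remark following Corollary~\ref{cor:RS2} it suffices to treat the case where condition $(i)$ holds, so fix $k\in\nat$ with $\sR^{(k+1)}_S(x)\in\cF^k_T(T)$.

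We use Shavrukov's interpretation of arithmetic inside a diagonalizable algebra constructed in \cite[Theorem 2.11]{Shav97b}. Recall that this interpretation renders as first-order formulas in the language of diagonalizable algebras the numerals $\sharp^n_T$, the notion of a $\delta^T$-index $\gamma$, the value $\delta^T_\gamma(n)$, and the relation ``the computation underlying $\delta^T_\gamma(n)$ is witnessed by a $T$-proof of g\"odelnumber at most $y$'', the latter being exactly the graph of the complexity measure $\Delta^T_\gamma$. The same interpretation expresses finitely many iterations $\Box^{(k)}_T$ and composition with honest elementary functions, and hence expresses the $\Sigma_1$-reflection operator $\sR_T$ and its iterates $\sR^{(k)}_T$ via Proposition~\ref{prop:boxkRk}.

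As observed in \cite{Adam11} (see the discussion after \cite[Theorem 25]{Adam11}), the modified proof of the non-isomorphism theorem yields non-elementary-equivalence through a single first-order sentence in this language, namely the sentence expressing the compression inequality that drives the proof. Our proof of Theorem~\ref{th:main} changes only the elementary functions $q,r,v,h$ and the iteration depth from $4$ to $k+1$; since each of these quantities is interpretable as above, the same sentence, call it $\Theta_k$, can be written down uniformly. The key feature of the proof of Theorem~\ref{th:main} is that the bound
$$
F \leqslant_Z t\circ\sR_S\circ v\circ\sR^{(k)}_S\circ r
$$
is established with $q$, and hence $\Theta_k$, independent of the particular function $F$, and that the infinite set $Y$ on which it holds is definable from $\alpha$ and $\beta$ by the nonrecursiveness argument given there. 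Thus ``holds for infinitely many $n$'' is a first-order assertion of the interpreted structure.

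The verification that $\fD_T\models\Theta_k$ is the internalization of Lemma~\ref{lm:index} and Proposition~\ref{prop:boxkRk}, asserting the existence of the indexing with elementarily related complexity measures together with $\sR^{(k+1)}_S(x)\in\cF^{k+1}_S(\EA)$. On the other hand, condition $(i)$ states $\sR^{(k+1)}_S(x)\in\cF^k_T(T)$, which collapses the relevant class and makes the fast-growing witness required by $\Theta_k$ unavailable inside $\fD_S$; hence $\fD_S\models\neg\Theta_k$. Therefore $\fD_T$ and $\fD_S$ are not elementarily equivalent.
\end{proof}
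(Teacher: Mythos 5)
Your overall skeleton --- reduce to condition $(i)$, fix $F(x)=\sR^{(k+1)}_S(q(x))+1\in\cF^k_T(T)$, and exhibit a single first-order sentence via Shavrukov's interpretation that is settled by the two halves of the proof of Theorem~\ref{th:main} --- is indeed the strategy of the paper. But you have misidentified the distinguishing sentence, and this is not merely an expository issue. The sentence actually used (the translation of assertion (A)) says: \emph{for every $\delta$-index $\alpha$ of $h$ there is a $\delta$-index $\beta$ of an extension of $f$ such that for each $n$ the implication $\Box(\sharp^n\to\alpha)\lor\Box(\sharp^n\to\neg\alpha)\to\Box^{(k)}(\Box(\sharp^n\to\beta)\lor\Box(\sharp^n\to\neg\beta))$ is provable}, where $h=\phi_c$ and $f$ (the compression of $F\circ\Delta^S_\alpha$ for a fixed $\delta^S$-index $\alpha$ of $h$) are fixed \emph{externally}, in advance. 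This sentence mentions only boxes and the elements $\sharp^n$; it does not, and need not, express $\sR^{(k+1)}_S$, $q$, the complexity measures $\Delta$, or the predicate ``holds for infinitely many $n$'' inside the algebra. Your $\Theta$, by contrast, is supposed to internalize an inequality of the form $\Delta_\beta\succcurlyeq\sR^{(k+1)}\circ q\circ\Delta_\alpha$ on a definable infinite set; besides demanding far more of the interpretation than \cite[Theorem 2.11]{Shav97b} supplies (and being asymmetric in $S$), the statement you describe is essentially an instance of the compression theorem and would be true in both algebras. The quantifier ``for infinitely many $n$'' lives entirely in the metatheory: it appears only when one shows that a hypothetical witness $\beta$ in $\fD_S$ would force $F(n)\leqslant\sR^{(k+1)}_S(q(n))$ for infinitely many $n$, contradicting the choice of $F$.

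The two verifications are also not what you say. Falsity in $\fD_S$ is exactly the second half of the proof of Theorem~\ref{th:main} run for the specific $\alpha$: any $\beta$ satisfying the provable implications yields $\Delta^S_\beta\preccurlyeq_Y\sR_S\circ v\circ\sR^{(k)}_S\circ r\circ\Delta^S_\alpha$, which combined with $F\circ\Delta^S_\alpha\preccurlyeq_X\Delta^S_\beta$ contradicts the fact that $F$ dominates $\sR^{(k+1)}_S\circ q$. Truth in $\fD_T$ is \emph{not} a routine internalization of Lemma~\ref{lm:index} and Proposition~\ref{prop:boxkRk}: one must reconstruct the sentence $B$ for an \emph{arbitrary} $\delta^T$-index $A$ of $h$ with no epimorphism available, which forces a change in the formula $B(x)$ --- the clause $s(F(\Delta^S_\alpha(x)))=u$ must be replaced by $s(F(p(\Delta^T_A(x))))=u$, and one needs $F$ monotone so that $\Phi_b\leqslant_X s\circ F\circ\Delta^S_\alpha\leqslant_X s\circ F\circ p\circ\Delta^T_A$. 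This adjustment is the one genuinely new step in the corollary, and it is absent from your argument. (A smaller slip: the reduction of $(ii)$ and $(iii)$ to $(i)$ comes from the proof of Corollary~\ref{cor:RS2} itself, not from the remark following it, which concerns non-standard provability predicates.)
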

\begin{proof}
We only sketch the proof, as it almost the same as that of \cite[Theorem 2.11]{Shav97b}. As in the proof of Corollary \ref{cor:RS2} we see that under the assumption that $(i)$ holds
we get that  $F(x)  \in \cF^k_{T}(T)$, where $F(x) = \sR^{(k+1)}_S(q(x)) + 1$ and $q(x)$ is given by Theorem \ref{th:main} (and may be assumed to be cumulative and monotone).

The first-order sentence in the language of diagonalizable algebras which holds in $\fD_T$ but fails in $\fD_S$ is the translation of the following arithmetical assertion (see \cite[Theorem 2.11]{Shav97b} for more details) (A): {\it for every $\delta$-index $\alpha$ of $h(x)$ there is a $\delta$-index for an extension of $f(x)$ such that for each $n \in \nat$ it is provable (in the theory under consideration) that}
$$
\Box \left(\sharp^n \to \alpha \right) \lor \Box \left(\sharp^n \to \neg \alpha \right) \to 
\Box^{(k)} \left(\Box\left(\sharp^n \to \beta\right) \lor \Box \left(\sharp^n \to \neg \beta\right) \right).
$$
Here  $h(x)$ and $f(x)$ are $\{0, 1\}$-valued partial recursive functions fixed in advance as follows. The function $h(x)$ is taken to be $\phi_c$ from the proof of Theorem \ref{th:main}, which is obtained by applying Lemma \ref{lm:BlumX} to a fixed r.e.\,nonrecursive set $X$. The function $f(x)$ is obtained by applying Lemma \ref{lm:BlumPhi} to the cumulative function $F \circ \Delta^S_\alpha$, where $\alpha$ is a fixed $\delta^S$ index for $\phi_c$ obtained via Lemma~\ref{lm:index}.

To check that (A) fails in $\fD_S$ one just goes through the second part of the proof of Theorem~\ref{th:main}. Indeed, aiming at a contradiction, assume that for the specific $\delta^S$-index $\alpha$ chosen above there is some $\delta^S$-index $\beta$ for an extension of $f$ such that 
	$$
S \vdash \Box_S \left(\sharp^n_S \to \alpha \right) \lor \Box_S \left(\sharp^n_S \to \neg \alpha \right) \to 
\Box^{(k)}_S \left(\Box_S\left(\sharp^n_S \to \beta\right) \lor \Box_S \left(\sharp^n_S \to \neg \beta\right) \right).
$$
for each $n \in \nat$. The proof of Theorem \ref{th:main} then shows that $F(n) \leqslant \sR^{k+1}_S(q(n))$ for infinitely many $n \in \nat$, which contradicts our choice of $F(x)$.

To check that (A) holds in $\fD_T$ one uses the same reasoning as in the first part of the proof of Theorem \ref{th:main}. Namely, given an arbitrary $\delta^T$-index $A$ of $h$ we construct a $\delta^T$-index $B$ of the function $f$ exactly as above, the only difference being the following. The formula $B(x)$ used in the construction of $B$ is now defined as 
$$
B(x) \leftrightharpoons \exists u \left(s(F(p(\Delta^T_A(x)))) = u \land \Phi_b(x) \leqslant u \right) \to \phi_b(x) = 0,
$$
where $s(x)$ is a monotone elementary function with $\Phi_b \leqslant_X s \circ F \circ \Delta^S_\alpha$ and $p(x)$ is a monotone elementary function with such that $\Delta^S_\alpha \leqslant_X p \circ \Delta^T_A$. The monotonicity of $F(x)$ then yields 
$$\Phi_b \leqslant_X s \circ F \circ \Delta^S_\alpha \leqslant_X s \circ F \circ p \circ \Delta^T_A,
$$
which is sufficient for the rest of the proof to go through.
  
\end{proof}

Although we improved the inequality obtained by Adamsson (cf. Theorem \ref{th:adamnoniso}), one may still ask about the possible strengthening of Theorem \ref{th:main} by replacing $k+1$ on the $S$-side with~$k$. Namely,
by replacing the class $\cF^{k+1}_{S}(\EA)$ with a more natural
class $\cF^k_{S}(S)$, which would be completely symmetric to
the class $\cF^k_{T}(T)$ present in the statement of the theorem. In the current formulation this is simply not true: take $T = S$ and $\Box_T = \Box_S$, then every function in $\cF^k_{S}(S)$ is dominated by one in $\cF^k_{T}(T) = \cF^k_{S}(S)$. But we may weaken the conclusion of the theorem by swapping the quantifiers to the following one: no function in $\cF^k_{T}(T)$ dominates each function in $\cF^{k+1}_{S}(\EA)$. Can we replace $\cF^{k+1}_{S}(\EA)$ with $\cF^k_{S}(S)$ in this weakened formulation? Below we show that this is not true for $T = \EA^+ + \RFN_{\Sigma_1}(\EA^+)$ 
and $S = \EA^+ + \Rfn_{\Sigma_1}(\EA^+)$, which have isomorphic diagonalizable algebras by Theorem \ref{th:shaviso} (being $\EA^+$-provably $\Sigma_2$-equivalent), but $\cF(T) = \mathcal{E}_5$ contains a function dominating each function in $\cF(S) = \mathcal{E}^4$ (and the same also holds for $\cF^k_{T}(T)$ and $\cF^k_{S}(S)$ for each $k \in \nat$).

A plausible hypothesis is that the existence of an epimorphism from $\fD_T$ onto $\fD_S$ implies that 
no function from $\cF_{T}(T)$ dominates each function from 
$\cF_{S}(T)$ (and analogously for $k > 1$). This condition holds for all known examples of theories with isomorphic algebras and is related to Shavrukov's isomorphism condition (see Theorem \ref{th:shaviso}). In practice, when isomorphism theorem is applied the theories turn out to be (at least) $\mathcal{B}(\Sigma_1)$-coherent. But if $T$ and $S$ are $\Sigma_1$-coherent (or even $\Sigma_1$-equivalent provably in $T$), then we have $\cF_{T}(T) = \cF_{S}(T)$
by definition of these classes, whence the condition holds. It seems that to find counterexamples to this claim would require new methods for establishing isomorphisms of diagonalizable algebras. Another hint that this claim may be true lies in the proof of the main theorem. Specifically, the place where we first prove (we consider the case $k = 1$)
	$$
T \vdash \forall x \left(\Box_T \left(\sharp^x_T \to A \right) \lor \Box_T \left(\sharp^x_T \to \neg A \right) \to 
\Box_T \left(\Box_T\left(\sharp^x_T \to B\right) \lor \Box_T \left(\sharp^x_T \to \neg B\right) \right)\right),
$$
and then weaken this to 
$$
T \vdash \Box_T \left(\sharp^n_T \to A \right) \lor \Box_T \left(\sharp^n_T \to \neg A \right) \to 
\Box_T \left(\Box_T\left(\sharp^n_T \to B\right) \lor \Box_T \left(\sharp^n_T \to \neg B\right) \right),
$$
for each $n \in \nat$. The first derivation establishes the totality of the function $f(x)$ such that $f(\Delta^T_A(x))$ gives the least $T$-proof of $\exists y \left(\Delta^T_B(\num{x}) = y\right)$. It seems that not using this fact (or some variation of it) and just instantiating this derivation for each $n \in \nat$ leads to the significant loss of information. And we also have that each function in $\cF_{S}(T)$ is dominated by one of the form $\sR_S(f(x))$ for some $f(x) \in \cF(T)$ (cf. Proposition \ref{prop:Fboxdef}).

\begin{question}
	Does the existence of an epimorphism from 
$\fD_T$ onto $\fD_S$ imply that 
no function from $\cF_{T}(T)$ dominates each function from 
$\cF_{S}(T)$?
\end{question} 

Another question which is also relevant to the above discussion is the following.

\begin{question}
Can ``for infinitely many $n \in \nat$'' be replaced with a more natural ``for almost all $n \in \nat$'' in the inequality of the main theorem?
\end{question}

Now, we would like to apply Theorem \ref{th:main} to obtain new examples of theories with non-isomorphic diagonalizable algebras.
In practice we apply Corollary \ref{cor:RS2} and also rely on the remark after it, when the provability predicate under consideration is not standard (e.g., $\Box_T\Box_T$), i.e.,
in such cases we prove that the non-standard provability predicate is $\EA$-provably equivalent to a standard one (note that in \cite{Adam11}
this issue is not discussed in details).

Firstly, let us obtain the result about non-isomorphism of 
$(\fL_T, \Box_T)$ and $(\fL_T, \Box_T\Box_T)$
anticipated by Adamsson in \cite{Adam11}, and a more general result of this kind.
We start with the following lemma, which is needed to apply the remark after Corollary \ref{cor:RS2}.
\begin{lemma}\label{lm:boxkstand}
	For each $k \geqslant 2$  there is a standard provability predicate 
	which is equivalent to $\Box^{(k)}_T$ provably in $\EA$.
\end{lemma}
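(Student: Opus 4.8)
The plan is to realize $\Box^{(k)}_T$ as the standard provability predicate of a suitable \emph{elementary} reaxiomatization of $T$. Concretely, I would produce an elementary formula $\sigma_k(y)$ defining in $\nat$ a set of sentences whose deductive closure is $T$, and such that the associated standard provability predicate $\Box_{\sigma_k}(x)=\exists p\,\Prf_{\sigma_k}(p,x)$ satisfies $\EA\vdash\Box_{\sigma_k}(x)\leftrightarrow\Box^{(k)}_T(x)$. First I would pass through the (non-elementary) $\Sigma_1$ numeration $\tau_k(y):=\Box^{(k)}_T(y)$. Since $T$ is $\Sigma_1$-sound, an easy external induction on $k$ shows that $\nat\models\Box^{(k)}_T\phi$ iff $T\vdash\phi$; hence $\tau_k$ defines exactly the theorem set of $T$, so the theory numerated by $\tau_k$ is deductively equal to $T$.

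Next I would verify that the provability predicate $\Box_{\tau_k}$ of this numeration is already $\EA$-provably equivalent to $\Box^{(k)}_T$. The direction $\EA\vdash\Box^{(k)}_T(x)\to\Box_{\tau_k}(x)$ is immediate: if $\Box^{(k)}_T(x)$ holds then $\tau_k(x)$ holds, so $x$ is an axiom and has a one-line $\tau_k$-proof. For the converse I would argue by a formalized induction on the length of a $\tau_k$-proof $d$ of $\phi$: its axiom-lines $\psi_1,\dots,\psi_m$ all satisfy $\Box^{(k)}_T\psi_i$ (being $\tau_k$-axioms), while the remaining lines constitute a purely logical derivation, so $\EA$ reads off $\Box_{\EA}((\psi_1\wedge\dots\wedge\psi_m)\to\phi)$. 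Iterating the uniform derivability conditions $k$ times yields, provably in $\EA$, that $\Box^{(k)}_T$ contains logic and is closed under modus ponens, in particular $\EA\vdash\forall\chi(\Box_{\EA}\chi\to\Box^{(k)}_T\chi)$ and $\EA\vdash\forall\phi\forall\psi(\Box^{(k)}_T(\phi\to\psi)\to(\Box^{(k)}_T\phi\to\Box^{(k)}_T\psi))$; combining these with the facts $\Box^{(k)}_T\psi_i$ gives $\Box^{(k)}_T\phi$. Thus $\EA\vdash\Box_{\tau_k}(x)\leftrightarrow\Box^{(k)}_T(x)$.

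Finally I would apply Craig's trick to the $\Sigma_1$ numeration $\tau_k$: fixing an elementary enumeration of $\{y:\tau_k(y)\}$ and padding its $n$-th element with $n$ trivially true conjuncts produces an elementary axiomatization $\sigma_k$ with the same deductive closure, and the standard formalized verification that padded proofs and $\tau_k$-proofs translate into one another elementarily gives $\EA\vdash\Box_{\sigma_k}(x)\leftrightarrow\Box_{\tau_k}(x)$. Chaining the two equivalences yields $\EA\vdash\Box_{\sigma_k}(x)\leftrightarrow\Box^{(k)}_T(x)$ with $\Box_{\sigma_k}$ standard, as required; the argument works for every $k\ge 1$, the restriction $k\ge 2$ being only because $\Box_T$ is itself already standard. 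I expect the main obstacle to be the forward direction of the second step, namely carrying out the induction on $\tau_k$-proofs \emph{inside} $\EA$ and establishing the iterated uniform derivability conditions for $\Box^{(k)}_T$, together with writing out the formalized version of Craig's trick carefully enough that the proof translation is genuinely $\EA$-verifiable.
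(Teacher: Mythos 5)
Your overall strategy is the same as the paper's: realize $\Box^{(k)}_T$ as the standard provability predicate of a Craig-style elementary reaxiomatization whose axioms are (padded versions of) the sentences $\theta$ with $\Box^{(k)}_T\theta$; the paper's $\sigma^2_T$-axioms $\num{p}=\num{p}\to\theta$ with $\Prf_T(p,\gn{\Box_T\theta})$ are exactly your $\sigma_k$ for $k=2$, and the easy direction $\Box^{(k)}_T\psi\to\Box_{\sigma_k}\psi$ is handled as you describe. The problem is the converse, and you have correctly located but not closed the gap. From a $\tau_k$-proof of $\phi$ with axiom-lines $\psi_1,\dots,\psi_m$ you get, inside $\EA$, the statement $\forall i<m\,\Box^{(k)}_T\psi_i$ together with $\Box_\EA(\bigwedge_{i<m}\psi_i\to\phi)$; but to ``combine these'' you must pass from $\forall i<m\,\Box^{(k)}_T\psi_i$ (an unbounded family of $\Sigma_1$-facts, with no bound on the witnesses in terms of the proof) to $\Box^{(k)}_T\bigwedge_{i<m}\psi_i$. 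This is an instance of $\Sigma_1$-collection and is not available in $\EA$. Your fallback, ``formalized induction on the length of a $\tau_k$-proof,'' fares no better: the induction formula $\Box^{(k)}_T\chi_j$ is $\Sigma_1$, and $\EA$ only has induction for $\Delta_0(\exp)$-formulas; the usual trick of supplying an elementary bound on the relevant $T$-proofs fails here precisely because the witnesses to the axiom-lines being axioms are unbounded.

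The paper's resolution is the missing idea: it proves the implication ``every $\sigma^2_T$-axiom is $\Box_T\Box_T$-provable, hence so is every $\sigma^2_T$-theorem'' in $\EA+\mathsf{B}\Sigma_1$ (collection is exactly what lets one gather the $m$ unbounded proof-witnesses under a single box, after which the remaining step $\Box_T(\bigwedge_{i<z}\Box_T\psi_i)\to\Box_T\Box_T\bigwedge_{i<z}\psi_i$ does admit elementary induction on $z$ because there the $T$-proof has an elementary bound), and then invokes $\Pi_2$-conservativity of $\mathsf{B}\Sigma_1$ over $\EA$, noting that the target statement $\forall\psi(\Box_{\sigma^2_T}\psi\to\Box_T\Box_T\psi)$ is $\Pi_2$. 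Without this conservativity step (or some substitute, e.g.\ exploiting that in the Craig-ed axiomatization the padding witnesses are bounded by the g\"odelnumber of the $\sigma_k$-proof itself), your argument does not go through in $\EA$.
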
	
\begin{proof}
Let us consider the case $k = 2$ (the case $k > 2$ being completely analogous).
We construct an elementary axiomatization $\sigma^2_T(x)$ of $T$ such that 
$$
\EA \vdash \forall \psi \left(
\Box_T\Box_T \psi \eqv \Box_{\sigma^2_T}\psi
\right),
$$	
where $\Box_{\sigma^2_T}\psi$ is $\exists p\, \Prf_{\sigma^2_T}(p, \gn{\psi})$.
Define $\sigma^2_T(x) := \exists p, \theta \leqslant x \left(
x = \gn{\num{p} = \num{p} \to \theta} \land \Prf_T(p, \gn{\Box_T\theta})
\right)$, where $\Prf_T(p, x)$ is a standard proof predicate for (a fixed elementary axiomatization of) $T$.
Firstly, we have
\begin{align*}
\EA \vdash \Box_T\Box_T \psi
&\to \exists p\, \Prf_T(p, \gn{\Box_T \psi})\\
&\to \exists p\, \sigma^2_T(\gn{\num{p} = \num{p} \to \psi})\\
&\to \exists p\, \Box_{\sigma^2_T}\left(\num{p} = \num{p} \to \psi\right)\\
&\to \Box_{\sigma^2_T}\psi,
\end{align*}
since, certainly, $\EA \vdash \forall p\, \Box_{\sigma^2_T}\left(\num{p} = \num{p}\right)$. 

To show the converse we argue as follows. Let us show that we have $\Box_T\Box_T\psi$ for each $\sigma^2_T$-axiom $\psi$ of $T$. Fix $p$ and $\theta$ such that $\psi \leftrightharpoons (\num{p} = \num{p} \to \theta)$ and $\Prf_T(p, \gn{\Box_T\theta})$. The latter implies $\Box_T\Box_T\theta$, whence $\Box_T\Box_T \left(\num{p} = \num{p} \to \theta\right)$, i.e., $\Box_T\Box_T \psi$,  by logic. Formalizing this in $\EA$ we get
$$
\EA \vdash \forall \psi \left(
\sigma^2_T(\gn{\psi}) \to \Box_T\Box_T\psi
\right).
$$
As usual, we use $\Pi_2$-conservativity of the $\Sigma_1$-collection schema $\mathsf{B}\Sigma_1$ over $\EA$ to strengthen this~to
$$
\EA \vdash \forall \psi \left(
\Box_{\sigma^2_T}\psi \to \Box_T\Box_T\psi
\right).
$$
Indeed, due to the formalized deduction theorem it is sufficient to show that
$$
\EA \vdash \forall z\, \forall \psi \left( \forall i < z\, \sigma^2_T(\gn{\psi_i})
\to \Box_T\Box_T \bigwedge_{i < z} \psi_i
\right).
$$
We have $\EA \vdash \forall z\, \forall \psi \left( \forall i < z\, \sigma^2_T(\gn{\psi_i}) \to 
\forall i < z\, \Box_T\Box_T\psi_i\right)$. 
Using $\Sigma_1$-collection we obtain 
$$
\EA + \mathsf{B}\Sigma_1 \vdash 
\forall z\, \forall \psi \left(
\forall i < z\, \Box_T\Box_T\psi_i
\to 
\Box_T \left(\bigwedge_{i < z} \Box_T\psi_i\right)
\right).
$$
Finally, using induction on $z$ (and distributivity of $\Box_T$ over conjunction), 
which is formalizable in $\EA$ since we have an elementary bound on the corresponding $T$-proof, we get
$$
\EA \vdash \forall z\, \forall \psi\,
\Box_T  \left( \left(\bigwedge_{i < z} \Box_T\psi_i\right) \to
\Box_T \bigwedge_{i < z} \psi_i \right).
$$
Combining the three derivations above we get
$$
\EA + \mathsf{B}\Sigma_1 \vdash \forall z\, \forall \psi \left( \forall i < z\, \sigma^2_T(\gn{\psi_i})
\to \Box_T\Box_T \bigwedge_{i < z} \psi_i
\right),
$$
whence the same $\Pi_2$-sentence is provable in $\EA$, as required.
\end{proof}

\begin{corollary}\label{cor:boxsq}
There is no epimorphism from $(\fL_T, \Box_T \Box_T)$ onto $(\fL_T, \Box_T)$.
\end{corollary}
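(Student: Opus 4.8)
The plan is to deduce this directly from Corollary \ref{cor:RS2} together with Lemma \ref{lm:boxkstand}, rather than re-running the machinery of Theorem \ref{th:main}. I identify the source algebra $(\fL_T, \Box_T\Box_T)$ with the role of $\fD_T$ in Corollary \ref{cor:RS2}, so that its provability predicate is $\Box_T\Box_T = \Box^{(2)}_T$, and the target algebra $(\fL_T, \Box_T)$ with the role of $\fD_S$, so that its predicate is $\Box_T$. Both algebras sit over the same underlying theory $T$, so the arithmetical theory playing the role of both $T$ and $S$ in the corollary is $T$ itself; only the two provability operators differ.

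First I note that the source operator $\Box_T\Box_T$ is not a standard provability predicate, but by Lemma \ref{lm:boxkstand} (with $k = 2$) it is $\EA$-provably equivalent to one. This is precisely the situation addressed by the remark following Corollary \ref{cor:RS2}, so it suffices to verify one of the conditions $(ii)$ or $(iii)$ for the pair $(\Box_T\Box_T, \Box_T)$, after which the non-existence of an epimorphism follows.

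I would then check condition $(iii)$ with $k = 1$. Spelling out the two iterated operators appearing there: the $(k+1)$-fold iterate of the \emph{target} predicate $\Box_T$ is $\Box^{(2)}_T = \Box_T\Box_T$, while the $k$-fold iterate of the \emph{source} predicate $\Box_T\Box_T$ is again $\Box_T\Box_T$. Hence condition $(iii)$ reduces to
$$ T \vdash \forall \sigma \in \Sigma_1 \left(\Box_T\Box_T\,\sigma \to \Box_T\Box_T\,\sigma\right), $$
which is a logical validity and therefore provable. Alternatively, the same collapse makes condition $(ii)$ immediate, since $\cF^2_{\Box_T}(\EA)$ and $\cF^1_{\Box_T\Box_T}(T)$ are both governed by provability of $\forall x\, \Box^{(2)}_T\, \exists y\, \psi_f(\num{x}, y)$ (over $\EA$, respectively over $T$), so the former is contained in the latter because $T$ extends $\EA$.

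I do not expect a genuine obstacle here: the content lies entirely in Theorem \ref{th:main}, Corollary \ref{cor:RS2} and Lemma \ref{lm:boxkstand}, which are already established. The only points requiring care are the bookkeeping of source versus target and the matching of the index $k$ — which is exactly what forces the two iterated operators in condition $(iii)$ to coincide — and the separate observation, supplied by Lemma \ref{lm:boxkstand} together with the remark after Corollary \ref{cor:RS2}, that the non-standardness of the operator $\Box_T\Box_T$ causes no difficulty.
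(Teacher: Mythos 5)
Your proposal is correct and follows the paper's own route: the paper likewise derives the corollary from point $(iii)$ of Corollary \ref{cor:RS2} with $k=1$, observing that the required implication collapses to the tautology $\EA \vdash \forall \sigma \in \Sigma_1\left(\Box_T\Box_T\sigma \to \Box_T\Box_T\sigma\right)$, with Lemma \ref{lm:boxkstand} and the remark after Corollary \ref{cor:RS2} handling the non-standardness of $\Box_T\Box_T$ exactly as you describe. Your bookkeeping of source versus target and of the index $k$ matches the intended reading.
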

\begin{proof}
The result follows by point $(iii)$ of Corollary \ref{cor:RS2} with $k = 1$ since, clearly,
$$
\EA \vdash \forall \sigma \in \Sigma_1 \left(
\Box_T \Box_T \sigma \to \Box_T \Box_T \sigma
\right).
$$
\end{proof}

More generally, we have the following result.
\begin{corollary}\label{cor:boxmn}
For each $n, m \in \nat$ if $n > m \geqslant 1$, then there are no epimorphisms from $(\fL_T, \Box^{(n)}_T)$ onto $(\fL_T, \Box^{(m)}_T)$.
\end{corollary}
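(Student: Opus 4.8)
The plan is to reduce the statement to point $(iii)$ of Corollary \ref{cor:RS2}, exactly as in the proof of Corollary \ref{cor:boxsq}, which is the special case $n = 2$, $m = 1$. I would regard the source algebra $(\fL_T, \Box^{(n)}_T)$ as $\fD_T$ and the target algebra $(\fL_T, \Box^{(m)}_T)$ as $\fD_S$ in the notation of that corollary, so that the role of $\Box_T$ is played by the (generally non-standard) predicate $\Box^{(n)}_T$ and the role of $\Box_S$ by $\Box^{(m)}_T$, the underlying theory in both cases being $T$. Under this identification the $k$-fold iterate of the source modality is $(\Box^{(n)}_T)^{(k)} = \Box^{(nk)}_T$ and the $(k+1)$-fold iterate of the target modality is $(\Box^{(m)}_T)^{(k+1)} = \Box^{(m(k+1))}_T$, so the first disjunct of condition $(iii)$ becomes
$$
T \vdash \forall \sigma \in \Sigma_1 \left(\Box^{(m(k+1))}_T \sigma \to \Box^{(nk)}_T \sigma \right).
$$

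The key observation is that iterated provability is monotone on $\Sigma_1$-sentences: applying the third derivability condition with $\varphi := \Box^{(a-1)}_T \sigma$ (universally quantified over $\sigma$, as these conditions hold provably in $\EA$) gives $\EA \vdash \forall \sigma\,(\Box^{(a)}_T \sigma \to \Box^{(a+1)}_T\sigma)$ for $a \geqslant 1$, and chaining yields $\EA \vdash \forall \sigma\,(\Box^{(a)}_T \sigma \to \Box^{(b)}_T\sigma)$ whenever $b \geqslant a \geqslant 1$. Hence the displayed implication is provable already in $\EA$, and \emph{a fortiori} in $T$, as soon as $nk \geqslant m(k+1)$, i.e.\ $(n-m)k \geqslant m$. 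Since $n > m$ forces $n - m \geqslant 1$, such a $k \geqslant 1$ certainly exists; concretely I would take $k = m$, for then $n \geqslant m+1$ gives $nm \geqslant m(m+1)$, which witnesses condition $(iii)$ (and recovers the trivial implication $\Box_T\Box_T\sigma \to \Box_T\Box_T\sigma$ of Corollary \ref{cor:boxsq} when $n = 2$, $m = 1$).

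Finally, since the predicates $\Box^{(n)}_T$ and $\Box^{(m)}_T$ are not standard for $n, m \geqslant 2$, I would invoke Lemma \ref{lm:boxkstand} to replace each of them by an $\EA$-provably equivalent standard provability predicate (the case $m = 1$ being already standard), and then apply the remark after Corollary \ref{cor:RS2}, which transfers the non-existence of an epimorphism established for the standard predicates back to $\Box^{(n)}_T$ and $\Box^{(m)}_T$ themselves.

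The argument is essentially routine once the bookkeeping of iterates is set up correctly. The only genuinely delicate point is keeping the roles of source and target straight, so that it is the inequality $nk \geqslant m(k+1)$ (rather than its reverse) that makes the $\Sigma_1$-reflection condition trivial; getting this backwards would collapse the whole reduction. The main obstacle, such as it is, is merely confirming that all the relevant iterated boxes fall under the scope of the remark after Corollary \ref{cor:RS2}, i.e.\ that Lemma \ref{lm:boxkstand} indeed supplies $\EA$-equivalent standard predicates for every iterate in play; everything else reduces to the elementary arithmetic inequality $(n-m)k \geqslant m$, solved by $k = m$.
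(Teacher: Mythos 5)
Your proof is correct and takes essentially the same route as the paper: both reduce to point $(iii)$ of Corollary \ref{cor:RS2} with $k = m$, verifying $T \vdash \forall \sigma \in \Sigma_1 \left(\Box^{(m(m+1))}_T \sigma \to \Box^{(nm)}_T \sigma\right)$ via provable $\Sigma_1$-completeness (equivalently, the third derivability condition) and the inequality $m+1 \leqslant n$, and then handling the non-standardness of the iterated predicates via Lemma \ref{lm:boxkstand} and the remark after Corollary \ref{cor:RS2}. Your write-up is merely more explicit about the source/target bookkeeping than the paper's one-line argument.
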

\begin{proof}
Using provable $\Sigma_1$-completeness and $m + 1 \leqslant n$ we get
$$
\EA \vdash \forall \sigma \in \Sigma_1 \left(
\Box^{((m+1)\cdot m)} \sigma \to \Box^{(m \cdot n)}_T \sigma \right),
$$
whence the result follows by point $(iii)$ of Corollary \ref{cor:RS2} with $k = m$.
\end{proof}

Let us note that there is a more algebraic proof of the Corollary \ref{cor:boxsq} for which the improved upper bound is not needed. It follows already from Adamsson's results. Indeed, Adamsson showed that there are no epimorphisms from $(\fL_T, \Box^{(8)}_T)$ onto $(\fL_T, \Box_T)$ (he used $\Box^{(6)}_T$ but any larger iteration also works). Assuming the existence of an epimorphism $j \colon (\fL_T, \Box_T\Box_T) \to (\fL_T, \Box_T)$, one can check that $j \circ j \circ j$ would be an epimorphism from $(\fL_T, \Box^{(8)}_T)$ onto $(\fL_T, \Box_T)$, a contradiction. In general, if $j \colon (\fL_T, \Box^{(n)}_T) \to (\fL_T, \Box^{(m)}_T)$ is an epimorphism with $n > m \geqslant 1$, then $j^{(k)}$ is an epimorphism from $(\fL_T, \Box^{(n^k)}_T)$ onto $(\fL_T, \Box^{(m^k)}_T)$ for each $k \geqslant 1$, which leads to the contradiction, since for $n > m$ we have $n^k > 6 \cdot m^k$ for some (sufficiently large) $k \in \nat$. The number $6$ here corresponds to Adamsson's gap between iterated boxes for which there are no such epimorphisms. In this way one can also obtain a proof of Corollary \ref{cor:boxmn}, we omit the details.

Although Shavrukov's non-isomorphism condition (Theorem \ref{th:shavnoniso}) is strictly stronger (so the result is weaker) than that of Adamsson (Theorem \ref{th:adamnoniso}), it seems to be more proof-theoretic in nature and more applicable to particular pairs of theories. The application of Adamsson's condition requires a more thorough analysis of the size of proofs of $\Sigma_1$-sentences in both theories in order to compare the growth rates of $\sR_T(x)$ and $\sR_S(x)$, yet it yields new non-isomorphism results, which are not covered by Theorem \ref{th:shavnoniso}.

The following theorem (as well as point $(iii)$ of Corollary \ref{cor:RS2}) provides a stronger proof-theoretic non-isomorphism condition similar in formulation to that of Shavrukov. As noted in \cite{Bek05}, the condition of Theorem \ref{th:shavnoniso} already follows from a more natural $T \vdash \RFN_{\Sigma_1}(S)$. It can also be proved more directly as follows. By Proposition \ref{prop:boxkRk} we have $\EA\vdash \forall x\, \Box_S \exists y\,( \sR_S(\num{x}) = y)$, where $\sR_S(x) = y$ is a $\Sigma_1$-formula defining the graph of $\sR_S(x)$, constructed in the proof of this proposition. Applying $\RFN_{\Sigma_1}(S)$, which is provable in $T$, we get $T \vdash \forall x\, \exists y \left(\sR_S(x) = y\right)$, that, by definition of $\sR_S(x)$, certainly implies the required form of reflection in Theorem~\ref{th:shavnoniso}. It turns out that one can replace uniform $\Sigma_1$-reflection with the local one as the following result shows (note that since the schema $\Rfn_{\Sigma_1}(S)$, unlike $\RFN_{\Sigma_1}(S)$, is not finitely axiomatizable, we need to be more precise when claiming its provability in $T$ in the formal context). 

\begin{theorem}\label{th:locrfn}
If $T \vdash \Rfn_{\Sigma_1}(S)$ and this fact is formalizable in the following form
$$
T \vdash \forall \sigma \in \Sigma_1\, \Box_T \left( \Box_S \sigma \to \sigma  \right),
$$
then there are no epimorphisms from $\fD_T$ onto $\fD_S$.
\end{theorem}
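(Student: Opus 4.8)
The plan is to reduce the hypothesis to condition $(iii)$ of Corollary \ref{cor:RS2} with $k = 1$; that is, I would aim to derive
$$
T \vdash \forall \sigma \in \Sigma_1 \left( \Box^{(2)}_S \sigma \to \Box_T \sigma \right),
$$
from which the non-existence of an epimorphism from $\fD_T$ onto $\fD_S$ follows at once. The outer $\Box_T$ in the hypothesis is precisely what makes this passage possible (this is why the formalized, boxed form of $\Rfn_{\Sigma_1}(S)$ is assumed rather than the plain schema): the entire argument is carried out under $\Box_T$, and the box is stripped at the end using the second derivability condition together with provable $\Sigma_1$-completeness.

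First I would note that for \emph{every} sentence $\tau$ the formula $\Box_S \tau$ is, provably in $\EA$, a $\Sigma_1$-sentence, being of the canonical form $\exists p\, \Prf_S(p, \gn{\tau})$. Hence inside $T$ the internal quantifier $\forall \sigma \in \Sigma_1$ of the hypothesis may be instantiated at $\Box_S \sigma$, which yields
$$
T \vdash \Box_T \left( \Box^{(2)}_S \sigma \to \Box_S \sigma \right)
$$
for every $\sigma$. Combining this with the instance $T \vdash \Box_T(\Box_S \sigma \to \sigma)$ of the hypothesis (valid for $\sigma \in \Sigma_1$) and chaining the two implications under $\Box_T$ — using the derivability conditions, which hold provably in $\EA$ with the sentences universally quantified over — I obtain
$$
T \vdash \forall \sigma \in \Sigma_1\, \Box_T \left( \Box^{(2)}_S \sigma \to \sigma \right).
$$

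Next, applying the second derivability condition inside $T$ gives $T \vdash \forall \sigma \in \Sigma_1 (\Box_T \Box^{(2)}_S \sigma \to \Box_T \sigma)$. On the other hand, since $\Box^{(2)}_S(x)$ is a $\Sigma_1$-formula in its numeric argument, parametrized provable $\Sigma_1$-completeness yields $\EA \vdash \forall \sigma (\Box^{(2)}_S \sigma \to \Box_T \Box^{(2)}_S \sigma)$. Composing these two facts (and using that $T$ extends $\EA$) produces exactly the target condition $(iii)$ with $k = 1$, and the theorem follows by Corollary \ref{cor:RS2}.

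The points requiring the most care, and which I expect to be the main obstacle, are the bookkeeping for the internal-quantifier manipulations: that $\EA$ proves $\Box_S \tau$ to be a $\Sigma_1$-sentence uniformly in $\tau$ (so that the substitution $\sigma := \Box_S \sigma$ into the hypothesis is a legitimate step \emph{inside} $T$), and that the chaining of implications and the distribution of $\Box_T$ are performed via the formalized, universally quantified forms of Löb's derivability conditions and of provable $\Sigma_1$-completeness rather than their schematic instances. All of these are available exactly in the form set up in Section 2, so no genuinely new difficulty arises beyond careful formalization.
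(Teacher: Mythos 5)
Your proposal is correct and follows essentially the same route as the paper: both arguments apply the boxed hypothesis twice (once at $\sigma$ and once at the $\Sigma_1$-sentence $\Box_S\sigma$), use provable $\Sigma_1$-completeness for $\Box^{(2)}_S\sigma$, and conclude via condition $(iii)$ of Corollary \ref{cor:RS2} with $k=1$. The only difference is cosmetic ordering — the paper first boxes $\Box_S\Box_S\sigma$ and then strips implications under $\Box_T$, whereas you chain the implications under $\Box_T$ first and apply $\Sigma_1$-completeness last.
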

\begin{proof}
Applying provable $\Sigma_1$-completeness and the assumption $T \vdash \forall \sigma \in \Sigma_1\, \Box_T \left( \Box_S \sigma \to \sigma \right)$ twice, we obtain
\begin{align*}
T \vdash \forall \sigma \in \Sigma_1 \Bigl( \Box_S \Box_S \sigma \Bigr. &\to \Box_T \Box_S \Box_S \sigma \\
&\to \Box_T\Box_S \sigma\\
&\to \Bigl. \Box_T\sigma \Bigr). 
\end{align*}
Point $(iii)$ of Corollary \ref{cor:RS2} with $k = 1$ then yields the result.
\end{proof}

As a corollary we get a family of pairs of theories with the same class of provably total computable functions (since the schema of local reflection for $T$ is contained in the extension of $T$ with set of all true $\Pi_1$-sentences) but non-isomorphic diagonalizable algebras.
\begin{corollary}\label{cor:locrfn}
For each $n \geqslant 1$, there are no epimorphisms from $\fD_{T + \Rfn_{\Sigma_1}(T)}$ onto $(\fL_T, \Box^{(n)}_T)$.
\end{corollary}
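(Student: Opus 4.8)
The plan is to apply Theorem \ref{th:locrfn} with the theory $U := T + \Rfn_{\Sigma_1}(T)$ in the role of its ``$T$'' and with the theory $T$ equipped with the iterated provability predicate $\Box^{(n)}_T$ in the role of its ``$S$''. For $n \geqslant 2$ the predicate $\Box^{(n)}_T$ is not standard, but by Lemma \ref{lm:boxkstand} it is $\EA$-provably equivalent to a standard one, so by the remark after Corollary \ref{cor:RS2} the framework applies and the target algebra is exactly $(\fL_T, \Box^{(n)}_T)$; for $n = 1$ nothing needs to be said, since $\Box^{(1)}_T = \Box_T$ is already standard. Thus everything reduces to verifying, in the present situation, the hypothesis of Theorem \ref{th:locrfn}, namely
$$
U \vdash \forall \sigma \in \Sigma_1\, \Box_U \left( \Box^{(n)}_T \sigma \to \sigma \right),
$$
where $\Box_U$ is the standard provability predicate of $U$.

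First I would record the unformalized form. Since the instances $\Box_T\tau \to \tau$ for $\tau \in \Sigma_1$ are literally axioms of $U$, and since $\Box^{(j)}_T\sigma$ is a $\Sigma_1$-sentence for every $j \geqslant 1$ (being of the form $\exists p\, \Prf_T(p, \cdot)$) while $\sigma$ itself is $\Sigma_1$ by assumption, chaining the instances
$$
\Box^{(n)}_T\sigma \to \Box^{(n-1)}_T\sigma \to \dots \to \Box_T\sigma \to \sigma
$$
gives $U \vdash \Box^{(n)}_T\sigma \to \sigma$ for each $\Sigma_1$-sentence $\sigma$, i.e.\ $U \vdash \Rfn_{\Sigma_1}(S)$ in the sense required by the theorem.

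To obtain the formalized uniform version I would exploit that the reflection instances are axioms. Writing the axiom set of $U$ as $\mathsf{Ax}_U(x) \leftrightharpoons \mathsf{Ax}_T(x) \lor \rho(x)$, where $\rho$ elementarily recognizes the g{\"o}delnumbers of the sentences $\Box_T\tau \to \tau$ with $\tau \in \Sigma_1$, we have $\EA \vdash \forall \tau \in \Sigma_1\, \mathsf{Ax}_U(\gn{\Box_T\tau \to \tau})$, and hence, as an axiom is provable by its one-line proof, $\EA \vdash \forall \tau \in \Sigma_1\, \Box_U(\Box_T\tau \to \tau)$. Instantiating $\tau = \Box^{(j)}_T\sigma$ for each fixed $j < n$ (legitimate, as $\EA$ proves $\Box^{(j)}_T\sigma \in \Sigma_1$ whenever $\sigma \in \Sigma_1$) yields $\EA \vdash \forall \sigma \in \Sigma_1\, \Box_U(\Box^{(j+1)}_T\sigma \to \Box^{(j)}_T\sigma)$, and composing these $n$ statements under $\Box_U$ by the provable closure of $\Box_U$ under modus ponens (derivability condition~2), applied a fixed standard number of times, gives $\EA \vdash \forall \sigma \in \Sigma_1\, \Box_U(\Box^{(n)}_T\sigma \to \sigma)$, a fortiori provable in $U$. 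Theorem \ref{th:locrfn} then yields the non-existence of an epimorphism from $\fD_U$ onto $(\fL_T, \Box^{(n)}_T)$.

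The main obstacle I anticipate is exactly the passage from the schematic, per-$\sigma$ reflection to its formalized uniform form $U \vdash \forall \sigma \in \Sigma_1\, \Box_U(\Box^{(n)}_T\sigma \to \sigma)$: one cannot simply box the schema. What makes it go through cheaply is that the relevant instances are axioms of $U$, so their inner $U$-provability is uniform already over $\EA$, and only $n$ (a fixed standard number) applications of the provable modus ponens are needed to assemble the chain under $\Box_U$ --- no genuine induction inside the box is required.
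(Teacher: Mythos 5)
Your proposal is correct and follows essentially the same route as the paper: apply Theorem \ref{th:locrfn} with $U = T + \Rfn_{\Sigma_1}(T)$ against $T$ equipped with $\Box^{(n)}_T$, verifying the hypothesis by chaining $n$ instances of reflection and noting that the formalized uniform version holds over $\EA$ because the reflection instances are axioms of $U$. You merely spell out two points the paper leaves implicit --- the appeal to Lemma \ref{lm:boxkstand} and the remark after Corollary \ref{cor:RS2} for the non-standard predicate $\Box^{(n)}_T$, and the fact that only a fixed standard number of compositions under $\Box_U$ is needed --- both of which are accurate.
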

\begin{proof}
By Theorem \ref{th:locrfn}, since $T + \Rfn_{\Sigma_1}(T)$ proves local $\Sigma_1$-reflection for $T$ with $\Box^{(n)}_T$ (apply reflection $n$ times) and this is formalizable in $\EA$.
\end{proof}

The following proposition provides another example of this kind.
\begin{proposition}
There are theories $U$, $S$ and $T$ such that 
$$
\fD_U \not \cong \fD_S \cong \fD_T, \text{ while }
\cF(U) = \cF(S) \neq \cF(T).
$$
\end{proposition}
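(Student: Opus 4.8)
The plan is to take three theories built over $\EA^+$: set $U = \EA^+$, $S = \EA^+ + \Rfn_{\Sigma_1}(\EA^+)$, and $T = \EA^+ + \RFN_{\Sigma_1}(\EA^+)$. The idea is to exploit that passing from a theory to its \emph{local} $\Sigma_1$-reflection changes the diagonalizable algebra (by Corollary \ref{cor:locrfn}) while leaving the class of provably total computable functions untouched, whereas passing to the \emph{uniform} $\Sigma_1$-reflection enlarges $\cF$ but, in this case, preserves the algebra up to isomorphism. All three theories are recursively axiomatizable $\Sigma_1$-sound extensions of $\EA$ with standard provability predicates, so the results of the previous sections apply.

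First I would record $\fD_S \cong \fD_T$ together with $\cF(S) \neq \cF(T)$. The isomorphism is precisely the example discussed above: $S$ and $T$ are $\EA^+$-provably $\Sigma_2$-equivalent, hence $\mathcal{B}(\Sigma_1)$-coherent, so Theorem \ref{th:shaviso} gives $\fD_S \cong \fD_T$. For $\cF(S) \neq \cF(T)$ I would argue directly with the reflection function. By Proposition \ref{prop:boxkRk} the graph of $\sR_{\EA^+}$ has a $\Sigma_1$-definition $\psi$ with $\EA \vdash \forall x\, \Box_{\EA^+}(\exists y\, \psi(x,y))$; applying $\RFN_{\Sigma_1}(\EA^+)$, which is available in $T$, yields $T \vdash \forall x\, \exists y\, \psi(x,y)$ exactly as in the discussion preceding Theorem \ref{th:locrfn}, so $\sR_{\EA^+} \in \cF(T)$. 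On the other hand $\sR_{\EA^+}$ is monotone and, by Proposition \ref{prop:Rdomprovtot}, dominates every monotone function in $\cF(\EA^+)$; since no function is dominated by itself, $\sR_{\EA^+} \notin \cF(\EA^+)$. As $\cF(S) = \cF(\EA^+)$ (next paragraph), this gives $\sR_{\EA^+} \in \cF(T) \setminus \cF(S)$.

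Next I would verify the two conditions involving $U$. For $\cF(U) = \cF(S)$ note $\cF(U) = \cF(\EA^+) \subseteq \cF(S)$ is immediate, while for the reverse inclusion one uses that each instance $\Box_{\EA^+}\sigma \to \sigma$ of $\Rfn_{\Sigma_1}(\EA^+)$ is a consequence of a true $\Pi_1$-sentence: of $\sigma$ itself (provable in $\EA$) when $\sigma$ is true, and of $\neg\Box_{\EA^+}\sigma$ (true by $\Sigma_1$-soundness) when $\sigma$ is false. Hence $S$ is contained in the extension of $\EA^+$ by true $\Pi_1$-sentences, and such extensions do not enlarge $\cF$ — this is the classical fact underlying the remark preceding Corollary \ref{cor:locrfn} (the $k=0$ analogue of Lemma \ref{lm:pi1external}) — so $\cF(S) \subseteq \cF(\EA^+) = \cF(U)$. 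For $\fD_U \not\cong \fD_S$, I apply Corollary \ref{cor:locrfn} with base theory $\EA^+$ and $n=1$: there is no epimorphism from $\fD_S = \fD_{\EA^+ + \Rfn_{\Sigma_1}(\EA^+)}$ onto $\fD_{\EA^+} = \fD_U$. Were $\fD_U \cong \fD_S$, the inverse isomorphism would be exactly such an epimorphism, a contradiction.

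Combining these gives $\fD_U \not\cong \fD_S \cong \fD_T$ and $\cF(U) = \cF(S) \neq \cF(T)$, as required. The one point deserving care is the equality $\cF(U) = \cF(S)$, which hinges entirely on the observation that local $\Sigma_1$-reflection adds no provably total function — this in turn rests on the reduction of its instances to true $\Pi_1$-axioms. The genuinely hard inputs, namely the isomorphism theorem and the non-isomorphism Corollary \ref{cor:locrfn}, are imported wholesale, so no new machinery is needed; the substance of the argument is the bookkeeping showing that these two phenomena coexist in a single family of theories over $\EA^+$.
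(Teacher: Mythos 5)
Your proposal is correct and follows essentially the same route as the paper: the same triple $U = \EA^+$, $S = \EA^+ + \Rfn_{\Sigma_1}(\EA^+)$, $T = \EA^+ + \RFN_{\Sigma_1}(\EA^+)$, with $\fD_U \not\cong \fD_S$ from Corollary \ref{cor:locrfn}, $\fD_S \cong \fD_T$ from $\mathcal{B}(\Sigma_1)$-coherence and Theorem \ref{th:shaviso}, and $\cF(U) = \cF(S)$ from containment in the true-$\Pi_1$ extension. The only (harmless, arguably nicer) deviation is that where the paper cites Beklemishev's computation $\cF(S) = \mathcal{E}_4 \neq \mathcal{E}_5 = \cF(T)$, you derive $\cF(S) \neq \cF(T)$ internally by exhibiting $\sR_{\EA^+} \in \cF(T) \setminus \cF(S)$ via Propositions \ref{prop:boxkRk} and \ref{prop:Rdomprovtot}.
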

\begin{proof}
Consider the following theories
$$
U = \EA^+, \quad S = \EA^+ + \Rfn_{\Sigma_1}(\EA^+), 
\quad T = \EA^+ + \RFN_{\Sigma_1}(\EA^+).
$$
Firstly, Corollary \ref{cor:locrfn} implies that $\fD_U \not \cong \fD_S$.  The isomorphism $\fD_S \cong \fD_T$ follows from Theorem \ref{th:shaviso}, since $T\equiv_{\Sigma_2} S$ (whence also $T \equiv_{\mathcal{B}(\Sigma_1)} S$) provably in $\EA^+$ by \cite[Proposition 5.4]{Bek03}, and $\EA^+$ is closed under $\Sigma_1$-collection rule by \cite[Corollary 5.2]{Bek98}, so $T$ and $S$ are $\mathcal{B}(\Sigma_1)$-coherent.

Now, since $S$ is contained in the extension of $U$ with the set of all true $\Pi_1$-sentences we get $\cF(S) = \cF(U)$. It is known (see, e.g., \cite[Corollary 3.3]{Bek03}) that 
$\cF(S) = \cF(U) = \mathcal{E}_4$ and $\cF(T) = \mathcal{E}_5$, so $\cF(T) \neq \cF(S)$, where $\mathcal{E}_i$ is the $i$th class of the Grzegorczyk hierarchy.
\end{proof}

The example given in the above proposition also demonstrates that it is not possible to strengthen the statement of the main theorem in a way discussed after Corollary \ref{cor:RS2} (so in a certain sense, it is optimal). Namely, if there is an epimorphism from $\fD_T$ onto $\fD_S$, then for each $k \in \nat$, 
no function in $\cF^k_{T}(T)$ dominates each function in $\cF^k_{S}(S)$. Indeed, this is not true already for $k = 0$ (and, in fact, for each $k$) with $T = \EA^+ + \RFN_{\Sigma_1}(\EA^+)$ and $S = \EA^+ + \Rfn_{\Sigma_1}(\EA^+)$, since $\cF^0_{T}(T) = \cF(T) = \mathcal{E}_5$ contains a function which dominates each function in $\mathcal{E}_4 = \cF(S) = \cF^0_{S}(S)$. The case $k = 0$ may seem to be degenerate, so let us also show that this does not hold for $k = 1$. We need to find a function $F(x) \in \cF_{T}(T)$ which dominates each function in $\cF_{S}(S)$. Let $f(x) \in \mathcal{E}_5$ be a function dominating each function in $\mathcal{E}_4$ (e.g., the diagonal of superexponential function $2^x_x$). We claim that $F(x) = \sR_S(f(x))$ has the desired property. Indeed, since $S \subseteq T$ provably in $\EA$ we have $\cF_{S}(S) \subseteq \cF_{T}(T)$, whence by Proposition $\ref{prop:boxkRk}$ we have $\sR_S(x) \in \cF_{T}(T)$. But then, since $f(x) \in \cF(T)$, we also have $\sR_S(f(x)) \in \cF_{T}(T)$ (see the proof of Lemma \ref{lm:Fkcompos} for such type of argument), as required. Now, given any $G(x) \in \cF_{S}(S)$, let $g(x)$ be an $S$-provably total computable function which gives the (g{\"o}delnumber of the) least $S$-proof of $\exists y\, \psi_G(\num{x}, y)$, where $\psi_G(x, y)$ is a $\Sigma_1$-definition of the graph of $G(x)$. By definition of $\sR_S(x)$, we then have $G(x) \leqslant \sR_S(g(x))$, which is dominated by $\sR_S(f(x))$ by the choice of $f(x)$, since $g(x) \in \cF(S) = \mathcal{E}_4$, and monotonicity of $\sR_S(x)$.

Note that in all examples of theories $T$ and $S$ with non-isomorphic diagonalizable algebras considered so far the theories were not provably equiconsistent (in both $T$ and $S$).
The next theorem gives a natural family of $\EA$-provably $\Pi_1$-equivalent whence also $\Pi_1$-coherent theories (cf.~Theorem \ref{th:shaviso}), since $\EA$ is closed under $\Sigma_1$-collection rule by \cite[Lemma 4.1]{Bek98},  with non-isomorphic algebras. Recall that the theory $T_\omega$ is defined as $T$ together with all its finite iterated consistency assertions, i.e., sentences $\neg \Box^{(n)}_T\bot$ for all $n \in \nat$. By Goryachev's theorem we have $T_\omega \equiv_{\Pi_1} T + \Rfn_{\Sigma_1}(T)$, and this holds provably in $\EA$ (see \cite[Proposition 6.1]{Bek03}).
\begin{theorem}\label{th:omegalocrfn}
	There is no epimorphism from $\fD_{T + \Rfn_{\Sigma_1}(T)}$ onto $\fD_{T_\omega}$.
\end{theorem}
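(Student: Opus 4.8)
The plan is to apply point $(ii)$ of Corollary \ref{cor:RS2} with $k = 1$ to the source theory $\hat T := T + \Rfn_{\Sigma_1}(T)$ and the target theory $T_\omega$; that is, to establish the inclusion
$$
\cF^2_{T_\omega}(\EA) \subseteq \cF^1_{\hat T}(\hat T).
$$
It should be stressed that, unlike in Corollary \ref{cor:locrfn}, one \emph{cannot} argue here through Theorem \ref{th:locrfn} or point $(iii)$ of Corollary \ref{cor:RS2}: instantiating the implication $\forall \sigma \in \Sigma_1(\Box^{(k+1)}_{T_\omega}\sigma \to \Box^{(k)}_{\hat T}\sigma)$ at $\sigma = \bot$ would force $\hat T$ to prove $\Con(T_\omega)$, which is impossible since $T_\omega \equiv_{\Pi_1} \hat T$ and $T_\omega \nvdash \Con(T_\omega)$ by G\"odel's theorem. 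Passing instead through the function classes avoids this obstruction, as condition $(ii)$ concerns totality of functions and is blind to $\bot$.

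The inclusion will be obtained in two steps. First, $T_\omega$ is a $\Pi_1$-extension of $T$ in the sense of the definition preceding Lemma \ref{lm:pi1extF}, its additional axioms $\neg\Box^{(n)}_T\bot$ being true $\Pi_1$-sentences picked out by an elementary formula. Hence a two-box analogue of Lemma \ref{lm:pi1extF}, proved by internalising these true $\Pi_1$-axioms at each box exactly as in the original argument, yields $\cF^2_{T_\omega}(\EA) = \cF^2_T(\EA)$. This step replaces $\Box_{T_\omega}$ by the plain predicate $\Box_T$ and discards all the iterated consistency axioms, which is precisely what removes the $\Con(T_\omega)$ difficulty.

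Second, I would show $\cF^2_T(\EA) \subseteq \cF^1_{\hat T}(\EA)$ using only that the instances of $\Rfn_{\Sigma_1}(T)$ are genuine axioms of $\hat T$. Fix $F \in \cF^2_T(\EA)$ with a $\Sigma_1$-graph $\psi_F(x,y)$, so $\EA \vdash \forall x\, \Box^{(2)}_T \exists y\, \psi_F(\num x, y)$. Reading the outer box literally, $\EA$ proves, uniformly in $x$, the existence of a $T$-proof $p$ of the $\Sigma_1$-sentence $\Box_T \exists y\, \psi_F(\num x, y)$, and the least such $p$ is an elementary function of $x$. Now $p$ is also a $\hat T$-proof; appending to it the reflection axiom $\Box_T \exists y\, \psi_F(\num x, y) \to \exists y\, \psi_F(\num x, y)$, whose code is produced elementarily in $x$ and which is recognised as an axiom by the elementary axiomatization of $\hat T$, and applying modus ponens, we obtain a $\hat T$-proof of $\exists y\, \psi_F(\num x, y)$ of size elementary in $x$. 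As the whole construction is elementary, it is formalizable, so $\EA \vdash \forall x\, \Box_{\hat T} \exists y\, \psi_F(\num x, y)$, i.e.\ $F \in \cF^1_{\hat T}(\EA) \subseteq \cF^1_{\hat T}(\hat T)$. Combining the two steps gives $\cF^2_{T_\omega}(\EA) \subseteq \cF^1_{\hat T}(\hat T)$, and Corollary \ref{cor:RS2} then excludes an epimorphism from $\fD_{\hat T}$ onto $\fD_{T_\omega}$.

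The subtle point, and the one I expect to require the most care, is the reflection move in the second step: the local principle $\Rfn_{\Sigma_1}(T)$ is used only by inserting a single one of its instances as an axiom into an explicitly built proof, never by applying a reflection rule within the reasoning of $\hat T$. This is exactly why one occurrence of $\Box_T$ can be stripped even though $\hat T$ does not prove the uniform principle $\RFN_{\Sigma_1}(T)$: the quantifier over $x$ lies outside $\Box_{\hat T}$, so for each individual $x$ a single reflection instance suffices. The same remark shows why the argument needs $k = 1$ and collapses for $k = 0$ --- having at least one box on the target side is what lets the reflection instance be absorbed into a constructed proof, whereas with no box one would be thrown back onto uniform reflection. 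The only genuinely routine gap to fill is the two-box version of Lemma \ref{lm:pi1extF} invoked in the first step.
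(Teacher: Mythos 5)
Your proof is correct in its essentials and uses the same two core ingredients as the paper: the $\Pi_1$-extension lemma to replace $\Box_{T_\omega}$ by $\Box_T$, and the $\EA$-provable implication $\forall \sigma \in \Sigma_1\left(\Box_T\Box_T\sigma \to \Box_{T+\Rfn_{\Sigma_1}(T)}\sigma\right)$ (your explicit construction --- append one reflection axiom instance and apply modus ponens --- is exactly how that implication is established). The one substantive difference is the decomposition. You go through point $(ii)$ of Corollary \ref{cor:RS2}, which forces you to prove the full class inclusion $\cF^2_{T_\omega}(\EA) \subseteq \cF^2_T(\EA)$, and for that you invoke an unproven ``two-box analogue'' of Lemma \ref{lm:pi1extF}. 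That analogue is true, but it is not as routine as you suggest: one must iterate the truth-predicate internalization of the $\Pi_1$-axioms under the outer box and re-express the graph formula at each stage. The paper avoids this entirely by using point $(i)$ instead: it only needs the single function $\sR^{(2)}_{T_\omega}(x)$ to land in $\cF^1_{\hat T}(\hat T)$, and it gets $\sR_{T_\omega} \in \cF^1_{T_\omega}(\EA) = \cF^1_T(\EA)$ from the single-box Lemma \ref{lm:pi1extF} and then $\sR^{(2)}_{T_\omega} = \sR_{T_\omega}\circ\sR_{T_\omega} \in \cF^2_T(\EA)$ from the composition Lemma \ref{lm:Fkcompos}. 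I would recommend adopting that route, or at least noting that the composition lemma lets you bypass the two-box extension lemma. A minor quibble with your side remark on why point $(iii)$ is unavailable: instantiating at $\sigma = \bot$ yields $\hat T \vdash \Box^{(2)}_{T_\omega}\bot \to \Box_{\hat T}\bot$, i.e.\ a conditional consistency statement, not outright $\hat T \vdash \Con(T_\omega)$; the conclusion that $(iii)$ fails here is right, but the stated reason is too quick.
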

\begin{proof}
We apply point $(i)$ of Corollary \ref{cor:RS2}. By Lemma \ref{lm:pi1extF} and Proposition \ref{prop:boxkRk} we have
$$
\sR_{T_\omega}(x) \in \cF_{{T_\omega}}(\EA) = \cF_{T}(\EA),
$$
whence, by Lemma \ref{lm:Fkcompos}, $\sR^{(2)} _{T_\omega}(x) \in \cF^2_{T}(\EA) \subseteq \cF_{{T + \Rfn_{\Sigma_1}(T)}}(T + \Rfn_{\Sigma_1}(T))$, where the inclusion is due to
$$
\EA \vdash \forall \sigma \in \Sigma_1 \left(
\Box_T\Box_T \sigma \to \Box_{T+ \Rfn_{\Sigma_1}} \sigma
\right).
$$
\end{proof}

Since the non-isomorphism condition is formulated in terms of the reflection function (and the related classes) one may ask about the relationship between such functions for various extensions of a given theory $T$. E.g., in the proof of Theorem \ref{th:omegalocrfn} we see 
that $\sR_{T_\omega}(x)$ has the same growth rate (up to an elementary transformation of the input) as $\sR_T(x)$ (and the same holds for any $\Pi_1$-extension of $T$, see Lemma \ref{lm:pi1extF}).
An easy argument using Proposition \ref{prop:boxkRk} shows that $\sR_{T + \Rfn_{\Sigma_1}(T)}(x)$ dominates
each finite iteration of $\sR_T(x)$. The next proposition gives a more precise description of its growth rate.
\begin{proposition}
	The function $\sR_{T + \Rfn_{\Sigma_1}(T)}(x)$ has the same rate of growth as the diagonal function $\sR^{(x+1)}_T(x)$ 
	up to an elementary transformation of the input.
\end{proposition}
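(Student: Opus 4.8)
Write $R := T + \Rfn_{\Sigma_1}(T)$ and let $D(x) := \sR^{(x+1)}_T(x)$ denote the diagonal function. The plan is to establish the claim as a two‑sided domination up to an elementary reparametrisation of the input: I will produce elementary functions $e(x)$ and $g(x)$ with $D(n) \leqslant \sR_R(e(n))$ and $\sR_R(n) \leqslant D(g(n))$ for all sufficiently large $n$. Together these say exactly that $\sR_R$ and $D$ have the same rate of growth up to an elementary transformation of the input.

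For the lower bound $D(n) \leqslant \sR_R(e(n))$ I would exhibit, for each $n$, a short $R$-proof of a $\Sigma_1$-sentence whose least witness is at least $\sR^{(n+1)}_T(n)$. By Proposition \ref{prop:boxkRk} together with Lemma \ref{lm:Fkuniq} there is, for each $k$, a $\Sigma_1$-formula $\theta_k(x, y)$ defining the graph of $\sR^{(k)}_T$ with $\EA \vdash \forall x\, \Box^{(k)}_T(\exists ! y\, \theta_k(\num x, y))$, and (checking the inductive construction via Lemma \ref{lm:Fkcompos}) the underlying $\EA$-proof can be produced uniformly, so that $\EA \vdash_{e_0(n)} \Box^{(n+1)}_T \exists y\, \theta_{n+1}(\num n, y)$ for some elementary $e_0$. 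Writing $\delta_j := \Box^{(j)}_T \exists y\, \theta_{n+1}(\num n, y)$, each $\delta_j$ is $\Sigma_1$ with g\"odelnumber elementary in $n$, and $R$ proves the reflection instance $\delta_j \to \delta_{j-1}$ (which is $\Box_T \delta_{j-1} \to \delta_{j-1}$) for $1 \leqslant j \leqslant n+1$. Chaining these $n+1$ instances with the $\EA$-proof of $\delta_{n+1}$ yields an $R$-proof of $\delta_0 = \exists y\, \theta_{n+1}(\num n, y)$ of g\"odelnumber bounded by an elementary $e(n)$, and since the least witness of $\delta_0$ is at least the value $\sR^{(n+1)}_T(n)$, the definition of $\sR_R$ gives $\sR^{(n+1)}_T(n) = D(n) \leqslant \sR_R(e(n))$.

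For the upper bound $\sR_R(n) \leqslant D(g(n))$ I would analyse an arbitrary $R$-proof of g\"odelnumber $\leqslant n$ of a $\Sigma_1$-sentence $\sigma$. Such a proof is a $T$-derivation from finitely many instances $\Box_T\gamma_i \to \gamma_i$ with $i < m$, $m \leqslant n$, each $\gamma_i \in \Sigma_1$ and $\gn{\gamma_i} \leqslant n$. Arranging these instances in a dependency order, so that the subderivation establishing $\Box_T\gamma_i$ appeals only to the earlier conclusions $\gamma_0, \dots, \gamma_{i-1}$, I would bound by induction the size $P_i$ of a pure $T$-proof of $\gamma_i$. Replacing the lower hypotheses by their $T$-proofs turns the subderivation of $\Box_T\gamma_i$ into a pure $T$-proof of g\"odelnumber elementary in $P_{i-1}$, so that $\gamma_i$ has least witness at most $\sR_T(\mathrm{elem}(P_{i-1}))$ by the defining property of $\sR_T$, hence a $T$-proof of g\"odelnumber elementary in that value; since an elementary function of $\sR_T$ is absorbed into one further application of $\sR_T$ (Proposition \ref{prop:Rdomprovtot} and Corollary \ref{cor:fRltRc}), this yields $P_i \leqslant \sR^{(2)}_T(P_{i-1})$ and hence $P_i \leqslant \sR^{(2i+1)}_T(n)$. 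Treating $\sigma$ the same way gives $T \vdash_{\sR^{(2m+1)}_T(n)} \sigma$, so the least witness of $\sigma$ is at most $\sR^{(2m+2)}_T(n) \leqslant \sR^{(2n+2)}_T(n) \leqslant D(2n+2)$, using $m \leqslant n$ and monotonicity. Taking $g(x) := 2x+2$ gives $\sR_R(n) \leqslant D(g(n))$.

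The main obstacle is the upper bound, and two points need care. First, the passage to a dependency ordering of the reflection instances is a proof-normalisation step: I must argue that the occurrences of the axioms $\Box_T\gamma_i \to \gamma_i$ in a Hilbert-style $R$-proof can be linearised so that the premise $\Box_T\gamma_i$ of each reflection step is derived using only strictly earlier reflected conclusions, which follows from the well-foundedness of the proof tree but requires careful bookkeeping of multiple uses of the same instance. Second, and crucially, the elementary overheads incurred at each of the up to $n$ layers must not accumulate into a super-elementary factor; this is exactly where I rely on the fact that a single application of $\sR_T$ already dominates every elementary function (Proposition \ref{prop:Rdomprovtot}), so that each layer costs only one extra iteration of $\sR_T$ while the argument of the iterated $\sR_T$ stays elementary in $n$ and the number of iterations stays linear in $n$. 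It is the interplay of these two features that produces precisely the diagonal growth rate, and matching it on both sides is what pins down the elementary input transformation.
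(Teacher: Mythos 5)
Your lower bound (that $\sR^{(n+1)}_T(n) \leqslant \sR_{T+\Rfn_{\Sigma_1}(T)}(e(n))$ for an elementary $e$) is essentially the paper's argument, externalized: the paper formalizes Proposition~\ref{prop:boxkRk} inside $\EA$ and strips the $n+1$ boxes by local reflection under $\Box_{T+\Rfn_{\Sigma_1}(T)}$, whereas you build the short proof of $\exists y\,\theta_{n+1}(\num{n},y)$ by hand; both versions rest on the same uniformity (in $k$) of the proof of Proposition~\ref{prop:boxkRk}, and this half is fine.

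The upper bound, however, has a genuine gap. You assume that a Hilbert-style proof of a $\Sigma_1$-sentence $\sigma$ from instances $\Box_T\gamma_i\to\gamma_i$ can be rearranged so that each instance is used as a ``reflection step'', i.e.\ that its antecedent $\Box_T\gamma_i$ is actually derived from $T$ together with the earlier conclusions $\gamma_0,\dots,\gamma_{i-1}$, after which $\gamma_i$ acquires a pure $T$-proof of size $P_i$. No such structure exists in general: a reflection instance can be, and typically is, used contrapositively, with an antecedent that is not derivable at all. Already the instance $\Box_T\bot\to\bot$ yields $\neg\Box_T\bot$ without any derivation of $\Box_T\bot$, and such consequences can feed into the proof of a $\Sigma_1$-sentence; for such a $\gamma_i$ the quantity $P_i$ (``the size of a pure $T$-proof of $\gamma_i$'') is undefined and your induction cannot start. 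This is not the bookkeeping issue about linearising the proof tree that you flag --- the claimed dependency structure simply is not there. The paper's proof avoids the problem by never attempting to prove the $\gamma_i$ in $T$: it establishes, by reflexive induction via L{\"o}b's theorem, that $\EA\vdash\forall z\,A(z)$ where $A(z)$ asserts that $\Box_T\bigl(\bigwedge_{i<z}(\Box_T\psi_i\to\psi_i)\to\sigma\bigr)$ implies $\Box_T\Box^{(z)}_T\sigma$. The induction step performs a case distinction \emph{inside} $T$ on which of the $\Sigma_1$-sentences $\Box_T\psi_j$ hold: if some $\Box_T\psi_j$ holds, that instance is discharged and the induction hypothesis applies; if none hold, all instances are vacuously true, $\sigma$ follows, and provable $\Sigma_1$-completeness supplies the $z+1$ boxes. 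That internal case distinction is the idea your argument is missing; with it one gets $T\vdash_{r(x)}\Box^{(z)}_T\sigma$ for some $z<x$ and then strips the boxes with $z+1$ applications of $\sR_T$, which is where the diagonal $\sR^{(x+1)}_T(x)$ actually comes from.
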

\begin{proof}
Firstly, we show that $\sR^{(x+1)}_T(x) \leqslant 
\sR_{T + \Rfn_{\Sigma_1}(T)}(q(x))$ for some elementary function $q(x)$.
We use the fact that the proof of Proposition \ref{prop:boxkRk} is formalizable in $\EA$, i.e.,
$$
\EA \vdash \forall z\, \Box_\EA \left(
\forall x\, \Box^{(z)}_T \exists y \left( 
\sR^{(z)}_T(\num{x}) = y
\right)
\right),
$$
where for each $z$ the formula $\sR^{(z)}_T(\num{x}) = y$ is constructed in a most natural way (to represent $z$-fold composition of $\sR_T(x)$ with itself) using the formula $\psi(x, y)$ defining the graph of $\sR_T(x)$. It follows that
\begin{align*}
\EA \vdash \forall z\, \Box_\EA \left(
\forall x\, \Box^{(z)}_T \exists y \left( 
\sR^{(z)}_T(\num{x}) = y
\right)
\right) 
&\to \forall z\,\Box_\EA \Box^{(z+1)}_T \exists y \left( 
\sR^{(z+1)}_T(\num{z}) = y
\right)\\
&\to \forall z\,\Box_{T + \Rfn_{\Sigma_1}(T)} 
\Box^{(z+1)}_T \exists y \left( 
\sR^{(z+1)}_T(\num{z}) = y
\right)\\
&\to \forall z\,\Box_{T + \Rfn_{\Sigma_1}(T)} 
\exists y \left( 
\sR^{(z+1)}_T(\num{z}) = y
\right),
\end{align*}
whence $\EA \vdash  \forall x\,\Box_{T + \Rfn_{\Sigma_1}(T)} 
\exists y \left( 
\sR^{(x+1)}_T(\num{x}) = y
\right)$, so $T + \Rfn_{\Sigma_1}(T) \vdash_{q(x)} \exists y \left( 
\sR^{(x+1)}_T(\num{x}) = y
\right)$ for some elementary function $q(x)$ (being provably total in $\EA$). Consequently, we obtain $\sR^{(x+1)}_T(x) \leqslant 
\sR_{T + \Rfn_{\Sigma_1}(T)}(q(x))$, as required.

Now, we show that  $\sR_{T + \Rfn_{\Sigma_1}(T)}(x) \leqslant \sR^{(r(x)+1)}_T(r(x))$ for some elementary function $r(x)$. To obtain this we prove that there is a monotone elementary function $r(x)$ which, given a (g{\"o}delnumber of a) $T + \Rfn_{\Sigma_1}(T)$-proof $x$ of a $\Sigma_1$-sentence $\sigma$, transforms it into a $T$-proof $r(x)$ of $\Box^{z}_T\sigma$ for some $z < x$. The required inequality then follows by the definition and monotonicity properties of $\sR_T(x)$.
We prove that $\EA \vdash \forall z\, A(z)$, where 
$$
A(z) \leftrightharpoons 
\forall \psi\, \forall \sigma \in \Sigma_1 \left(
\Box_T \left(\bigwedge_{i < z} \left(\Box_T \psi_i \to \psi_i \right) 
\to \sigma 
\right) 
\to \Box_T \Box^{(z)}_T\sigma
\right).
$$
The proof goes by reflexive induction on $z$, i.e., we show that
$$
\EA \vdash A(0) \land \forall z \left(\Box_\EA A(\num{z}) \to A(z + 1)\right),
$$
whence $\EA \vdash \Box_\EA \forall z\, A(z) \to \forall z\, A(z)$ follows, therefore $\EA \vdash \forall z\, A(z)$, by L{\"o}b's theorem.

We argue informally in $\EA$. Clearly, we have $A(0)$. Now, assuming $\EA \vdash A(\num{z})$ we derive $A(z+1)$. Fix an arbitrary $\psi$ and $\sigma$ such that
$$
T \vdash \bigwedge_{i < z + 1} \left(\Box_T \psi_i \to \psi_i \right) 
\to \sigma.
$$
By $\Sigma_1$-completeness we get $T \vdash \Box_T \left( \bigwedge_{i < z + 1} \left(\Box_T \psi_i \to \psi_i \right) 
\to \sigma\right)$.
It follows that for each $j < z + 1$ we have
\begin{align*}
T \vdash \Box_T \psi_j 
&\to \Box_T \left( \psi_j \land \left(\bigwedge_{i < z + 1} \left(\Box_T \psi_i \to \psi_i \right) 
\to \sigma\right)\right)\\
&\to \Box_T \left( \left(\Box_T \psi_j \to \psi_j \right) \land \left(\bigwedge_{i < z + 1} \left(\Box_T \psi_i \to \psi_i \right) 
\to \sigma\right)\right)\\
&\to \Box_T \left(\left(\bigwedge_{i < z + 1, i \neq j} \left(\Box_T \psi_i \to \psi_i \right) 
\to \sigma\right)\right)\\
&\to \Box_T\Box^{(z)}_T\sigma,
\end{align*}
where the last implication uses the reflexive induction hypothesis $\EA \vdash A(\num{z})$.
We also have the following derivation 
\begin{align*}
T \vdash \bigwedge_{i < z + 1} \neg \Box_T \psi_i  
&\to \left(\bigwedge_{i < z + 1} \left(\Box_T \psi_i \to \psi_i \right) \right)\\
&\to \sigma\\
&\to \Box_T\Box^{(z)}_T\sigma,
\end{align*}
where we use provable $\Sigma_1$-completeness for $\sigma \in \Sigma_1$.
Combining this $z + 2$ derivations (by considering $z + 2$ cases) we derive
$T \vdash \Box_T\Box^{(z)}_T\sigma$, i.e., $T \vdash \Box^{(z+1)}_T\sigma$, as required.

Since the deduction theorem can be formalized in $\EA$ there is a strictly monotone elementary  function $t(x)$ such that
$T + \Rfn_{\Sigma_1}(T)\vdash_x \sigma$ implies
$T \vdash_{t(x)}\bigwedge_{i < z} \left(\Box_T \psi_i \to \psi_i\right) \to \sigma$ for some $\Sigma_1$-sentences $\psi_0, \dots, \psi_{z-1}$. From $\EA\vdash \forall z\, A(z)$ we obtain a strictly monotone elementary function $s(y)$ such that 
$T \vdash_y \bigwedge_{i < z} \left(\Box_T \psi_i \to \psi_i\right) \to \sigma$ implies $T \vdash_{s(y)} \Box^{(z)}_T \sigma$.

Finally, we define $r(x) = s(t(x)) + 1$ (to get $r(x) > x$) and obtain the following 
$$
T + \Rfn_{\Sigma_1}(T)\vdash_x \sigma \Longrightarrow
T \vdash_{r(x)} \Box^{(z)}_T\sigma,
$$
for some $z < x$ (since the number of axioms of $T + \Rfn_{\Sigma_1}(T)$ in the proof is less than its g{\"o}delnumber). Thus, by the definition of $\sR_T(x)$, we get that the least witness to $\sigma$ is bounded by
$\sR^{(z+1)}_T(r(x)) \leqslant \sR^{(r(x)+1)}(r(x))$. The last inequality is obtained as follows. Using Proposition \ref{prop:Rdomprovtot} fix some $c \in \nat$ such that $x \leqslant \sR_T(x)$ for all $x \geqslant c$. It follows that for each $y \geqslant 1$ we have $x \leqslant \sR^{(y)}_T(x)$ for all $x \geqslant c$. Since we may assume $r(x) \geqslant c$ for all $x$ (otherwise, consider $r(x) + c$), and $z < x < r(x)$ 
we get $r(x) \leqslant \sR^{(r(x) + 1 - z)}(r(x))$, whence, by monotonicity of $\sR_T(x)$, the required inequality follows.
\end{proof}
This proposition may be contrasted with \cite[Lemma 3.7]{Bek05}, asserting that for any honest function $f(x)$ we have 
$
\EA \vdash \forall x\, \exists y \left(f^{(x+1)}(x) = y\right) \eqv \RFN_{\Sigma_1}(\EA + \forall x\, \exists y \left(f(x) = y\right)).
$

Let us also note that as an immediate corollary we obtain the equivalence between two times iterated local $\Sigma_1$-reflection for $T$ with standard provability predicate $\Box_T$ and a local $\Sigma_1$-reflection for $T$ with provability predicate $\exists x \Box^{(x)}_T \phi$ (which roughly corresponds to the provability with Parikh's rule).

\section{Bimodal diagonalizable algebras}

As has been already mentioned after the proof of the main theorem
the first-order property demonstrating the lack of elementary equivalence between $\fD_T$ and $\fD_S$ found by Shavrukov is not entirely algebraic in its form and relies on the existence of a quite complicated translation from the language of $\nat$ into that of $\fD_T$ given in \cite{Shav97b}.

By enriching the signature of a diagonalizable algebra with new constants or operators one may hope to find a property distinguishing $\fD_T$ and $\fD_S$, which is more algebraic in nature and provides an easier way to obtain non-isomorphism results.
The work in this direction has been done by Beklemishev \cite{Bek96}, who showed that the diagonalizable algebras of $\EA$ and $\PA$ enriched with constants for (a $\Pi_2$-sentence which axiomatizes) the uniform $\Sigma_1$-reflection schema for a theory are not elementarily equivalent, whence not isomorphic. Beklemishev also showed that $\RFN_{\Sigma_1}(T)$ is definable by a formula of second order propositional provability logic (it is not definable in the language of $\fD_T$ by the results of \cite{Shav97a}, since $\RFN_{\Sigma_1}(T)$ is not equivalent to a boolean combination of $\Sigma_1$-sentences (see, e.g., \cite[Proposition 2.16]{Bek05}))
and constructed a simple example of such formula that distinguishes $\EA$ and $\PA$.
Note that in this example we have $\fD_\EA \not \cong \fD_\PA$ (by Theorem \ref{th:shavnoniso}), so the enriched signature here allows to obtain an easier non-isomorphism proof for expansions, but the reducts (to the language of diagonalizable algebras) are already non-isomorphic.

In this section we enrich the signature of diagonalizable algebras with a new modality $[1]$, which is interpreted as $1$-provability, and consider an expansion of the algebra $\fD_T$ to a bimodal diagonalizable algebra $(\fD_T, [1]_T)$. In our proofs we rely on certain descriptions of the set $\cC_T$ of sentences, which are provably 1-provable in a given theory $T$, obtained in \cite{KolmBek19}. Recall the notation $C_S(T) = \{\phi \mid T \vdash [1]_S \phi \}$ for the set of all sentences which are $T$-provably 1-provable in~$S$.

We provide several examples of pairs of theories which,  unlike in Beklemishev's example, have isomorphic diagonalizable algebras, but non-isomorphic bimodal diagonalizable algebras.
We start with a quite natural pair of theories with such algebras. 

\begin{proposition}\label{prop:isi1pra}
The bimodal algebras $(\fD_{\ISi_1}, [1]_{\ISi_1})$ and $(\fD_{\PRA}, [1]_{\PRA})$ are not isomorphic, 
while the reducts $\fD_{\ISi_1}$ and $\fD_\PRA$ are isomorphic.
\end{proposition}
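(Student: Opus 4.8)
The plan is to treat the two halves separately, disposing of the isomorphism of the reducts quickly and concentrating on the bimodal non-isomorphism.

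For the reducts I would apply Theorem~\ref{th:shaviso} and verify that $\ISi_1$ and $\PRA$ are $\mathcal{B}(\Sigma_1)$-coherent, mirroring the earlier proposition on $\EA^+$-theories. The proof-theoretic input is the Parsons--Mints--Takeuti theorem that $\ISi_1$ is $\Pi_2$-conservative over $\PRA$. Since every $\mathcal{B}(\Sigma_1)$-sentence is provably equivalent to a $\Pi_2$-sentence, condition $(i)$ of coherence is immediate. For condition $(ii)$ I would note that once the quantifier over $\gamma$ is bounded by the proof-bound $x$ (as the antecedents $\exists p\leqslant x\,\Prf(p,\gn{\gamma})$ force), the uniform formalization is itself a $\Pi_2$-sentence; a formalized version of $\Pi_2$-conservativity (with an elementary bound on the proof transformation) makes it provable in $\ISi_1$, and then $\Pi_2$-conservativity transfers its provability down to $\PRA$, so \emph{both} theories prove $(ii)$. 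Together with closure under the $\Sigma_1$-collection rule this yields $\mathcal{B}(\Sigma_1)$-coherence and hence $\fD_{\ISi_1}\cong\fD_\PRA$.

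For the bimodal algebras I would argue by contradiction: a bimodal isomorphism $\iota\colon(\fD_{\ISi_1},[1]_{\ISi_1})\to(\fD_\PRA,[1]_\PRA)$ is in particular a $\fD$-isomorphism, so it preserves $\Box$, the boolean structure, the sentences $\sharp^n$ and the constant $\top$, and it must additionally intertwine the two $1$-provability operators, $\iota([1]_{\ISi_1}a)=[1]_\PRA\,\iota(a)$. The first observation is that the cheap candidate invariant is useless: since $\ISi_1$ and $\PRA$ are provably $\Sigma_1$-equivalent (a by-product of the coherence just established), their uniform $\Sigma_1$-reflection principles coincide, so $\iota$ automatically respects the element $[1]_T\bot=[\neg\RFN_{\Sigma_1}(T)]$. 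The genuine difference must therefore be sought one level higher, which is exactly where $[1]_T$ lives: $[1]_T\phi$ is a $\Sigma_2$-sentence for \emph{every} $\phi$, so the sets $C_{\ISi_1}(\ISi_1)$ and $C_\PRA(\PRA)$ record how much $1$-provability each theory can verify. Here I would invoke the descriptions of these sets from \cite{KolmBek19}. The structural fact powering the argument is that $\PRA$ is $\Pi_2$-axiomatizable whereas $\ISi_1$ is not (its $\Pi_2$-fragment is exactly $\PRA$, which is strictly weaker), so the two theories diverge in what they prove $1$-provable.

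Concretely I would fix $\psi$ with $\ISi_1\vdash\psi$ but $\PRA\nvdash\psi$; then $[1]_{\ISi_1}[\psi]=\top$, so the isomorphism forces $[1]_\PRA\,\iota([\psi])=\top$, i.e. $\PRA\vdash[1]_\PRA\psi^*$ for a representative $\psi^*$ of $\iota([\psi])$. Using the \cite{KolmBek19} characterization I would show that $\psi^*$ inherits the same provability-logical type over $\PRA$ that $\psi$ has over $\ISi_1$, while the $\Sigma_2$-fact $[1]_\PRA\psi^*$ cannot be $\PRA$-provable precisely because $\PRA$'s $1$-provability strength is pinned down by its $\Pi_2$-axiomatizability. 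I expect this last step to be the main obstacle: the $\fD$-isomorphism is handed to us abstractly by Theorem~\ref{th:shaviso} and is \emph{not} given by any explicit, complexity-controlled translation, so I cannot simply compute with $\psi^*$. The way around it is to phrase the distinguishing property entirely in the first-order language of bimodal diagonalizable algebras — as \cite{KolmBek19} makes possible for the sets $C_T(T)$ — so that it is preserved by any isomorphism automatically; the non-isomorphism then reduces to checking that this single bimodal sentence is true in $(\fD_{\ISi_1},[1]_{\ISi_1})$ and false in $(\fD_\PRA,[1]_\PRA)$, i.e. to a comparison of the $1$-provability strengths of the two theories rather than to an impossible calculation with the abstract map.
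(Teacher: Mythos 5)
Your treatment of the reducts is correct and is exactly the paper's argument: Parsons' theorem formalized in $\EA^+$, closure under the $\Sigma_1$-collection rule, $\mathcal{B}(\Sigma_1)$-coherence, and Theorem~\ref{th:shaviso}.

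The bimodal half has a genuine gap, and its one concrete step fails outright. If you pick $\psi$ with $\ISi_1 \vdash \psi$, then $[\psi]_{\sim_{\ISi_1}}$ is already the top element of $\fL_{\ISi_1}$, so any isomorphism sends it to $\top$ of $\fL_\PRA$ and the conclusion $\PRA \vdash [1]_\PRA \top$ is vacuous; no contradiction can come out of this witness. (Relatedly, the sets $C_T(T)$ are deductively closed filters containing all theorems of $T$, so the mere difference between the theorems of $\ISi_1$ and of $\PRA$ is invisible to them as subsets of the respective Lindenbaum algebras.) Your fallback --- ``phrase a distinguishing property in the language of the bimodal algebra'' --- is the right meta-strategy, but you never say what the property is, and that identification is the entire content of the proof. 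The paper's invariant is: \emph{the filter $F = \{a : [1]a = \top\}$ is generated by the elements of the form $\Box b \to b$}, i.e., every element of $F$ lies above a finite meet of local-reflection instances. By \cite{KolmBek19}, $C_\PRA(\PRA)$ is axiomatized over $\PRA$ by $\Rfn(\PRA)$ (this is where the $\Pi_2$-axiomatizability of $\PRA$ enters), so the property holds in $(\fD_\PRA,[1]_\PRA)$; whereas $C_{\ISi_1}(\ISi_1)$ is axiomatized by the transfinitely iterated $\Rfn(\ISi_1)_\omega$ and in particular contains $\Rfn(\ISi_1 + \Rfn(\ISi_1))$, which is not derivable from $\ISi_1 + \Rfn(\ISi_1)$, so the property fails in $(\fD_{\ISi_1},[1]_{\ISi_1})$. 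Pushing a finite conjunction $\bigwedge_{i<n}(\Box_\PRA\psi_i \to \psi_i) \to \chi$ through the isomorphism then gives the contradiction. Note also that the paper explicitly refrains from claiming this property is expressible by a single first-order sentence of the bimodal language (it involves an unbounded finite conjunction); preservation under isomorphism is all that is used, so your hope for ``a single bimodal sentence'' is stronger than what is needed or established.
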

\begin{proof}
By Parsons' theorem $\ISi_1 \equiv_{\Pi_2} \PRA$, and this result is known to hold provably in $\EA^+$, which is closed under $\Sigma_1$-collection rule by \cite[Corollary 5.2]{Bek98},  
whence these two theories are $\mathcal{B}(\Sigma_1)$-coherent.
Theorem \ref{th:shaviso} then implies $\fD_{\ISi_1} \cong  \fD_\PRA$.

Aiming at a contradiction, assume $(\cdot)^* \colon (\fL_{\ISi_1}, \Box_{\ISi_1}, [1]_{\ISi_1}) \to (\fL_\PRA, \Box_\PRA, [1]_\PRA)$ is an isomorphism.
Consider the filters defined in these algebras by the formula $F(p) := [1] p = \top$.
These are precisely the sets $\cC_{\ISi_1}$ and $\cC_\PRA$ of provably 1-provable sentences (their equivalence classes) in each theory. 
By the results of \cite{KolmBek19} the filter $\cC_\PRA = C_\PRA(\PRA)$
is generated by $\Rfn(\PRA)$ over $\PRA$ (apply \cite[Corollary 3.3]{KolmBek19}, \cite[Lemma 4.1]{KolmBek19} and the axiomatization of $\mathsf{PRA}$ given by \cite[Proposition 3.9]{Bek05}), i.e., by the elements of the form $\Box_\PRA \psi \to \psi$,
while $\cC_{\ISi_1} = C_{\ISi_1}(\ISi_1)$ is not, being generated by $\Rfn(\ISi_1)_\omega$ (see \cite[Corollary 5.3]{KolmBek19}). 
Since the set of all instances of local reflection schema and the filter are definable in the language of bimodal algebra, the generation property must be preserved by an isomorphism, which is a contradiction.

More formally, assume $\phi \in \cC_{\ISi_1}$, i.e., $\ISi_1 \vdash [1]_{\ISi_1} \phi$. By the isomorphism, there is a $\chi$ such that
$\PRA  \vdash [1]_\PRA \chi$ and $(\chi)^* = \phi$. In particular, $\chi \in \cC_\PRA$, so for some $n \in \nat$  there are sentences $\psi_0$, $\dots$, $\psi_{n-1}$, such that
$$
\PRA \vdash \bigwedge_{i < n} (\Box_\PRA \psi_i \to \psi_i) \to \chi.
$$
Applying the isomorphism, we get
$$
\ISi_1 \vdash \bigwedge_{i < n} (\Box_{\ISi_1}  (\psi_i)^* \to (\psi_i)^*) \to \phi,
$$
whence $\ISi_1  + \Rfn(\ISi_1) \vdash \phi$. It follows that $\cC_{\ISi_1 }$ is generated by $\ISi_1  + \Rfn(\ISi_1 )$, which 
contradicts the fact that it is generated by $\Rfn(\ISi_1)_\omega$ and, in particular, contains $\Rfn(\ISi_1 + \Rfn(\ISi_1))$.
\end{proof}

Let us note that the non-isomorphism part of Proposition \ref{prop:isi1pra} is proved by isolating a natural algebraic property, which is relatively easy to formulate (however, we do not claim it to be definable in the language of bimodal diagonalizable algebras): the filter defined by the identity $[1]\theta = \top$ is generated by the elements of the form $\Box \psi \to \psi$ (i.e., there is some finite conjunction of such elements below any given element of this filter).

The above proof also goes through for the pair $S = \EA + \Rfn_{\Sigma_1}(\EA)$ and $T = \EA + \Rfn_{\Sigma_2}(\EA)$. The theories $S$ and $T$ are $\mathcal{B}(\Sigma_1)$-equivalent provably in $\EA$ (see \cite[Theorem 6]{Bek05}), whence they are $\mathcal{B}(\Sigma_1)$-coherent,
since $\EA$ is closed under $\Sigma_1$-collection rule by \cite[Lemma 4.1]{Bek98}. By the results of \cite{KolmBek19}, we have $\cC_S \equiv S + \Rfn(S)$ and $\cC_T \equiv T + \Rfn(T + \Rfn(T))$ (see \cite[Theorem 1]{KolmBek19}).
Note that the above argument relies on the fact that $\cC_S$ is generated by local reflection $\Rfn(S)$ (which is definable as a subset of the diagonalizable algebra), and it does not seem to work in case, when $\cC_S$ is generated by iterated reflection, e.g., $\Rfn(S + \Rfn(S))$, and not just $\Rfn(S)$.

In the following theorem and its corollary we show, however, that even when $\cC_S$ contains iterated local reflection it is still possible to obtain the analogue of Proposition \ref{prop:isi1pra} (or rather the example considered after the proposition). The corollary provides a family of natural pairs of theories with isomorphic diagonalizable algebras, but non-isomorphic bimodal diagonalizable algebras.

We still employ the idea of comparing the filters $\cC_S$ and $\cC_T$ in terms of the amount of local reflection contained in them. 
Now, however, the plan is to consider diagonalizable algebras 
of theories $\cC_T = C_T(T)$ and $\cC_S = C_S(S)$ 
instead of the bimodal algebras 
of $T$ and $S$, and to apply (a strengthened version of)  Shavrukov's non-isomorphism theorem, i.e., Theorem \ref{th:locrfn}. The first part of this plan is contained in the following lemma.

\begin{lemma}\label{lm:surjbimod}
Any epimorphism $(\cdot)^* \colon (\fL_T, \Box_T, [1]_T) \to (\fL_S, \Box_S, [1]_S)$ induces an epimorphism from $(\fL_{\cC_T}, \Box_T[1]_T)$ onto $(\fL_{\cC_S}, \Box_S[1]_S)$.
\end{lemma}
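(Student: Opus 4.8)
The plan is to realize $\fL_{\cC_T}$ as a Boolean quotient of $\fL_T$ and to show that the bimodal epimorphism $(\cdot)^*$ descends to this quotient. First I would note that $\cC_T$ extends $T$ (if $T\vdash\alpha$ then $T\vdash[1]_T\alpha$ by the first derivability condition for $[1]_T$, so $\alpha\in\cC_T$), hence $T$-provable equivalence implies $\cC_T$-provable equivalence and there is a canonical surjective Boolean homomorphism $\pi_T\colon\fL_T\to\fL_{\cC_T}$, $[\chi]_{\sim_T}\mapsto[\chi]_{\cC_T}$, where $[\chi]_{\cC_T}$ denotes the class modulo $\cC_T$-provable equivalence; similarly for $\pi_S$. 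I would then define the induced map $(\cdot)^{**}\colon\fL_{\cC_T}\to\fL_{\cC_S}$ by $[\chi]_{\cC_T}\mapsto[\chi^*]_{\cC_S}$, i.e. so that $(\cdot)^{**}\circ\pi_T=\pi_S\circ(\cdot)^*$, and verify that it is the required epimorphism.

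The crux is well-definedness, which comes down to showing that $(\cdot)^*$ sends $\cC_T$-theorems to $\cC_S$-theorems: $\cC_T\vdash\chi\Rightarrow\cC_S\vdash\chi^*$. For this I would first record the characterization $\cC_T\vdash\chi\iff T\vdash[1]_T\chi$. The implication $\Leftarrow$ is the definition of $\cC_T$, and $\Rightarrow$ uses that $[1]_T$ satisfies the same derivability conditions as $\Box_T$: given a $\cC_T$-proof of $\chi$ from finitely many axioms $\psi_0,\dots,\psi_{n-1}\in\cC_T$, we have $T\vdash[1]_T\psi_i$ for each $i$ and $\vdash\bigwedge_{i<n}\psi_i\to\chi$, whence $T\vdash[1]_T\chi$ follows by necessitation, distribution of $[1]_T$ over conjunction, and modus ponens under $[1]_T$. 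Now $T\vdash[1]_T\chi$ is exactly the identity $[1]_T[\chi]_{\sim_T}=\top$ in $\fL_T$; applying the Boolean homomorphism $(\cdot)^*$, which commutes with $[1]$, yields $[1]_S[\chi^*]_{\sim_S}=([1]_T[\chi]_{\sim_T})^*=\top$, that is $S\vdash[1]_S\chi^*$, i.e. $\cC_S\vdash\chi^*$. The same computation shows independence of the chosen representative $\chi^*$ (any two representatives of $[\chi]_{\sim_T}^*$ are $S$-, hence $\cC_S$-provably equivalent), so $(\cdot)^{**}$ is a well-defined Boolean homomorphism; it is surjective because $\pi_S\circ(\cdot)^*=(\cdot)^{**}\circ\pi_T$ is a composite of surjections.

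It then remains to check that $(\cdot)^{**}$ respects the operators, and here I would apply the bimodal homomorphism property twice: $[\Box_T[1]_T\chi]_{\sim_T}^*=\Box_S([1]_T[\chi]_{\sim_T})^*=\Box_S[1]_S[\chi^*]_{\sim_S}$, so that $(\cdot)^{**}(\Box_T[1]_T[\chi]_{\cC_T})=[\Box_S[1]_S\chi^*]_{\cC_S}=\Box_S[1]_S(\cdot)^{**}([\chi]_{\cC_T})$, as required; here I use that $\Box_T[1]_T$ is a genuine operator on $\fL_{\cC_T}$, which is part of the fact (from \cite{KolmBek19}) that $(\fL_{\cC_T},\Box_T[1]_T)$ is the diagonalizable algebra of $\cC_T$. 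I expect the only delicate points to be the equivalence $\cC_T\vdash\chi\iff T\vdash[1]_T\chi$ and the book-keeping required to pass between the arithmetical statement $T\vdash[1]_T\chi$ and the algebraic identity $[1]_T[\chi]_{\sim_T}=\top$ under $(\cdot)^*$; once these are in place, the descent to the quotient, preservation of the operator, and surjectivity are all routine.
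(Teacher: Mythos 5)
Your proof is correct and follows essentially the same route as the paper: realize $\fL_{\cC_T}$ as the quotient of $\fL_T$ by the filter $\cC_T$, use the characterization $\cC_T \vdash \chi \Leftrightarrow T \vdash [1]_T\chi$ together with the bimodal homomorphism property to get well-definedness and surjectivity of the induced map, and check that it commutes with the operator $\Box[1]$. The only difference is that you spell out the deductive-closure step for $\cC_T$ explicitly, which the paper leaves implicit.
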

\begin{proof}
The Lindenbaum algebra $\fL_{\cC_T}$ is (isomorphic to) a quotient 
of $\fL_T$ by the filter $\cC_T$. Firstly, we show that the operation $\Box_T[1]_T$ is well-defined on $\fL_{\cC_T}$.
For all sentences $\phi$ and $\psi$ we have 
$$
\cC_T \vdash \phi \eqv \psi \Rightarrow T \vdash [1]_T \left(\phi \eqv \psi\right) \Rightarrow T \vdash \Box_T[1]_T \phi \eqv \Box_T[1]_T \psi \Rightarrow \cC_T \vdash \Box_T[1]_T \phi \eqv \Box_T[1]_T \psi,
$$
since $T \subseteq \cC_T$. The predicate $\Box_T[1]_T$ is a provability predicate for $\cC_T$, so $(\fL_{\cC_T}, \Box_T[1]_T)$ is a diagonalizable algebra of $\cC_T$, and likewise for $\cC_S$.

Since $(\cdot)^*$ is a homomorphism, for all sentences $\phi$ and $\psi$ we have
$$
\cC_T \vdash \phi \eqv \psi \Rightarrow T \vdash [1]_T
\left(\phi \eqv \psi\right) \Rightarrow S \vdash [1]_S\left(\phi^*
\eqv \psi^*\right) \Rightarrow \cC_S \vdash \phi^* \eqv \psi^*,
$$
so $(\cdot)^*$ is well-defined on $\fL_{\cC_T}$.
Since $S \subseteq \cC_S$ we also have that
$(\cdot)^*$ is an epimorphism from $\fL_{\cC_T}$ onto $\fL_{\cC_S}$, which preserves the composition of boxes $\Box[1]$. 
Thus, we obtain an epimorphism from
$(\fL_{\cC_T}, \Box_T[1]_T)$ onto  $(\fL_{\cC_S}, \Box_S[1]_S)$. 
\end{proof}

\begin{theorem}\label{th:bimodsurj}
Let $T$ and $S$ be such that $\cC_T \vdash \Rfn_{\Sigma_1}(\cC_S)$, provably in $\cC_T$.
Then there are no epimorphisms from $(\fD_T, [1]_T)$ onto $(\fD_S, [1]_S)$.
\end{theorem}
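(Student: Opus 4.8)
The plan is to reduce the statement to Theorem~\ref{th:locrfn} applied to the theories $\cC_T$ and $\cC_S$, whose diagonalizable algebras carry the composite provability predicates $\Box_T[1]_T$ and $\Box_S[1]_S$. Suppose, towards a contradiction, that $(\cdot)^* \colon (\fD_T, [1]_T) \to (\fD_S, [1]_S)$ is an epimorphism. By Lemma~\ref{lm:surjbimod} it induces an epimorphism from $(\fL_{\cC_T}, \Box_T[1]_T)$ onto $(\fL_{\cC_S}, \Box_S[1]_S)$. Recall from the proof of that lemma that $\Box_T[1]_T$ is a provability predicate for $\cC_T$, so the source and target here are genuine diagonalizable algebras of the r.e.\ extensions $\cC_T$ and $\cC_S$ of $\EA$ (which are $\Sigma_1$-sound, as required for the results of Section~4 to apply).

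First I would verify that the non-standard predicate $\Box_T[1]_T \psi = \exists p\, \Prf_T(p, \gn{[1]_T\psi})$ is $\EA$-provably equivalent to a \emph{standard} provability predicate for $\cC_T$, and likewise $\Box_S[1]_S$ for $\cC_S$. This is exactly what the remark after Corollary~\ref{cor:RS2} requires in order to feed a non-standard predicate into the machinery of Section~4. The argument is a direct adaptation of Lemma~\ref{lm:boxkstand}: since $[1]_T$ satisfies the same derivability conditions as $\Box_T$ and distributes over conjunction provably in $\EA$, the formula $\Box_T[1]_T$ plays the role of the two-fold iteration treated there, with the inner box $\Box_T$ replaced by the modality $[1]_T$. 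Concretely, one sets $\sigma_{\cC_T}(x) := \exists p, \theta \leqslant x\,(x = \gn{\num{p} = \num{p} \to \theta} \land \Prf_T(p, \gn{[1]_T\theta}))$ and proves $\EA \vdash \forall \psi\,(\Box_T[1]_T\psi \eqv \Box_{\sigma_{\cC_T}}\psi)$, the nontrivial direction passing through $\Pi_2$-conservativity of $\mathsf{B}\Sigma_1$ over $\EA$ exactly as in Lemma~\ref{lm:boxkstand}.

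Next I would observe that the hypothesis ``$\cC_T \vdash \Rfn_{\Sigma_1}(\cC_S)$, provably in $\cC_T$'' is precisely the premise of Theorem~\ref{th:locrfn} with $\cC_T$ in the role of $T$ and $\cC_S$ in the role of $S$, namely $\cC_T \vdash \forall \sigma \in \Sigma_1\, \Box_{\cC_T}(\Box_{\cC_S}\sigma \to \sigma)$, where by the previous step the predicates $\Box_{\cC_T}, \Box_{\cC_S}$ may be read either as the standard ones or as $\Box_T[1]_T, \Box_S[1]_S$, these being $\EA$-provably equivalent. Theorem~\ref{th:locrfn} then yields that there is no epimorphism from $(\fL_{\cC_T}, \Box_{\cC_T})$ onto $(\fL_{\cC_S}, \Box_{\cC_S})$; by the remark after Corollary~\ref{cor:RS2} the conclusion transfers to the $\EA$-equivalent predicates, so there is no epimorphism from $(\fL_{\cC_T}, \Box_T[1]_T)$ onto $(\fL_{\cC_S}, \Box_S[1]_S)$. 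This contradicts the epimorphism produced in the first step, completing the proof.

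The step I expect to be the main obstacle is the verification that $\Box_T[1]_T$ coincides, up to $\EA$-provable equivalence, with a standard provability predicate for $\cC_T$: one must check that the conjunction-distribution and $\Sigma_1$-collection argument of Lemma~\ref{lm:boxkstand} survives when the inner box is replaced by the $\Sigma_2$ modality $[1]_T$, in particular that the required instances of provable $\Sigma_1$-completeness and the formalized derivability conditions remain available for the composite. Everything else is bookkeeping that tracks which provability predicate each reflection principle refers to.
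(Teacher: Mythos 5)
Your proposal is correct and follows essentially the same route as the paper: reduce via Lemma~\ref{lm:surjbimod} to the diagonalizable algebras $(\fL_{\cC_T}, \Box_T[1]_T)$ and $(\fL_{\cC_S}, \Box_S[1]_S)$, check that these composite predicates are $\EA$-provably equivalent to standard ones by the argument of Lemma~\ref{lm:boxkstand} (using that $[1]_T$ satisfies the derivability conditions provably in $\EA$), and then apply Theorem~\ref{th:locrfn} together with the remark after Corollary~\ref{cor:RS2}. Your write-up actually spells out the standardization step in more detail than the paper does, but the content is identical.
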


\begin{proof}
Lemma \ref{lm:surjbimod} implies that it is sufficient to show that there are no epimorphisms from $(\fL_{\cC_T}, \Box_T[1]_T)$ onto $(\fL_{\cC_S}, \Box_S[1]_S)$, which immediately follows from Theorem \ref{th:locrfn} and the assumption, since the predicate $\Box_T[1]_T$ is $\EA$-provably equivalent to a standard one, which can be proved exactly the same as in Lemma \ref{lm:boxkstand} (since $[1]_T$ also satisfies L{\"o}b's derivability conditions provably in $\EA$), and likewise for $S$.
\end{proof}

\begin{corollary}\label{cor:bimod}
	Let $T$ be $U + \Rfn_{\Sigma_2}(U)$ and $S$ be $U + \Rfn_{\Sigma_1}(U)$ for some theory $U$.
	Then there are no epimorphisms from $(\fD_T, [1]_T)$ onto $(\fD_S, [1]_S)$, while 
	$\fD_T \cong \fD_S$.
\end{corollary}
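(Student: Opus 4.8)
The plan is to obtain both halves from results already established: the isomorphism $\fD_T \cong \fD_S$ from the Shavrukov isomorphism criterion (Theorem \ref{th:shaviso}), and the non-isomorphism of the bimodal expansions from Theorem \ref{th:bimodsurj}. For the latter the only thing to check is its hypothesis $\cC_T \vdash \Rfn_{\Sigma_1}(\cC_S)$, provably in $\cC_T$.

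For the isomorphism I would verify that $T = U + \Rfn_{\Sigma_2}(U)$ and $S = U + \Rfn_{\Sigma_1}(U)$ are $\mathcal{B}(\Sigma_1)$-coherent, exactly as in the example following Proposition \ref{prop:isi1pra}: condition $(i)$ is the $\mathcal{B}(\Sigma_1)$-conservativity of $\Rfn_{\Sigma_2}(U)$ over $U + \Rfn_{\Sigma_1}(U)$ (the general form of \cite[Theorem 6]{Bek05}), and condition $(ii)$, the uniform formalization, follows from closure under the $\Sigma_1$-collection rule (\cite[Lemma 4.1]{Bek98}). Theorem \ref{th:shaviso} then yields $\fD_T \cong \fD_S$.

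For the non-isomorphism the crucial inputs are the descriptions of the filters $\cC_S$ and $\cC_T$ given by \cite[Theorem 1]{KolmBek19}, namely $\cC_S \equiv S + \Rfn(S)$ and $\cC_T \equiv T + \Rfn(T + \Rfn(T))$, together with the fact that these equivalences are themselves formalizable. Granting them, the reflection comparison runs as follows. Since $\Sigma_1 \subseteq \Sigma_2$ we have $S \subseteq T$, so $\EA \vdash \Box_S \phi \to \Box_T \phi$ and hence $\Rfn(T) \vdash \Rfn(S)$; thus $T + \Rfn(T) \vdash S + \Rfn(S) \equiv \cC_S$, i.e. $\cC_S$ is contained in $T + \Rfn(T)$ and $\EA \vdash \Box_{\cC_S}\sigma \to \Box_{T + \Rfn(T)}\sigma$. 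As $\cC_T \equiv T + \Rfn(T + \Rfn(T))$ contains the local reflection schema $\Rfn(T + \Rfn(T))$, combining the last implication with the instances $\Box_{T + \Rfn(T)}\sigma \to \sigma$ gives $\cC_T \vdash \Box_{\cC_S}\sigma \to \sigma$ for all $\sigma \in \Sigma_1$, that is, $\cC_T \vdash \Rfn_{\Sigma_1}(\cC_S)$. An application of Theorem \ref{th:bimodsurj} then completes the argument.

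The delicate point is to carry this last step through in the formalized shape demanded by Theorem \ref{th:bimodsurj} (and, behind it, Theorem \ref{th:locrfn}): $\cC_T \vdash \forall \sigma \in \Sigma_1\, \Box_{\cC_T}(\Box_{\cC_S}\sigma \to \sigma)$, where $\Box_{\cC_T}$ and $\Box_{\cC_S}$ are the concrete predicates $\Box_T[1]_T$ and $\Box_S[1]_S$ of Lemma \ref{lm:surjbimod} (standard up to $\EA$-provable equivalence). This forces one to transfer the deductive equivalences $\cC_S \equiv S + \Rfn(S)$ and $\cC_T \equiv T + \Rfn(T + \Rfn(T))$ to the level of provability predicates, so that $\Box_{\cC_S}$ becomes $\EA$-provably equivalent to $\Box_{S + \Rfn(S)}$ and likewise for $T$; after that the formalized inclusion $\Box_{\cC_S}\sigma \to \Box_{T + \Rfn(T)}\sigma$ is provable under $\Box_{\cC_T}$, while $\Box_{T + \Rfn(T)}\sigma \to \sigma$ are uniformly recognizable axioms of $\cC_T$, so that $\cC_T \vdash \Box_{\cC_T}(\Box_{T + \Rfn(T)}\sigma \to \sigma)$ holds uniformly in $\sigma$ by the first derivability condition. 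Once the filter descriptions are secured for an arbitrary base $U$ the rest is routine, so securing those descriptions is the real content of the proof.
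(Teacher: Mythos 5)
Your isomorphism half matches the paper exactly. The non-isomorphism half, however, has a genuine gap: the filter descriptions you take as your ``crucial inputs'' --- $\cC_S \equiv S + \Rfn(S)$ and $\cC_T \equiv T + \Rfn(T + \Rfn(T))$ --- are \emph{not} what \cite[Theorem 1]{KolmBek19} gives for an arbitrary base theory $U$. They hold for $U = \EA$, but the corollary is stated for arbitrary $U$, and the description of $C_T(T)$ depends essentially on $U$: the paper itself points out immediately after the corollary that for $U = \ISi_1$ one gets $\cC_S \equiv \Rfn(S)_\omega$ and $\cC_T \equiv \Rfn(T)_{\omega+1}$ instead. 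So your reflection-comparison argument, which hinges on $\cC_S$ being axiomatized by a \emph{single} layer of local reflection over $S$, simply does not apply uniformly in $U$; indeed, the whole point of Theorem \ref{th:bimodsurj} and this corollary (as the paper explains after Proposition \ref{prop:isi1pra}) is to cover precisely the cases where $\cC_S$ is generated by iterated reflection and the ``generated by $\Rfn(S)$'' style of argument breaks down. On top of this, you correctly identify that the deductive equivalences would still have to be transferred to $\EA$-provably equivalent provability predicates to feed Theorem \ref{th:bimodsurj}, but you defer that step rather than carry it out --- and it cannot be carried out in the form you need for general $U$.

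The paper's proof avoids the filter descriptions entirely. It verifies the hypothesis of Theorem \ref{th:bimodsurj} by a direct derivation of
$$
\EA \vdash \forall \sigma \in \Sigma_1\, \Box_T[1]_T\left(\Box_S[1]_S\sigma \to \sigma\right),
$$
using two elementary observations: first, $\EA \vdash \forall \phi\left(\Box_S[1]_S\phi \to \Box_U[1]_S\phi\right)$, because a proof of the $\Sigma_2$-sentence $[1]_S\phi$ from $U + \Rfn_{\Sigma_1}(U)$ can be converted into one from $U$ alone (pass to $\RFN_{\Sigma_1}(U)$, i.e.\ $\neg[1]_U\bot$, and note that $[1]_U\bot$ also yields $[1]_S\phi$); second, $\Rfn_{\Sigma_2}(U)$ lets $T$ strip the outer $\Box_U$ from $\Box_U[1]_S\sigma$, after which one concludes $[1]_T(\Box_S[1]_S\sigma \to \sigma)$ by cases on $\Box_S[1]_S\sigma$ using provable $\Sigma_2$-completeness of $[1]_T$. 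This argument is uniform in $U$ and needs nothing from \cite{KolmBek19}. I would suggest reworking your second half along these lines, or at minimum restricting your version to bases $U$ for which the single-layer descriptions are actually available.
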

\begin{proof}
The isomorphism part follows from the fact that, just as in example above, $T$ and $S$ are $\mathcal{B}(\Sigma_1)$-equivalent provably in $\EA$ by \cite[Theorem 6]{Bek05}, whence $\fD_T \cong \fD_S$ by Theorem \ref{th:shaviso}. 

The other part is covered by Theorem \ref{th:bimodsurj} as follows.
We show that
\begin{equation*}\label{eq:ctrfncs}
\EA \vdash \forall \sigma \in \Sigma_1 \left(
\Box_T[1]_T \left(\Box_S[1]_S \sigma \to \sigma \right)
\right).
\tag{$\star$}
\end{equation*} 
Firstly, note that $\EA \vdash \forall \psi \left( \Box_S\psi \to \Box_T\psi\right)$ and
$$
\EA \vdash \forall \phi \left(\Box_S[1]_S\phi \to \Box_U[1]_S\phi\right).
$$
Indeed, formalize the following argument in $\EA$.
Assume $S \vdash [1]_S \phi$, i.e., $U + \Rfn_{\Sigma_1}(U) \vdash [1]_S\phi$. It follows that $U + \RFN_{\Sigma_1}(U) \vdash [1]_S \phi$, so
$U \vdash \neg [1]_U \bot \to [1]_S \phi$. But also $U \vdash [1]_U \bot \to [1]_S \phi$, since $\EA \vdash \forall \psi \left(\Box_U \psi \to \Box_S \psi\right)$. Thus, we have $U \vdash [1]_S$, as required.

Now,  we obtain \eqref{eq:ctrfncs} by formalizing the following argument.
Fix an arbitrary $\Sigma_1$-sentence~$\sigma$. In the following derivations we use the facts proved above and provable $\Sigma_2$-completeness of $[1]_T$,
\begin{align*}
U + \Rfn_{\Sigma_2}(U) \vdash 
\Box_S[1]_S \sigma
&\to \Box_U[1]_S\sigma\\
&\to [1]_S\sigma\\
&\to [1]_T\sigma\\
&\to [1]_T \left(\Box_S[1]_S \sigma \to \sigma\right),\\
\vdash 
\neg \Box_S[1]_S \sigma
&\to [1]_T\neg \Box_S [1]_S\sigma\\
&\to [1]_T \left(\Box_S[1]_S \sigma \to \sigma\right),
\end{align*}
whence $T \vdash [1]_T \left(\Box_S[1]_S \sigma \to \sigma\right)$, as required.
\end{proof}

As was discussed above, besides the example $S = \EA + \Rfn_{\Sigma_1}(\EA)$ and $T = \EA + \Rfn_{\Sigma_2}(\EA)$, where $\cC_S$ is generated by $\Rfn(S)$, Corollary \ref{cor:bimod} applies to the case when $\cC_S$ contains iterated reflection.
For instance, if we take $U = \ISi_1$, we have
$\cC_S \equiv \Rfn(S)_\omega$  and $\cC_T \equiv \Rfn(T)_{\omega + 1}$ (see \cite[Theorem 1]{KolmBek19}).

Finally, let us mention the following possible direction of research. Instead of enriching the signature of diagonalizable algebra we can consider its various substructures. Several fragments of diagonalizable algebra of a given theory have been studied in the literature. One example of that is the work  \cite{LindShav08} of Lindstr{\"o}m and Shavrukov  where the decidability of  $\forall^*\exists^*$-fragment of the lattice of $\Sigma_1$-sentences modulo $T$-provable equivalence is established. Another example is given by the recent study of the so-called relative provability degrees \cite{AndCai15}, where the authors study the properties of the lattice of all true $\Pi_2$-sentences modulo $T$-provable equivalence. Let us note that formulations of many results in \cite{AndCai15} and the methods used to obtain the proofs are computability-theoretic in nature and similar to those used for studying the degree structures in computability theory. This is mainly based on the fact that $\Pi_2$-sentences can be seen as asserting the totality of computable functions and vice versa. As has been noted by Shavrukov, some results of \cite{AndCai15} have quite natural proof-theoretic reformulations and can be established using known results and methods of proof theory and provability logic (e.g., \cite[Theorem 5.1]{AndCai15} is an immediate corollary of the G{\"o}del-Rosser theorem). Several operators acting on this lattice are introduced in this work, namely, skip $\phi \mapsto \Diamond_T \phi$, hop $\phi \mapsto \phi \land \Diamond_T\phi$ and jump $\phi \mapsto \la 1 \ra_T \phi$. Here we highlight one of the open questions formulated in that work (see \cite[Question 10.9]{AndCai15}).

\begin{question}
To what extent, if any, does the structure of the provability degrees depend  on  the  underlying  theory $T$?  Are  these  structures  isomorphic  for  different theories?
\end{question} 

\subsection*{Acknowledgments}
The author is grateful to Lev D.\,Beklemishev for useful discussions and to Vladimir Yu.\,Shavru\-kov for valuable suggestions and comments on an early draft of the paper. 

This work is supported by the Russian Science Foundation, in cooperation with the Austrian Science Fund, under grant RSF--FWF 20-41-05002.

\noindent 
Steklov Mathematical Institute of Russian Academy of Sciences,\\ Gubkina St. 8, 119991, Moscow, Russia\\
{\it E-mail:} \texttt{kolmakov.evgn@gmail.com}

\end{document}